\newtheorem{thm}{Theorem}[section]
\newtheorem{cor}[thm]{Corollary}
\newtheorem{lem}[thm]{Lemma}
\newtheorem{prop}[thm]{Proposition}
\theoremstyle{definition}
\newtheorem{defn}[thm]{Definition}
\theoremstyle{remark}
\newtheorem{rem}[thm]{Remark}
\newtheorem{example}[thm]{Example}
\numberwithin{equation}{section}
\newcommand{\abs}[1]{\left\vert#1\right\vert}
\newcommand{\set}[1]{\left\{#1\right\}}
\newcommand{\R}{\mathbf R}
\newcommand{\eps}{\varepsilon}
\newcommand{\A}{\mathcal{A}}
\newcommand{\F}{\mathbf F}
\newcommand{\End}{\mathrm{End}}
\newcommand{\Z}{\mathbf Z}
\newcommand{\ZZ}{\mathbf Z}
\newcommand{\Q}{\mathbf Q}
\newcommand{\OO}{\mathcal O}
\renewcommand{\O}{\mathcal O}
\newcommand{\id}{\mathit{id}}
\newcommand{\disc}{\mathrm{disc}}
\newcommand{\trace}{\mathrm{Tr}}
\newcommand{\Tr}{\mathrm{Tr}}
\newcommand{\Gal}{\mathrm{Gal}}
\newcommand{\OK}{\mathcal O_K}
\newcommand{\Cl}{\mathrm{Cl}}
\newcommand{\C}{\mathbf C}
\DeclareMathOperator{\Prob}{Prob}
\begin{document}

\title{Isolated Curves for Hyperelliptic Curve Cryptography}%
\author{Wenhan Wang}
\address{Department of Mathematics\\
Box 354350 \\
University of Washington\\
Seattle, WA 98195-4350}%
\email{wangwh@math.washington.edu}%

\thanks{}%
\subjclass{94A60,11T71,14G50}%
\keywords{hyperelliptic curve cryptography; isolated curves; conductor gap; isogeny; endomorphism ring}%

\date{\today}%
\begin{abstract}
We introduce the notion of isolated genus two curves. As there is no known efficient algorithm to explicitly construct isogenies between two genus two curves with large conductor gap, the discrete log problem (DLP) cannot be efficiently carried over from an isolated curve to a large set of isogenous curves. Thus isolated genus two curves might be more secure for DLP based hyperelliptic curve cryptography. We establish results on explicit expressions for the index of an endomorphism ring in the maximal CM order, and give conditions under which the index is a prime number or an almost prime number for three different categories of quartic CM fields. We also derived heuristic asymptotic results on the densities and distributions of isolated genus two curves with CM by any fixed quartic CM field. Computational results, which are also shown for three explicit examples, agree with heuristic prediction with errors within a tolerable range.
\end{abstract}
\maketitle
\section{Introduction and Background}

A genus two hyperelliptic curve defined over a finite field $\F_q$ is a curve that can be written in the Weierstrass form
\begin{equation}
Y^2 + h(X)Y = f(X),
\end{equation}
where $f(x),h(x)\in\F_q[X]$, with $\deg f(X)=5$ or $6$, and $\deg h(x) \leq 2$. In this article we will always assume that all curves considered are non-singular. Note that any genus two curve is always hyperelliptic In many cryptographic aspects, genus two curves behave similarly to elliptic curves, although some differences do exist.

The Jacobian variety of an elliptic curve is naturally isomorphic to the curve itself, i.e., we may define an additive group structure on the curve itself. However, for a genus 2 curve there is no such isomorphism, and we need to work with its Jacobian for cryptography. The Jacobian of a genus two curve $C$ is defined as the quotient of abelian groups $J_C := \mathrm{Div}^0(C)/\mathrm{Prin}(C)$, where $\mathrm{Div}^0(C)$ consists of all degree-zero divisors on $C$, and $\mathrm{Prin}(C)$ is the subgroup of $\mathrm{Div}^0(C)$ consists of all the principal divisors, i.e., divisors generated by a global function on $C$. The Jacobian $J_C$ is also an algebraic variety, and additions on $J_C$ are morphisms of $J_C$ viewed as an algebraic variety.

The security of hyperelliptic curve cryptography on genus 2 curves is based on the discrete log problem (DLP) on a cyclic subgroup of $J_C(\F_q)$. The size of $J_C(\F_p)$ is bounded by the well-known Hasse-Weil bound
\begin{equation}
(\sqrt{q}-1)^4 \leq \abs{J_C(\F_q)} \leq (\sqrt{q}+1)^4.
\end{equation}
To implement a secure cryptosystem, we need a large prime order subgroup of $J_C(\F_q)$ so that $q$ needs to be either a large prime number or a high power of $2$. Since the base field $\F_q$ is very large, there exist a large number of genus 2 curves defined over $\F_q$. There are about $O(q^3)$ non-isomorphic genus two curves defined over $\F_q$. Suppose that the base field is given, we want to find an answer to the question whether all genus 2 curves are equally secure for DLP based cryptography. Before we look for an answer to this question, we may take a look at a similar case for elliptic curves.

Following \cite{Koblitz1}, suppose we are choosing an elliptic curve defined over $\F_p$ to build an elliptic curve cryptosystem. As $p$ is greater than 3, all curves can be written as $y^2 = x^3 + ax + b$, with $a,b\in\F_p$, which gives $O(p^2)$ elliptic curves defined over $\F_p$, but only $O(p)$ non-isomorphic ones. However, different curves are linked by isogenies. By Tate's Theorem, two elliptic curves $E_1$ and $E_2$ are isogenous if and only if they have the same number of points over $\F_p$. Since the length of the Weil interval is $4\sqrt{p}$, there are $O(\sqrt{p})$ isogeny classes defined over $\F_p$. Note that if an isogeny $\phi:E_1\to E_2$ is easy to compute, by which we mean it is less time consuming compared to the known algorithms for solving the discrete log problem, then the discrete log problem on $E_2$ could be reduced to the discrete log problem on $E_1$. This gives us some basic ideas to pick up secure curves for elliptic curve cryptography: (1) We need to know whether it is easy to construct an isogeny from a given curve $E$ to other curves in the same isogeny class. (2) We might want to avoid choosing curves from which isogenies are easy to construct to a large number of other curves, for the reason that if the discrete log problem over any of these curves is solved, then the discrete log problem on $E$ can also be solved, and the cryptosystem is insecure. (3) In that case, our choice of curves should not be random, but quite special.

To answer the first question, in \cite{Koblitz1} the authors introduced the term \emph{conductor gap}. Let us consider an isogeny class of ordinary elliptic curves; it contains an endomorphism class that has CM by the maximal order $\End(E)\cong\OK$ for some elliptic curve $E$ (see \cite{Waterhouse}). To simplify our discussion we make the assumption that $K$ has class number 1, (e.g. $K=\Q[\sqrt{-1}]$). Note that in this case the endomorphism class containing $K$ is actually an isomorphism class, i.e., any curve that has CM by the maximal order $\OK$ is isomorphic to $E$. If $E'$ is another curve, which has CM by an order $\O\subset\OK$, then the complexity to compute the isogeny from $E$ to $E'$ depends on the largest prime divisor of the index $[\OK:\O]$, which is called the \emph{conductor gap}.

For the second question, we might want to choose curves that have CM by the maximal order and have large conductor gaps with any other curves not in the same endomorphism class. Note that the Frobenius endomorphism $\pi$, that sends any point $(x,y)\in E(\F_p)$ to $(x^p,y^p)$, is an endomorphism of $E$. Hence $\End(E)$ contains $\Z[\pi]$, which has index $\sqrt{\frac{\disc(\pi)}{\disc(K)}}$ in $\OK$. Thus if we expect that $E$ has a large conductor gap, it is necessary that there exists a large prime number $l$ that divides $\frac{\disc(\pi)}{\disc(K)}$. Conversely, if there exists a large prime factor $l$ in $\frac{\disc(\pi)}{\disc(K)}$, then we may construct a curve by the CM method \cite{McGuire} that has $l$ as the conductor gap. Hence we reduced the problem to a ring-theoretic problem involving the conductors of all possible orders of $K$.

Here is an example from \cite{Koblitz1}. Suppose we are looking for curves with CM by $K=\Q(\sqrt{-1})$. Choose $B$ to be a random $k$-bit prime, and choose $A$ to be a random even number (perhaps also of $k$ bits, but $A$ may be chosen to have fewer bits) such that (i) $p=A^2+B^2$ is prime, and (ii) either $n=\frac{p+1}{2}-A$ or $n=\frac{p+1}{2}+A$ is a prime. Heuristically one expects to have to test $O(k^2)$ values of $A$ in order to obtain conditions (i) and (ii). Then the curve $E$ over $\F_p$ with equation
\begin{equation}
y^2 = x^3 - ax
\end{equation}
has $2n$ points, where $\alpha\in\F_p$ is a quadratic non-residue whose quartic residue class depends on the sign in $n=\frac{p+1}{2}\mp A$. The trace of $E$ is $\pm 2A$, and its discriminant is $4A^2-4p = -4B^2$. Because $B$ is prime, for $k\geq 80$ it is completely infeasible to transport the discrete log problem on $E$ to that on a generic isogenous curve. Note that $E$ has complex multiplication by the full ring of integers $\Z[\sqrt{-1}]$ (since $\sqrt{-1}$ acts on the curve by $(x,y)\mapsto(-x,iy)$, where $i$ denotes a square root of $-1$ in the finite field); that is, $\End(E)$ has conductor 1. Up to isomorphism $E$ is the only curve in its conductor-gap class, and the endomorphism ring of any of the other isogenous curves has conductor $B$.

The cases are similar in curves with CM by other imaginary quadratic fields. Consider an elliptic curve $E$ with CM by $\OK$, where $K=\Q(\sqrt{-d})$, and $d>0$ is square free. If $\pi\in K$ is a Weil $p$-number, then we have $\pi\bar{\pi}=p$. As $\pi$ can be written as $\pi = A + B\sqrt{-d}$ if $-d\equiv 2,3\mod 4$, and $\pi = A + B\frac{1+\sqrt{-d}}{2}$ if $-d\equiv 1\mod 4$, it follows that
\begin{itemize}
\item If $-d\equiv 1\mod 4$, then $p=A^2+AB+\frac{1+d}{4}B^2$, and the index of $\Z[\pi]$ in $\OK$ is $|B|$. Note that a coarse upper bound for $|B|$ is $2\sqrt{p/d}$
\item If $-d\equiv 2,3\mod 4$, then $p=A^2+dB^2$, and the index of $\Z[\pi]$ in $\OK$ is $|B|$. Note that a coarse upper bound for $|B|$ is $\sqrt{p/d}$.
\end{itemize}

\

\section{Endomorphism Classes of Genus Two Curves over Finite Fields}

Let $k=\F_q$ be the finite field with $q=p^r$ elements. Let $C$ be a non-singular genus two curve defined over $A$, and let $J_C$ be the Jacobian variety of $C$. An endomorphism of $J_C$ is a morphism preserving the abelian group structure. The zero endomorphism is denoted by $0$. Note that the endomorphisms of $J_C$ form a ring, called the ring of endomorphisms of $J_C$, or simply, of $C$. The endomorphism ring of $J_C$ is denoted $\End(J_C)$ or $\End(C)$ if no ambiguity arises.

We say that the curve $C$ has CM by a totally imaginary field $K\subset\C$ if there is an embedding of rings $\iota: K\hookrightarrow\End(C)\otimes\Q$. In our situation, where $C$ is a genus two curve defined over a finite field, there are two cases we need to consider. If the $p$-rank of $C$ is 1 or 2, that is, if $p$ is nonsupsersingular, then $\End(C)\otimes\Q$ is a field of degree 4, and thus the inclusion of fields $\iota$ must be bijective. If the $p$-rank of $C$ is $0$, that is, when $C$ is supersingular, then $\End(C)$ is noncommutative and $\End(C)\otimes\Q$ is a division ring with $K$ contained in its center. In this paper we assume that $C$ is nonsupersingular, that is, $\End(C)\otimes\Q\approx K$ for some quartic CM field $K$. Thus the endomorphism ring $\End(C)$ can be identified with a subring of $K$. Note that $\End(C)$ is a finite rank $\Z$-module, and hence is finitely generated over $\Z$, which shows that $\End(C)$ is integral over $\Z$. Hence $\End(C)$ is contained in the maximal order $\OK$. On the other hand, as the Frobenius endomorphism $\pi\in K$ has minimal polynomial of degree 4, $\Z[\pi,\bar{\pi}]$ is contained in $\End(C)$. This shows that $\End(C)$ has to be an order in $K$, which is summarized as follows.

\begin{prop}
Suppose $C$ is a genus two curve defined over a finite field $\F_p$ with $\End(C)$ identified as a subring in $K$. Then $\Z[\pi,\bar{\pi}]\subseteq\End(C)\subseteq\OK$.
\end{prop}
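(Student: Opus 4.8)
The plan is to establish the two inclusions separately, using only that $\End(C)$ is a subring of $K$ containing the identity endomorphism $\id$. The nonsupersingular hypothesis gives $\End(C)\otimes\Q\approx K$, so throughout I regard $\End(C)$ as a subring of the quartic CM field $K$, and the two containments $\Z[\pi,\bar{\pi}]\subseteq\End(C)$ and $\End(C)\subseteq\OK$ can be treated independently.

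For the inclusion $\End(C)\subseteq\OK$, the key input is that the endomorphism ring of an abelian variety is a finitely generated $\Z$-module, indeed free of finite rank; this is a standard structural fact, and here the rank is at most $4$ since $\End(C)$ embeds in the four-dimensional $\Q$-algebra $K$. A subring of $K$ that is finitely generated as a $\Z$-module consists of elements integral over $\Z$: given $\alpha\in\End(C)$, the subring $\Z[\alpha]\subseteq\End(C)$ is again finitely generated over $\Z$, which forces $\alpha$ to satisfy a monic polynomial with integer coefficients. Hence every element of $\End(C)$ lies in the integral closure of $\Z$ in $K$, which is by definition the maximal order $\OK$, giving $\End(C)\subseteq\OK$.

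For the inclusion $\Z[\pi,\bar{\pi}]\subseteq\End(C)$, I would first note that the Frobenius $\pi$, induced by the $q$-power map on coordinates, is a morphism of $J_C$ preserving the group law, so $\pi\in\End(C)$. The subtler point is that $\bar{\pi}$ is genuinely an endomorphism and not merely an element of $K$. Here I would invoke the canonical principal polarization on $J_C$ and its Rosati involution: the Rosati involution restricts to complex conjugation on the CM field $K$, so it carries $\pi$ to $\bar{\pi}$, and the Rosati image of an endomorphism is again an endomorphism; thus $\bar{\pi}\in\End(C)$. Equivalently, $\bar{\pi}$ is identified with the Verschiebung $V$, characterized by $V\pi=\pi V=[q]$, which is manifestly an endomorphism. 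Since $\End(C)$ is closed under addition and multiplication and contains $\id$, it contains every $\Z$-linear combination of products of $\pi$ and $\bar{\pi}$, that is, the whole subring $\Z[\pi,\bar{\pi}]$.

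The main obstacle is the middle claim that $\bar{\pi}\in\End(C)$: a priori complex conjugation is only an abstract automorphism of $K$, and one must exhibit $\bar{\pi}$ as an actual endomorphism. The polarization/Rosati argument, equivalently the identification of $\bar{\pi}$ with the Verschiebung via $\pi\bar{\pi}=q$, is what resolves this cleanly. The remaining ingredient, finite generation of $\End(C)$ as a $\Z$-module, is a standard fact about abelian varieties requiring no computation here. Combining the two inclusions yields $\Z[\pi,\bar{\pi}]\subseteq\End(C)\subseteq\OK$, as claimed.
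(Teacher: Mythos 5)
Your proof is correct and follows essentially the same route as the paper, which establishes $\End(C)\subseteq\OK$ by exactly your finite-generation/integrality argument and then asserts $\Z[\pi,\bar{\pi}]\subseteq\End(C)$ in the paragraph preceding the proposition. In fact you are more careful than the paper on the one delicate point, namely why $\bar{\pi}$ is a genuine endomorphism rather than just an element of $K$: your identification of $\bar{\pi}$ with the Verschiebung (equivalently the Rosati image of $\pi$) supplies the justification that the paper leaves implicit.
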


The image of the Frobenius endomorphism $\pi$ under the map $\iota$ is an algebraic integer $\pi\in K$. The absolute value of all conjugates of $\pi$ is $\sqrt{p}$. An algebraic integer in $K$ whose conjugates have absolute value $\sqrt{p}$ for some prime number $p$ is called a \emph{Weil} $p$-\emph{number} in $K$.

If $\End(C)=\OK$, we say that $C$ has maximal CM by $K$. Two curves $C_1$ and $C_2$ are said to belong to the same endomorphism class if $\End(C_1)=\End(C_2)$. Any isogeny mapping from a curve to another one in the same endomorphism class is called a horizontal isogeny. On the other hand, if $C_1$ is a curve with maximal CM by $\OK$ and $C_2$ has an order $\OO\subseteq\OK$ as its endomorphism ring, then an isogeny $\phi:J_{C_1}\to J_{C_2}$ is called a vertical isogeny. Note that we have the following result on the degree of a vertical isogeny.

\begin{prop}
Let $C_1$ and $C_2$ be as above. Then for any prime number $l$ dividing the index $[\OK:\OO]$, $l$ also divides $\deg \phi$.
\end{prop}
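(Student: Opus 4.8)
The plan is to reduce the statement to the single ring-theoretic containment $n\,\OK\subseteq\OO$, where $n=\deg\phi$, and then to extract the divisibility from an elementary computation of $\Z$-module indices. Throughout I write $\OK=\End(J_{C_1})$ and $\OO=\End(J_{C_2})$, both identified as orders inside the quartic CM field $K$ via the common embedding $\iota$, so that every endomorphism in sight is simultaneously a morphism of Jacobians and an element of the commutative field $K$.

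First I would bring in the dual isogeny. Since $\ker\phi$ is a finite group scheme of order $n$, it is annihilated by $n$, so $\ker\phi\subseteq J_{C_1}[n]$ and the multiplication map $[n]_{J_{C_1}}$ factors as $[n]=\hat\phi\circ\phi$ for a uniquely determined isogeny $\hat\phi:J_{C_2}\to J_{C_1}$; right-cancelling the epimorphism $\phi$ in $\phi\hat\phi\phi=[n]\phi$ then also yields $\phi\hat\phi=[n]_{J_{C_2}}$. Now for any $\alpha\in\OK$ the composite $\phi\circ\alpha\circ\hat\phi$ is a genuine morphism $J_{C_2}\to J_{C_2}$, hence lies in $\OO$. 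Using $\hat\phi\phi=[n]$ one checks that $\alpha\mapsto\tfrac1n\,\phi\alpha\hat\phi$ is a unital ring homomorphism $\End^0(J_{C_1})\to\End^0(J_{C_2})$: the identity $(\phi\alpha\hat\phi)(\phi\beta\hat\phi)=n\,\phi\alpha\beta\hat\phi$ makes it multiplicative, and $\phi\hat\phi=[n]$ makes it unital. Under the identifications $\End^0(J_{C_i})\cong K$ this map is a field automorphism $\sigma$ of $K$, so that $\phi\alpha\hat\phi=n\,\sigma(\alpha)$. Hence $n\,\sigma(\alpha)\in\OO$ for every $\alpha\in\OK$; since $\sigma$ fixes the maximal order setwise, $\sigma(\OK)=\OK$, and therefore $n\,\OK\subseteq\OO$.

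Finally, from $n\,\OK\subseteq\OO\subseteq\OK$ and the fact that all three are free $\Z$-modules of rank $4$, multiplicativity of the index gives $[\OK:\OO]\,[\OO:n\OK]=[\OK:n\OK]=n^4$, so $[\OK:\OO]$ divides $n^4$. Any prime $l\mid[\OK:\OO]$ then divides $n^4$, and hence divides $n=\deg\phi$, which is the claim. I expect the only delicate point to be the transport step: one must resist asserting that conjugation by $\phi$ is the identity on $K$ — it is only \emph{some} automorphism $\sigma$ — and observe that this causes no harm precisely because $\sigma$ preserves the maximal order $\OK$. The dual-isogeny relations and the concluding index count are routine.
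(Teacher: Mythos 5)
Your proof is correct. The paper states this proposition without any proof at all (it is introduced only with ``Note that we have the following result''), so there is no argument of the author's to compare yours against; what you have written is the standard justification, and it is complete. The key chain --- existence of $\hat\phi$ with $\hat\phi\phi=[n]$ and $\phi\hat\phi=[n]$, the observation that $\phi\alpha\hat\phi\in\End(J_{C_2})$ for $\alpha\in\End(J_{C_1})$, hence $n\,\OK\subseteq\OO\subseteq\OK$, and finally $[\OK:\OO]\mid n^4$ --- is exactly the sandwich argument used by Waterhouse and by Kohel for the analogous elliptic-curve statement. You were right to flag the transport step as the only delicate point: $\alpha\mapsto\tfrac1n\phi\alpha\hat\phi$ need only be \emph{some} automorphism of $K$ relative to the two chosen identifications, and your observation that any automorphism preserves $\OK$ setwise is precisely what makes the containment $n\,\OK\subseteq\OO$ legitimate. (In fact, if one insists that both identifications send the respective Frobenius endomorphisms to the same Weil number $\pi$, then $\sigma$ is the identity, since $\phi$ commutes with Frobenius for an $\F_q$-rational isogeny; but your version avoids having to verify this.) The only ingredient you invoke that deserves a citation rather than the phrase ``one checks'' is the factorization $[n]=\hat\phi\circ\phi$, which in characteristic $p$ with $p\mid n$ rests on the fact that a commutative finite group scheme is killed by its order; this is standard (Mumford, \emph{Abelian Varieties}) and does not affect correctness.
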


As there is no known efficient algorithm to compute vertical isogenies with large prime degree (greater than 80 bits), it is computationally hard to carry the discrete log problem from $C_1$ to $C_2$. Following the above proposition we shall call the largest prime number that divides $\deg\phi$ the conductor gap between $C_1$ and $C_2$. If a genus two curve $C$ has large conductor gap with any curve $C'$ not in the same endomorphism class, then $C$ is said to be \emph{isolated}. In particular, if the class number of $K$ is 1, then the endomorphism class of $C$ consists of only one isomorphism class. In that case $C$ has large conductor gap with any curve $C'$ not in the same isomorphism class, in which case $C$ is said to be \emph{strictly isolated}. Note that $C$ is strictly isolated if and only if $C$ is isolated and the class number of $K$ equals 1.

It is convenient to consider isolated curves with maximal CM, as there are a number of known algorithms to construct such curves. We first note the following result in \cite{Waterhouse}.

\begin{prop}
For an isogeny class of genus two curves with CM by $K$ and Frobenius endomorphism $\pi\in K$, the possible endomorphism rings are precisely those orders in $\OK$ which contain $\pi$ and $\bar{\pi}$.
\end{prop}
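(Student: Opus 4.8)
The plan is to prove the two inclusions separately. The ``only if'' direction is immediate from the previous proposition: any curve $C$ in the isogeny class has $\End(C)$ identified with an order in $K$ satisfying $\Z[\pi,\bar{\pi}]\subseteq\End(C)\subseteq\OK$, so in particular $\End(C)$ contains $\pi$ and $\bar{\pi}$ and is one of the orders described. The substance of the proposition is therefore the converse: every order $\OO$ with $\Z[\pi,\bar{\pi}]\subseteq\OO\subseteq\OK$ is actually realized as $\End(C)$ for some curve (equivalently, some abelian surface) in the class.

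For the converse I would pass from curves to their Jacobians and exploit the dictionary, due to Tate and (in the ordinary case) Deligne and developed in \cite{Waterhouse}, between abelian varieties in a fixed nonsupersingular isogeny class and $\Z[\pi,\bar{\pi}]$-lattices in $K$. Concretely, an abelian surface $A$ isogenous to $J_C$ corresponds to a full $\Z[\pi,\bar{\pi}]$-lattice (fractional ideal) $\mathfrak{a}\subseteq K$ on which $\pi$ acts as Frobenius; isogenies correspond to $K$-linear maps with finite cokernel; and, decisively, the endomorphism ring $\End(A)$ is identified with the multiplier ring $\{x\in K : x\mathfrak{a}\subseteq\mathfrak{a}\}$ of $\mathfrak{a}$. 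The first step is thus to record this correspondence and, in particular, the identification of $\End(A)$ with the multiplier ring of the associated lattice.

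The second step is the key computation. Given a target order $\OO$ with $\Z[\pi,\bar{\pi}]\subseteq\OO\subseteq\OK$, take $\mathfrak{a}=\OO$ itself, viewed as a $\Z[\pi,\bar{\pi}]$-module through the inclusion $\Z[\pi,\bar{\pi}]\subseteq\OO$; this is a full lattice in $K$, hence a legitimate object of the equivalence. Its multiplier ring is exactly $\OO$: since $\OO$ is a ring we have $\OO\cdot\OO=\OO$, so $\OO\subseteq\{x\in K:x\OO\subseteq\OO\}$, while conversely any $x$ with $x\OO\subseteq\OO$ satisfies $x=x\cdot 1\in\OO$ because $1\in\OO$. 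Feeding $\mathfrak{a}=\OO$ back through the correspondence therefore produces an abelian surface $A$ in the isogeny class with $\End(A)\cong\OO$, which shows that $\OO$ occurs.

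The step I expect to be the main obstacle is realizing $A$ as the Jacobian of a genus two curve rather than merely as an abelian surface: one must equip the lattice $\OO$ with a Riemann form inducing a principal polarization, and then invoke the fact that an \emph{indecomposable} principally polarized abelian surface is the Jacobian of a genus two curve, the decomposable case (a product of elliptic curves) having to be excluded. Controlling the existence of a principal polarization whose associated surface is indecomposable, rather than merely the existence of $A$, is the delicate point, and it is exactly what the CM construction of \cite{McGuire} supplies. (A secondary technical caveat is that the module equivalence is cleanest in the ordinary, $p$-rank $2$, case; the $p$-rank $1$ case requires a more careful version of the correspondence, but the assumption that $C$ is nonsupersingular keeps the endomorphism algebra commutative and the lattice theory over $K$ available throughout.)
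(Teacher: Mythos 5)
The paper does not actually prove this proposition; it is quoted as a result of \cite{Waterhouse}, so there is no internal argument to compare yours against. What you have written is, in substance, a correct reconstruction of Waterhouse's proof: the easy inclusion follows from $\Z[\pi,\bar{\pi}]\subseteq\End(C)\subseteq\OK$, and the converse is obtained from the Tate--Deligne--Waterhouse equivalence between abelian varieties in a nonsupersingular isogeny class and full $\Z[\pi,\bar{\pi}]$-lattices in $K$, under which $\End(A)$ becomes the multiplier ring $\set{x\in K: x\aaa\subseteq\aaa}$; taking $\aaa=\OO$ and noting that the multiplier ring of an order is the order itself realizes every intermediate order. This is exactly the mechanism behind the cited theorem, so at the level of abelian surfaces your argument is complete and matches the source.

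The one point where a genuine gap remains is the one you flag yourself: the proposition, as stated, is about genus two \emph{curves}, and your construction only produces an abelian surface with the prescribed endomorphism ring. Saying that the indecomposable principal polarization is ``exactly what the CM construction of \cite{McGuire} supplies'' does not close this: it is known (see \cite{Maisner}, which appears in the paper's own bibliography) that not every isogeny class of abelian surfaces over $\F_q$ contains a Jacobian, so the existence of a principally polarized indecomposable member in each endomorphism class is a separate nontrivial question, not a formal consequence of the lattice correspondence. To be fair, the paper inherits the same imprecision by citing Waterhouse (whose theorem concerns abelian varieties) for a statement about curves, so your proof is no weaker than the paper's own justification --- but if the statement is to be proved literally as written, the polarization and indecomposability step must be supplied rather than named.
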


We have immediately the following

\begin{cor}
Suppose $C(\F_p)$ is a genus two curve with CM by $K$ with Frobenius endomorphism $\pi\in K$. If $[\OK:\Z[\pi,\bar{\pi}]]=l$ is a large prime number, then $C$ is isolated.
\end{cor}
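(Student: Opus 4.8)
The plan is to exploit the primality of $l$ to pin down the entire lattice of orders containing $\pi$ and $\bar{\pi}$, and then to feed this into the two structural propositions proved above (the one bounding the degree of a vertical isogeny, and the Waterhouse result) so as to bound every relevant conductor gap below by $l$.

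First I would record the elementary lattice-theoretic fact underlying everything. If $\OO$ is any order with $\Z[\pi,\bar{\pi}]\subseteq\OO\subseteq\OK$, then the tower law for module indices gives $[\OK:\OO]\cdot[\OO:\Z[\pi,\bar{\pi}]]=[\OK:\Z[\pi,\bar{\pi}]]=l$. Since $l$ is prime, one of the two factors must equal $1$, so $\OO$ is either $\OK$ or $\Z[\pi,\bar{\pi}]$. In other words, there are exactly two orders sandwiched between $\Z[\pi,\bar{\pi}]$ and $\OK$, and they differ by the single prime index $l$.

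Next I would invoke the Waterhouse proposition: the possible endomorphism rings of curves in the isogeny class of $C$ are precisely the orders containing $\pi$ and $\bar{\pi}$, which by the previous paragraph are just $\OK$ and $\Z[\pi,\bar{\pi}]$. Hence the isogeny class of $C$ splits into at most two endomorphism classes, one with maximal CM and one with CM by $\Z[\pi,\bar{\pi}]$. By the proposition asserting $\Z[\pi,\bar{\pi}]\subseteq\End(C)\subseteq\OK$, the ring $\End(C)$ is one of these two orders. Now let $C'$ be any curve isogenous to $C$ but in a different endomorphism class; then $\set{\End(C),\End(C')}=\set{\OK,\Z[\pi,\bar{\pi}]}$, so the index of the smaller of the two orders in $\OK$ equals exactly $l$. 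Applying the proposition on the degree of a vertical isogeny to the vertical isogeny joining the maximal-CM curve of the pair to the non-maximal one, we conclude that $l$ divides its degree, whence the conductor gap between $C$ and $C'$ is at least $l$. As $l$ is large by hypothesis, $C$ has large conductor gap with every isogenous curve outside its endomorphism class, i.e. $C$ is isolated.

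The step that requires the most care is the last one: one must notice that $C$ need not itself carry the maximal order, so the single vertical isogeny whose degree the earlier proposition controls may run from $C'$ into $C$ rather than from $C$ outward. Because only one nontrivial index $l$ is ever in play, however, the conductor gap is symmetric in the pair $\set{C,C'}$, and the lower bound $l\mid\deg\phi$ holds regardless of which of $C,C'$ is the maximal-CM curve. This observation is what lets the conclusion hold for $C$ without assuming $\End(C)=\OK$.
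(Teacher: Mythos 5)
Your proof is correct and follows essentially the same route as the paper: primality of $[\OK:\Z[\pi,\bar{\pi}]]$ forces every intermediate order to be $\OK$ or $\Z[\pi,\bar{\pi}]$, Waterhouse's result then yields only two possible endomorphism classes, and the conductor gap between them is the large prime $l$. Your closing observation that the argument survives even when $\End(C)$ is not the maximal order is a small but worthwhile refinement of the paper's version, which tacitly assumes $\End(C)=\OK$.
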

\begin{proof}
Since $[\OK:\Z[\pi,\bar{\pi}]]=l$ is a prime number, we know that for any intermediate order $\OO$, we must have $\OO=\OK$ or $\OO = \Z[\pi,\bar{\pi}]$, which shows that $\OK$ and $\Z[\pi,\bar{\pi}]$ are the only two possible endomorphism rings. Hence for any curve $C'$ not in the same endomorphism class, $C'$ must have $\Z[\pi,\bar{\pi}]$ as its endomorphism ring. As the index of $\Z[\pi,\bar{\pi}]$ in $\OK$ is a large prime number, we deduce that $C$ is isolated.
\end{proof}

To determine whether an isolated curve $C$ is strictly isolated, we need information on the size of the endomorphism class containing $C$. The following proposition in \cite{Waterhouse} answers this question.

\begin{prop}
The endomorphism class of genus two curves with CM by an order $\OO\subset K$ contains exactly $\#\Cl(\OO)$ isomorphism classes. In particular, the endomorphism class of genus two curves with maximal CM by $K$ contains exactly $\Cl(K)$ isomorphism classes.
\end{prop}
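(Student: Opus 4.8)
The plan is to realize the endomorphism class as a principal homogeneous space under the ideal class group $\Cl(\OO)$, following the classification of (ordinary) abelian varieties over finite fields due to Deligne, Tate, and Waterhouse. Since we have assumed $C$ is nonsupersingular, so that $\End(C)\otimes\Q\isom K$ is commutative, the Jacobian $J_C$ lies in a CM isogeny class, and Deligne's equivalence attaches to each abelian surface in the isogeny class a lattice in $K$, i.e.\ a fractional $\OO$-ideal, on which the Frobenius acts as multiplication by $\pi$. First I would fix a base point: choose one abelian surface $A_0$ in the endomorphism class with $\End(A_0)=\OO$, corresponding to the lattice $\OO\subset K$ itself.

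Next I would translate the geometric objects into ideal-theoretic ones. Under the equivalence, an abelian surface $A$ isogenous to $A_0$ corresponds to a fractional $\OO$-ideal $\mathfrak{a}\subset K$, well defined up to homothety by $K^{\times}$, and an isomorphism $A\isom A'$ corresponds to an equality $\mathfrak{a}'=\lambda\mathfrak{a}$ for some $\lambda\in K^{\times}$. The key bookkeeping step is to identify the endomorphism ring of $A$ with the multiplier ring $\{x\in K : x\mathfrak{a}\subseteq\mathfrak{a}\}$ of the associated ideal; by the earlier propositions this ring is automatically an order containing $\Z[\pi,\bar{\pi}]$. Thus the abelian surfaces in the isogeny class whose endomorphism ring is exactly $\OO$ correspond precisely to the homothety classes of fractional ideals whose multiplier ring is $\OO$, and counting isomorphism classes in the endomorphism class of $\OO$ reduces to counting such ideal classes.

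Finally I would exhibit $\Cl(\OO)$ acting on this set by $(\mathfrak{c},\mathfrak{a})\mapsto\mathfrak{c}\mathfrak{a}$, noting that an invertible ideal $\mathfrak{c}$ preserves the multiplier ring $\OO$, and check that the action is simply transitive. Freeness amounts to the statement that $\mathfrak{c}\mathfrak{a}=\lambda\mathfrak{a}$ forces $\mathfrak{c}=(\lambda)$ to be principal, which follows from invertibility of $\mathfrak{c}$; transitivity amounts to the statement that any two fractional ideals with multiplier ring $\OO$ differ by an invertible ideal. Fixing the base point $\mathfrak{a}=\OO$ then yields the bijection between isomorphism classes and $\Cl(\OO)$, and the maximal case $\OO=\OK$ follows at once, since every fractional $\OK$-ideal is invertible and $\Cl(\OK)=\Cl(K)$.

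I expect the transitivity of the class-group action to be the main obstacle. It hinges on the fact that, over a finite field, the Frobenius lattice attached to an abelian surface with $\End=\OO$ is not merely a proper $\OO$-module but an invertible (kernel) ideal, so that the distinction between proper and invertible ideals---invisible for orders in imaginary quadratic fields but genuine for orders in a quartic CM field---does not corrupt the count. Establishing that the relevant lattices are invertible, and hence that $\Cl(\OO)$ rather than the larger set of proper ideal classes governs the enumeration, is the delicate input supplied by Waterhouse's analysis, and is where I would concentrate the argument.
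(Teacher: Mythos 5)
The paper offers no argument for this proposition at all---it is quoted as a result of Waterhouse---so there is no in-text proof to measure you against. Your outline follows the standard Deligne--Tate--Waterhouse route: isogeny class $\leftrightarrow$ lattices in $K$ up to homothety, endomorphism ring $\leftrightarrow$ multiplier ring, isomorphism $\leftrightarrow$ homothety, and then a $\Cl(\OO)$-action on the classes with multiplier ring $\OO$. The bookkeeping in your first two paragraphs is right, and your freeness argument is sound: if $\mathfrak c\aaa=\lambda\aaa$ with $\mathfrak c$ invertible, then $\lambda^{-1}\mathfrak c$ and $\lambda\mathfrak c^{-1}$ both land in the multiplier ring $\OO$, and multiplying them shows $\mathfrak c=\lambda\OO$.

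The gap is exactly where you put it, but you should be aware that it cannot be closed in the generality in which the proposition is stated. Transitivity requires that every lattice whose multiplier ring is exactly $\OO$ be an invertible $\OO$-ideal (equivalently, in Waterhouse's language, that every ideal of $\OO$ be a kernel ideal). For orders in imaginary quadratic fields proper ideals are automatically invertible, which is why the elliptic-curve analogue is clean; for a non-maximal order $\OO$ with $\Z[\pi,\bar{\pi}]\subseteq\OO\subsetneq\OK$ in a quartic CM field this fails in general, and Waterhouse only proves that $\Cl(\OO)$ acts freely and transitively on the subset of classes coming from kernel ideals. Consequently the set of isomorphism classes with $\End=\OO$ is a disjoint union of $\Cl(\OO)$-orbits and its cardinality can exceed $\#\Cl(\OO)$; the proposition as written is unconditional only in the case the paper actually uses, namely $\OO=\OK$, where every fractional ideal is invertible and your argument closes completely (giving the ``in particular'' clause and the $h(K)=1$ corollary). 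So your plan proves the part of the statement that matters downstream, but the sentence promising to extract invertibility of arbitrary proper $\OO$-lattices from ``Waterhouse's analysis'' is promising something that analysis does not deliver, and the first sentence of the proposition should either be restricted to invertible-ideal classes or reinterpreted as a lower bound.
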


The above proposition then immediately gives rise to the following

\begin{cor}
Suppose $C(\F_p)$ is a genus two curve with CM by $K$ with $h(K)=1$. If $[\OK:\Z[\pi,\bar{\pi}]]=l$ is a large prime number, then $C$ is strictly isolated.
\end{cor}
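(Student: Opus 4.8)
The plan is to obtain strict isolatedness by assembling two results already in hand: the isolatedness of $C$ from the preceding corollary, and the count of isomorphism classes in an endomorphism class from the proposition on class numbers. There is essentially no computation involved; the content lies entirely in lining up the hypotheses correctly.

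First I would apply the preceding corollary verbatim: since $[\OK:\Z[\pi,\bar\pi]]=l$ is a large prime, $C$ is isolated, so $C$ has large conductor gap with every curve not lying in its own endomorphism class. Second, I would invoke the proposition asserting that the endomorphism class of curves with maximal CM by $K$ contains exactly $\#\Cl(K)$ isomorphism classes. Feeding in the hypothesis $h(K)=\#\Cl(K)=1$, this endomorphism class collapses to the single isomorphism class of $C$ itself. Combining the two steps, any curve $C'$ not isomorphic to $C$ must then lie in a \emph{different} endomorphism class from $C$, whence the first step forces $C'$ to have conductor gap $l$ with $C$. As $l$ is a large prime, $C$ therefore has large conductor gap with every curve outside its isomorphism class, which is precisely the definition of $C$ being strictly isolated. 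Alternatively, the same conclusion can be read off directly from the recorded criterion that $C$ is strictly isolated if and only if it is isolated and $h(K)=1$.

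The only delicate point --- and the step I would treat as the main obstacle --- is that the proposition counting isomorphism classes is sensitive to \emph{which} order serves as the endomorphism ring. Because $l$ is prime, the proposition on the possible endomorphism rings leaves only $\OK$ and $\Z[\pi,\bar\pi]$, and the clean count $\#\Cl(K)=1$ is available only for the representative with $\End(C)=\OK$. If one instead allowed $\End(C)=\Z[\pi,\bar\pi]$, the relevant count would be $\#\Cl(\Z[\pi,\bar\pi])$, which may exceed $1$ even when $h(K)=1$; such a $C$ would remain isolated but need not be strictly isolated. I would therefore make explicit that the maximal-CM representative is intended, after which the argument above goes through without further work.
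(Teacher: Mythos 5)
Your proposal is correct and matches the paper's (unstated, ``immediate'') argument: combine the preceding corollary giving isolatedness with the proposition that the maximal-CM endomorphism class has exactly $h(K)=1$ isomorphism classes, or equivalently invoke the paper's stated criterion that strictly isolated means isolated together with $h(K)=1$. Your remark that the clean count requires the representative with $\End(C)=\OK$ is a sensible clarification of a point the paper leaves implicit, but it does not change the route.
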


\begin{rem}
It is worth noting that other endomorphism classes (other than the one with maximal CM) generally consist of a large number of isomorphism classes, even if $h(K)=1$. We recall the following formula in \cite{Cox} for the ideal class number $\Cl(\OO)$ of a general order
\begin{equation}
\#\Cl(\OO) = \frac{h(K)}{[\OK^{\ast}:\OO^{\ast}]}\frac{\#(\OK/\mathfrak f)^{\ast}}{\#(\OO/\mathfrak f)^{\ast}},
\end{equation}
where $\mathfrak f = \set{x\in\OK|x\OK\subseteq\OO}$ is the largest ideal of $\OK$ which is contained in $\OO$. $\mathfrak f$ is called the conductor of the order $\OO$ in $K$. Roughly speaking, the number of isomorphism classes contained in an endomorphism class is proportional to both $h(K)$ and the index $[\OK:\OO]$. If the index $[\OK:\OO]$ is very large, then there are many isomorphism classes with endomorphism ring $\OO$. Moreover, the endomorphism class with endomorphism ring $\Z[\pi,\bar{\pi}]$ consists of the largest number of isomorphism classes. In other words, smaller endomorphism rings correspond to larger endomorphism classes. If we arrange all the isomorphism classes in an isogeny class according to the partial ordering by inclusion of endomorphism rings (with $\OK$ at the top), then lower endomorphism classes consist of more isomorphism classes. This seems to be a volcano-like structure. The graph whose vertices are formed by isomorphism classes and edges correspond to the isogenies between them is called an isogeny volcano.
\end{rem}

We shall also take into account curves with maximal CM by $K$ where $[\OK:\Z[\pi,\bar{\pi}]]$ is not a prime number but an almost prime number. That is, $[\OK:\Z[\pi,\bar{\pi}]]=lm$ where $l$ is a large prime number and $m$ is a smooth small integer. We assert that in this case any curve $C$ with maximal CM by $K$ has large conductor gap between curves with endomorphism ring $\OO$ such that $l\mid[\OK:\OO]$, or equivalently, $[\OO:\Z[\pi,\bar{\pi}]]\mid m$. Since $m$ is small, there are then only a small number of endomorphism classes that do not have large conductor gap with $C$. Moreover, these endomorphism classes consist of relatively smaller numbers of isomorphism classes. Hence $C$ has large conductor gap with most isomorphism classes in the isogeny class. It is reasonable to call $C$ an \emph{almost isolated} curve.

In the following sections we shall discuss explicitly when $[\OK:\Z[\pi,\bar{\pi}]]$ is a prime number (or an almost prime number) for different kinds of quartic CM fields, and thus give conditions for an endomorphism class with maximal CM by $K$ to be almost isolated, isolated or strictly isolated.

\

\section{Cyclic Extensions with Explicit Integral Basis}

\subsection{The curve $y^2 = x^5 + 1$}
To initiate our discussion of the conductor gap of genus two curves, we would like to first seek a close analogy with the elliptic curve $y^2=x^3+1$, the example given in \cite{Koblitz1}. Note that the elliptic curve $y^2=x^3+1$ has the maximal number of automorphisms, so we shall look for genus two curves with a similar property, i.e., a large automorphism group. The curve $y^2=x^5+1$, $y^2=x^6+1$, $y^2=x^6+x$ are the three genus two curves which look similar to the elliptic curve $y^2=x^3+1$. However, $y^2=x^6+1$ is not simple, which means that its Jacobian is isogenous to a product of two elliptic curves. The curve $y^2=x^6+x$ is isomorphic to the curve $y^2=x^5+1$. Hence we shall first study the curve $C: y^2=x^5+1$.

First notice that if $p$ is congruent to 1 modulo 5, then there exists a primitive fifth root of unity in $\F_p$, which we denote $\overline{\zeta}_5\in\F_p$. Then the map $(x,y)\mapsto (\overline{\zeta}_5x,y)$ induces an automorphism on the Jacobian of $C(\F_p)$. This shows that the endomorphism ring of $J_C$ contains $\Z[\zeta_5]$, where $\zeta_5 = e^{2\pi i/5}$. However, as $K=\Q(\zeta_5)$ is a quartic CM field with $\OK = \Z[\pi,\bar{\pi}]$, we conclude the following

\begin{prop}
If $p\equiv 1\mod{5}$ is a prime number, then the endomorphism ring of $C(\F_p)$ is isomorphic to $\Z[\zeta_5]$. That is, $C$ has maximal CM by $K=\Q(\zeta_5)$. If $p$ is congruent to 2,3, or 4 modulo 5, then $C$ is supersingular.
\end{prop}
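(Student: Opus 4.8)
The plan is to treat the two congruence conditions separately, using the explicit order-$5$ automorphism $\sigma\colon(x,y)\mapsto(\bar\zeta_5 x,y)$ as the source of complex multiplication and tracking how it interacts with the Frobenius $\pi$.

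For $p\equiv 1\bmod 5$ I would first argue that $\sigma$ generates a copy of $\Z[\zeta_5]$ inside $\End(J_C)$. Since $\sigma$ acts on the two differentials $dx/y,\,x\,dx/y$ with distinct eigenvalues $\bar\zeta_5$ and $\bar\zeta_5^{\,2}$, it is a semisimple endomorphism of order $5$ whose eigenvalues on the rational homology are all the primitive fifth roots of unity, so its minimal polynomial is $\Phi_5(T)=T^4+T^3+T^2+T+1$ and $\Z[\sigma]\cong\Z[T]/(\Phi_5)=\Z[\zeta_5]$; because $\bar\zeta_5\in\F_p$ this subring is already defined over $\F_p$. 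The key elementary fact is that for the cyclotomic field $K=\Q(\zeta_5)$ the maximal order is exactly $\OK=\Z[\zeta_5]$. The inclusion $\Z[\zeta_5]\hookrightarrow\End^0(J_C)$ then forces $\End^0(J_C)$ to contain the degree-four field $K$; since $\pi$ commutes with $\sigma$ in this case (the $p\equiv 1$ instance of the relation computed below) and $K$ is its own centralizer, $\End^0(J_C)=K$ is commutative, so $C$ is nonsupersingular and the Proposition of Section~2 gives $\Z[\pi,\bar\pi]\subseteq\End(C)\subseteq\OK$. Combining this with $\Z[\zeta_5]\subseteq\End(C)$ and $\OK=\Z[\zeta_5]$ sandwiches $\End(C)$ between $\Z[\zeta_5]$ and $\Z[\zeta_5]$, yielding $\End(C)=\OK$, i.e.\ maximal CM by $K$.

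For $p\equiv 2,3,4\bmod 5$ I would exhibit noncommutativity of the geometric endomorphism algebra directly. Regarding both the $p$-power Frobenius $\pi$ and the automorphism $\sigma$ as elements of $\End(J_{C,\overline{\F}_p})$, a computation on points gives $\pi\sigma(x,y)=(\bar\zeta_5^{\,p}x^p,y^p)$ while $\sigma\pi(x,y)=(\bar\zeta_5 x^p,y^p)$, so that $\pi\sigma\pi^{-1}=\sigma^{p}$ in $\End^0(J_{C,\overline{\F}_p})$, where $\sigma$ corresponds to $\zeta_5$ and $\sigma^p$ to $\zeta_5^{\,p}$. When $p\not\equiv 1\bmod 5$ we have $\zeta_5^{\,p}\neq\zeta_5$, so $\pi$ and $\sigma$ do not commute and $\End^0(J_{C,\overline{\F}_p})$ is noncommutative. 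By the dichotomy recorded in Section~2, a nonsupersingular abelian surface has commutative (degree-four field) endomorphism algebra; hence $J_C$ must be supersingular, as claimed.

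The genuinely delicate point is this second part. One must be careful that $\pi$, although it is the relative Frobenius over $\F_p$, is a bona fide endomorphism of $J_{C,\overline{\F}_p}$ (it is, since $C$ is defined over $\F_p$), and that one does not inadvertently pass to an extension $\F_{p^n}\ni\bar\zeta_5$ over which $\pi^n$ commutes with $\sigma$ and destroys the relation. Invoking the commutative-versus-noncommutative dichotomy geometrically also requires knowing that supersingularity is insensitive to the base field and that, for abelian surfaces, $p$-rank zero is equivalent to supersingularity; both are standard and should be cited. An alternative route that avoids the dichotomy is to read off the Frobenius slopes from the Shimura--Taniyama formula for the CM type $\Phi=\{1,2\}$ attached to $dx/y,\,x\,dx/y$: when $p\not\equiv 1\bmod 5$ the decomposition group $\langle p\bmod 5\rangle\neq\{1\}$, each prime of $K$ above $p$ meets $\Phi$ in exactly half of its embeddings, every slope equals $1/2$, and supersingularity follows. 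The eigenvalue and minimal-polynomial verifications for $\sigma$ are routine and I would compress them to a single line.
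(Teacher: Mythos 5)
Your proposal is correct, and for the case $p\equiv 1\pmod 5$ it follows essentially the same route as the paper: the order-$5$ automorphism gives $\Z[\zeta_5]\subseteq\End(J_C)$, and since $\OK=\Z[\zeta_5]$ for $K=\Q(\zeta_5)$ the sandwich $\Z[\zeta_5]\subseteq\End(C)\subseteq\OK$ closes immediately (the paper's phrase ``$\OK=\Z[\pi,\bar\pi]$'' in the preceding paragraph is evidently a slip for $\OK=\Z[\zeta_5]$). Where you genuinely add something is the second half: the paper asserts supersingularity for $p\equiv 2,3,4\pmod 5$ with no argument at all, whereas you derive the conjugation relation $\pi\sigma=\sigma^{p}\pi$ and conclude noncommutativity of $\End^0(J_{C,\overline{\F}_p})$. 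One caution there: for an abelian surface, noncommutativity of the geometric endomorphism algebra does not by itself force supersingularity --- $E_1\times E_2$ with $E_i$ ordinary and isogenous has $\End^0\cong M_2(K)$ for $K$ imaginary quadratic. To close this you must also use that $\Q(\zeta_5)$ embeds in $\End^0(J_{C,\overline{\F}_p})$ and that its unique quadratic subfield $\Q(\sqrt5)$ is real, which rules out every non-supersingular decomposition type and leaves only $\End^0\cong M_2(D_{p,\infty})$; alternatively your Shimura--Taniyama slope computation settles it directly and is the cleaner of your two routes. Your remark about not passing to $\F_{p^n}$ (where $\pi^n$ centralizes $\sigma$) is exactly the right thing to watch. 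Your first-paragraph step ``$K$ is its own centralizer, hence $\End^0(J_C)=K$'' is slightly circular as written (it presumes you already know the ambient algebra well enough to apply the double centralizer theorem), but the conclusion is correct and can be repaired by the same local embedding criterion: for $p\equiv1\pmod5$ the field $\Q(\zeta_5)$ does not split $D_{p,\infty}$, so the supersingular alternative is impossible and the paper's dichotomy gives $\End^0=K$.
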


Note that the class number $h(K)=1$ for $K=\Q(\zeta_5)$. Hence the endomorphism class containing $C$ consists of only one isomorphism class. Thus $C$ is isolated if and only if it is strictly isolated.

We may pose the following question. For which primes $p\equiv 1\mod{5}$ is the curve $C(\F_p)$ isolated? To answer this question we need to first determine when the index $[\OK:\Z[\pi,\bar{\pi}]]$ is a prime number. Note that this requires the computation of the discriminant of $\pi$.

\begin{lem}
Suppose $\pi$ is a Weil $p$-number in a quartic CM field $K$. If $\bar{\pi}\not\in\Z[\pi]$, then $[\Z[\pi,\bar{\pi}]:\Z[\pi]]=p$.
\end{lem}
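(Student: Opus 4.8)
The plan is to show that the hypothesis $\bar\pi\notin\Z[\pi]$ forces $\pi$ to generate $K$ over $\Q$ (so its minimal polynomial has degree $4$), to produce an explicit $\Z$-basis of $\Z[\pi,\bar\pi]$ coming from the real quadratic subfield, and then to read off the index $[\Z[\pi,\bar\pi]:\Z[\pi]]=p$ from a single $4\times4$ determinant in which the Weil relation $\pi\bar\pi=p$ contributes exactly one factor of $p$.

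First I would dispose of the degenerate case. Since $\pi$ is a Weil $p$-number, $\Q(\pi)$ is a CM field contained in $K$, so $[\Q(\pi):\Q]\in\{2,4\}$. If $[\Q(\pi):\Q]=2$, then $\pi$ satisfies a monic integral quadratic and $\bar\pi=\Tr_{\Q(\pi)/\Q}(\pi)-\pi\in\Z[\pi]$, contradicting the hypothesis. Hence $\pi$ has degree $4$, so $K=\Q(\pi)$ and $\{1,\pi,\pi^2,\pi^3\}$ is a $\Z$-basis of the rank-four order $\Z[\pi]$. Throughout I would use the Weil relation $\pi\bar\pi=p$ (the totally positive element $\pi\bar\pi$ has absolute value $p$ under every embedding, hence equals $p$), which also gives $\bar\pi=p/\pi$.

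Next I would pass to the real quadratic subfield $K_0=K^{+}$. Set $s=\pi+\bar\pi$; this is fixed by complex conjugation, hence lies in $K_0$, and it cannot lie in $\Q$ (otherwise $\pi$ would satisfy $x^2-sx+p$ over $\Q$ and have degree $\le2$), so $\Q(s)=K_0$ and $s$ is an algebraic integer of degree two, say $s^2=as-n$ with $a,n\in\Z$. Because $\bar\pi=s-\pi$, I have $\Z[\pi,\bar\pi]=\Z[\pi,s]$, and since $\pi^2=s\pi-p$ expresses $\pi^2$ over $\Z[s]$, the ring $\Z[\pi,s]$ is free of rank two over $\Z[s]$ on $\{1,\pi\}$, hence free of rank four over $\Z$ on $\{1,\pi,s,s\pi\}$. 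Using $s\pi=\pi^2+p$ I can trade $s\pi$ for $\pi^2$, obtaining the clean $\Z$-basis $\{1,\pi,\pi^2,s\}$ of $\Z[\pi,\bar\pi]$.

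Finally I would compute the index by expressing the basis $\{1,\pi,\pi^2,\pi^3\}$ of $\Z[\pi]$ in terms of $\{1,\pi,\pi^2,s\}$. The first three vectors are shared, so the only computation is to reduce $\pi^3$ modulo $\Z\langle1,\pi,\pi^2\rangle$. From $\pi^3=s\pi^2-p\pi$ (a consequence of $\bar\pi\pi^2=p\pi$), together with $\pi^2=s\pi-p$ and $s^2=as-n$, I obtain $\pi^3\equiv -ps\pmod{\Z\langle1,\pi,\pi^2\rangle}$, so the change-of-basis matrix is the identity in its upper-left $3\times3$ block with bottom row contributing the entry $-p$ against $s$; its determinant is $-p$, whence $[\Z[\pi,\bar\pi]:\Z[\pi]]=p$. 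The main obstacle is conceptual rather than computational: one must recognize that the real subfield generator $s$, not a naive power of $\pi$, supplies the extra basis vector, and that the relation $\pi\bar\pi=p$ is precisely what turns the single denominator $1/p$ in $\bar\pi=p/\pi$ into the index $p$. Once the basis $\{1,\pi,\pi^2,s\}$ is in hand, the determinant is immediate.
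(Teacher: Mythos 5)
Your proof is correct and, unlike the paper, actually supplies an argument: the paper's ``proof'' of this lemma is only the citation ``See [Waterhouse] \S 6,'' so there is no in-text argument to compare against. Your route --- replace $\bar\pi$ by $s=\pi+\bar\pi\in K_0$, observe $\Z[\pi,\bar\pi]=\Z[s][\pi]$ is free of rank two over $\Z[s]$ via the relation $\pi^2=s\pi-p$, extract the $\Z$-basis $\{1,\pi,\pi^2,s\}$, and read off the index from $\pi^3\equiv -ps \pmod{\Z\langle 1,\pi,\pi^2\rangle}$ --- is the standard relative-quadratic argument and is essentially what one finds in Waterhouse; the determinant computation checks out ($\pi^3 = ap-(n+p)\pi+a\pi^2-ps$ with $s^2=as-n$, giving a change-of-basis matrix of determinant $-p$). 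Your observation that the hypothesis $\bar\pi\notin\Z[\pi]$ is used only to force $\deg\pi=4$ is also correct, and your computation in fact shows the stronger dichotomy that the index is $1$ in degree $\le 2$ and exactly $p$ in degree $4$. One cosmetic caveat: in disposing of the degree-two case you write $\bar\pi=\Tr_{\Q(\pi)/\Q}(\pi)-\pi$, which presumes complex conjugation acts as the nontrivial automorphism of $\Q(\pi)$; if $\Q(\pi)$ happens to be real (i.e.\ $\pi=\pm\sqrt{p}\in K_0$) that identity fails, but then $\bar\pi=\pi\in\Z[\pi]$ trivially, so the contradiction with the hypothesis still goes through. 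It would be worth adding that half-sentence, but it does not affect the validity of the proof.
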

\begin{proof}
See \cite{Waterhouse} $\S 6$.
\end{proof}

From the above Lemma we deduce that if $\bar{\pi}\not\in\Z[\pi]$, then
\begin{equation}\label{eq:index}
[\OK:\Z[\pi,\bar{\pi}]] = \sqrt{\frac{\disc(\pi)}{p^2\disc(K)}}.
\end{equation}

In order to compute the discriminant of $\pi$, we need an explicit integral basis for $K=\Q(\zeta_5)$. An obvious integral basis is $\set{1,\zeta_5,\zeta_5^2,\zeta_5^3}$. But computations show that the representation $\pi = A + B\zeta_5 + C\zeta_5^2 + D\zeta_5^3$ is not convenient for studying the primality of $[\OK:\Z[\pi,\bar{\pi}]]$, and it does not easily generalize to a broader classes of curves.

Another way of choosing an integral basis is the following. Let $K_0$ be the maximal real subfield of $K$, which in this case is $K_0=\Q(\sqrt 5)$. Then $\set{1,\frac{1+\sqrt{5}}{2}}$ is an integral basis for $K_0$. We would like to extend this integral basis for $K_0$ to an integral basis for $K$. We shall first take a look at how $\zeta_5$, $\zeta_5^2$, and $\zeta_5^3$ are explicitly written as square roots. Note that we have the following equations.
\begin{eqnarray*}
\zeta_5 &=& -1 + \frac{1+\sqrt{5}}{2} + \frac{1}{4}\left(1-\sqrt{5}+\sqrt{-5-2\sqrt{5}}+\sqrt{-5+2\sqrt{5}}\right) \\
\zeta_5^2 &=& -1 - \frac{1+\sqrt{5}}{2} + \frac{1}{4}\left(1+\sqrt{5}+\sqrt{-5-2\sqrt{5}}-\sqrt{-5+2\sqrt{5}}\right) \\
\zeta_5^3 &=& -1 \qquad\qquad\,\, - \frac{1}{4}\left(1+\sqrt{5}+\sqrt{-5-2\sqrt{5}}-\sqrt{-5+2\sqrt{5}}\right)
\end{eqnarray*}
This shows that the following set
$$\Bigg\{1, \frac{1+\sqrt{5}}{2}, \frac{1}{4}\left(1-\sqrt{5}+\sqrt{-5-2\sqrt{5}}+\sqrt{-5+2\sqrt{5}}\right), $$ $$\frac{1}{4}\left(1+\sqrt{5}+\sqrt{-5-2\sqrt{5}}-\sqrt{-5+2\sqrt{5}}\right)\Bigg\}$$
is an integral basis for $K$. In this integral basis, we may write $\pi$ in the following form
\begin{eqnarray*}
\pi &=& a + b\frac{1+\sqrt{5}}{2} + c\frac{1}{4}\left(1-\sqrt{5}+\sqrt{-5-2\sqrt{5}}+\sqrt{-5+2\sqrt{5}}\right) \\
& & + d\frac{1}{4}\left(1+\sqrt{5}+\sqrt{-5-2\sqrt{5}}-\sqrt{-5+2\sqrt{5}}\right) \\
&=& \frac{1}{4}\left((4a+2b+c+d) + (2b-c+d)\sqrt{5} + (c+d)\sqrt{-5-2\sqrt{5}} + (c-d)\sqrt{-5+2\sqrt{5}}\right)
\end{eqnarray*}
If we set
\begin{eqnarray}
A &=& 4a + 2b + c + d \\
B &=& 2b - c + d \\
C &=& c + d \\
D &=& c - d
\end{eqnarray}
and conversely
\begin{eqnarray}
a &=& \frac{1}{4} A - \frac{1}{4} B - \frac{1}{4} C - \frac{1}{4} D \\
b &=& \frac{1}{2} B + \frac{1}{2} D \\
c &=& \frac{1}{2}C + \frac{1}{2} D \\
d &=& \frac{1}{2} C - \frac{1}{2} D,
\end{eqnarray}
we deduce that
\begin{equation}\label{eq:form-of-pi-zeta-5}
\pi = \frac{1}{4}\left(A + B\sqrt{5} + C\sqrt{-5-2\sqrt{5}} + D\sqrt{-5+2\sqrt{5}}\right).
\end{equation}
Since $\pi$ is a Weil $p$-number, we have $\pi\bar{\pi}=p$. Observe that
\begin{equation}
\pi\bar{\pi} = \frac{1}{16}(A^2 + 5B^2 + 5C^2 + 5D^2) + \frac{1}{16}(2AB + 2C^2 + 2CD - 2D^2)\sqrt{5}.
\end{equation}
Setting $\pi\bar{\pi} = p$ yields
\begin{eqnarray}
\label{eq:conditions-on-A-B-C-D-1}
A^2 + 5B^2 + 5C^2 + 5D^2 &=& 16p, \\
\label{eq:conditions-on-A-B-C-D-2}
AB + C^2 + CD - D^2 &=& 0.
\end{eqnarray}
We have the following result on the discriminant of $\pi$.
\begin{prop}
Let $\pi$ be given in the form (\ref{eq:form-of-pi-zeta-5}), then
\begin{equation}
\disc(\pi) = \frac{125}{16}p^2B^4(C^2-4CD-D^2)^2.
\end{equation}
\end{prop}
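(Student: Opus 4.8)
My plan is to compute $\disc(\pi)$ directly from the definition as the squared Vandermonde-type determinant built from the four conjugates of $\pi$. The discriminant of an algebraic number $\pi$ of degree four equals $\prod_{i<j}(\pi_i - \pi_j)^2$, where $\pi_1,\dots,\pi_4$ are the four Galois conjugates of $\pi$. So the first step is to write down these four conjugates explicitly using the expression in~(\ref{eq:form-of-pi-zeta-5}). The Galois group $\Gal(K/\Q)$ is cyclic of order four, and its action permutes the radicals $\sqrt{5}$, $\sqrt{-5-2\sqrt{5}}$, and $\sqrt{-5+2\sqrt{5}}$ in a controlled way. The key observation is that $\sqrt{-5-2\sqrt 5}\cdot\sqrt{-5+2\sqrt 5}=\sqrt{25-20}=\sqrt 5$, which ties the three radicals together and lets me track how a generator $\sigma$ of the Galois group acts.

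The plan for the second step is to fix a generator $\sigma$ (say the one sending $\zeta_5\mapsto\zeta_5^2$), determine its effect on each of the four basis radicals, and thereby write $\pi_i = \sigma^{i-1}(\pi)$ for $i=1,2,3,4$ as explicit $\Q$-linear combinations of the radicals with the same coefficients $A,B,C,D$ but permuted/sign-changed radical values. Under complex conjugation (which is $\sigma^2$, the element of order two) one has $\sqrt 5\mapsto\sqrt 5$ while the two imaginary radicals flip sign, giving $\bar\pi = \tfrac14(A+B\sqrt5 - C\sqrt{-5-2\sqrt5} - D\sqrt{-5+2\sqrt5})$; this is consistent with the relation $\pi\bar\pi=p$ already derived. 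Having all four conjugates in hand, I would form the six differences $\pi_i-\pi_j$.

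The third step is the actual evaluation. Rather than expand the full $4\times4$ Vandermonde determinant, I would group the six factors into the three ``conjugate pairs'' and exploit the structure: differences between $\pi$ and its complex conjugate, etc., produce clean products. Using $(\pi_i-\pi_j)^2$ and the constraint equations~(\ref{eq:conditions-on-A-B-C-D-1}) and~(\ref{eq:conditions-on-A-B-C-D-2}) to simplify, I expect the norm-type factor $\pi\bar\pi=p$ to appear naturally, accounting for the $p^2$, and the remaining symmetric combination of $B,C,D$ to collapse to $B^2(C^2-4CD-D^2)$. The constant $\tfrac{125}{16}=\tfrac{5^3}{16}$ should come from the powers of $\sqrt 5$ inherent in the radicals together with the $\tfrac14$ normalization in~(\ref{eq:form-of-pi-zeta-5}); indeed $\disc(\Q(\zeta_5))=125$, so this factor is a good sanity check.

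The main obstacle I anticipate is the bookkeeping in step three: the product $\prod_{i<j}(\pi_i-\pi_j)^2$ is a symmetric function in the conjugates and a priori an enormous polynomial in $A,B,C,D$, and it is not obvious a priori that $A$ drops out entirely and that the $C,D$ dependence factors through $(C^2-4CD-D^2)^2$. The cleanest route is probably to recognize $\disc(\pi)$ as the product of $\disc$ of the minimal polynomial and to use the fact that the relative extension $K/K_0$ contributes a relative discriminant while $K_0/\Q$ contributes $\disc(\Q(\sqrt5))=5$; organizing the computation as $\disc(\pi)=\Norm_{K_0/\Q}(\disc(\pi\bmod K_0))\cdot\disc(K_0)^{\deg}$ via the tower formula would let me compute a $2\times2$ relative discriminant first (a single square root expression), then take its norm down to $\Q$. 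I expect the relative step to produce the factor $C^2-4CD-D^2$ and the imaginary radicals, while applying the relation $\pi\bar\pi=p$ eliminates the cross terms and yields the $p^2 B^4$ factor after taking the norm.
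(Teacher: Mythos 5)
Your approach is essentially the paper's: the paper also proves this by direct computation of $\disc(\pi)=\prod_{i<j}(\pi_i-\pi_j)^2$ from the explicit conjugates, grouping the six differences into the complex-conjugation pair $(\pi-\pi^{\rho})(\pi^{\sigma}-\pi^{\sigma\rho})$ (which produces the $\sqrt{5}\,(C^2-4CD-D^2)$ factor via the identity $\sqrt{-5-2\sqrt{5}}\cdot\sqrt{-5+2\sqrt{5}}=\sqrt{5}$ that you single out) and the remaining four factors, repeatedly using $\pi\pi^{\rho}=p$ to extract the $p^{2}$; this is exactly the computation carried out in general form in the paper's lemma on $\disc(\pi)$ for cyclic quartic extensions, to which the proposition's proof defers. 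Your alternative organization via the relative-discriminant tower formula is just different bookkeeping for the same product, so the proposal matches the paper's route.
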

\begin{proof}
The proof is by computation. Details can be found in the proof of Proposition \ref{thm:disc-of-pi-cyclic}.
\end{proof}
Since $\disc(K)=125$, we thus deduce from (\ref{eq:index}) that the index of $\Z[\pi,\bar{\pi}]$ in $\OK$ is given by
\begin{equation}
[\OK:\Z[\pi,\bar{\pi}]] = \frac{1}{4}B^2\abs{C^2-4CD-D^2}.
\end{equation}

Now we get back to our question: when is $[\OK:\Z[\pi,\bar{\pi}]]$ a (large) prime number? Note that $B^2$ is a factor of the index, so we take $B^2=1$. However, with $B=\pm1$, (\ref{eq:conditions-on-A-B-C-D-1}) and (\ref{eq:conditions-on-A-B-C-D-2}) become
\begin{eqnarray}
p &=& \frac{1}{16}(A^2 + 5 + 5C^2 + 5D^2) , \\
A &=& \pm(D^2 - CD - C^2).
\end{eqnarray}
Thus we have the following
\begin{prop}
Let $C: y^2=x^5+1$ be defined over $\F_p$, $p\equiv 1\mod{5}$, with Frobenius element $\pi$ given in the form (\ref{eq:form-of-pi-zeta-5}). Then $C$ is (strictly) isolated if
$$p=\frac{1}{16}\left((C^2+CD-D^2)^2 + 5 + 5C^2 + 5D^2\right)$$
and
$$\frac{1}{4}\abs{C^2-4CD-D^2}$$
is a prime number of greater than 80 bits.
\end{prop}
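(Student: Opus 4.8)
The plan is to read the two displayed hypotheses as the specialization $B^2=1$ of the Weil-number conditions and then to quote the index formula already in hand. Writing $\pi$ as in (\ref{eq:form-of-pi-zeta-5}) with $A,B,C,D\in\Z$, the relation $\pi\bar\pi=p$ is equivalent to (\ref{eq:conditions-on-A-B-C-D-1})--(\ref{eq:conditions-on-A-B-C-D-2}). First I would substitute the hypothesis on $p$ into (\ref{eq:conditions-on-A-B-C-D-1}) to get $A^2+5B^2=(C^2+CD-D^2)^2+5$, and rewrite (\ref{eq:conditions-on-A-B-C-D-2}) as $AB=-(C^2+CD-D^2)$, hence $A^2B^2=(C^2+CD-D^2)^2$. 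Putting $t=(C^2+CD-D^2)^2$ and eliminating $A^2$ yields $5B^4-(t+5)B^2+t=0$, which factors as $(B^2-1)(5B^2-t)=0$. The root $5B^2=t$ is impossible over $\Z$ (it would force $C^2+CD-D^2=\pm B\sqrt5$ with $B\neq0$), and $B=0$ is excluded since it would make $A^2=5$; so $B^2=1$ and $A=\pm(C^2+CD-D^2)$.

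With $B^2=1$ the computation is essentially finished. The preceding Proposition gives $\disc(\pi)=\tfrac{125}{16}p^2(C^2-4CD-D^2)^2$, so (\ref{eq:index}) together with $\disc(K)=125$ yields
\[
[\OK:\Z[\pi,\bar\pi]]=\sqrt{\frac{\disc(\pi)}{p^2\disc(K)}}=\frac14\abs{C^2-4CD-D^2},
\]
which is prime and larger than $80$ bits by assumption. Since $p\equiv1\bmod5$, the curve has maximal CM by $K=\Q(\zeta_5)$ and is ordinary, and $h(K)=1$; the Corollary on strictly isolated curves then applies directly and shows $C$ is strictly isolated, in particular isolated. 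This accounts for the parenthetical word in the statement.

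The one step that deserves care---the hard part---is the use of (\ref{eq:index}), which is valid only when $\bar\pi\notin\Z[\pi]$, since it is exactly this that produces (via the Lemma) the factor $[\Z[\pi,\bar\pi]:\Z[\pi]]=p$ behind the $p^2$ in $\disc(\pi)$. I would verify it by comparing $p$-adic valuations: the discriminant gives $[\OK:\Z[\pi]]=p\ell$ with $\ell=\tfrac14\abs{C^2-4CD-D^2}$, while ordinarity of $C$ forces $\Z[\pi,\bar\pi]$ to be maximal at $p$, i.e.\ $p\nmid[\OK:\Z[\pi,\bar\pi]]$ (see \cite{Waterhouse}). Factoring $[\OK:\Z[\pi]]=[\OK:\Z[\pi,\bar\pi]]\,[\Z[\pi,\bar\pi]:\Z[\pi]]$ and taking valuations at $p$ then forces $[\Z[\pi,\bar\pi]:\Z[\pi]]=p$, hence $\bar\pi\notin\Z[\pi]$ and $p\nmid\ell$. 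It is worth noting that the conclusion is robust: were one instead in the case $\bar\pi\in\Z[\pi]$, the index would be $p\ell$, but every nontrivial divisor of $p\ell$ still has a prime factor of more than $80$ bits, so $C$ would remain isolated after a mild extension of the Corollary. The input I would single out as external is the implication that $C$ ordinary $\Rightarrow$ $\Z[\pi,\bar\pi]$ maximal at $p$; everything else follows from the discriminant computation already carried out.
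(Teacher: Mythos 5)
Your proof is correct and follows the same route the paper takes: specialize the index formula $[\OK:\Z[\pi,\bar{\pi}]]=\frac{1}{4}B^2\abs{C^2-4CD-D^2}$ at $B^2=1$ and invoke the corollary on (strictly) isolated curves together with $h(\Q(\zeta_5))=1$. The paper gives no separate proof of this proposition---it treats it as immediate from the preceding computation---so your two additions (deducing $B^2=1$ from the stated form of $p$ via the factorization $(B^2-1)(5B^2-t)=0$, and justifying $\bar{\pi}\notin\Z[\pi]$ through maximality of $\Z[\pi,\bar{\pi}]$ at $p$ for ordinary Jacobians) fill in steps the paper leaves implicit rather than diverging from its argument.
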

Computations of the frequency of occurrence of isolated curves $C:y^2=x^5+1$ are shown in section 7.

\subsection{Fields similar to $\Q(\zeta_5)$}

The integral basis chosen above for the CM field $K=\Q(\zeta_5)$ in fact easily generalizes to a larger category of CM fields. For this purpose we make the following assumptions on the CM field $K$.
\begin{enumerate}
\item[(i)] $K=\Q(\sqrt{-a-b\sqrt{d}}$ for some positive integers $a,b,d$;
\item[(ii)] $d\equiv 1\mod{4}$ is square-free;
\item[(iii)] $K$ is cyclic, which is equivalent to that $a^2-b^2d=c^2d$ for some $c\in\Z$, $c>0$;
\item[(iv)] The set
$$\Bigg\{ 1, \frac{1+\sqrt{d}}{2}, \frac{1}{4}\left(1+\sqrt{d}+\sqrt{-a-b\sqrt{d}}+\eps\sqrt{-a+b\sqrt{d}}\right),$$
$$\frac{1}{4}\left(1-\sqrt{d}+\sqrt{-a-b\sqrt{d}}-\eps\sqrt{-a+b\sqrt{d}}\right)\Bigg\}$$
is an integral basis for $K$ for $\eps=1$ or $-1$.
\end{enumerate}
We observe that under the above assumptions for $K$, any Weil $p$-number $\pi\in K$ can be written in the form
\begin{equation}\label{eq:form-of-pi-cyclic-integral}
\pi = \frac{1}{4}\left(A + B\sqrt{d} + C\sqrt{-a-b\sqrt{d}} + D\sqrt{-a+b\sqrt{d}}\right).
\end{equation}
Next we shall compute the index of $\Z[\pi,\bar{\pi}]$ in $\OK$. Note that we have the following result on the discriminant of $K$.
\begin{lem}
Let $K$ satisfy the assumptions (i)-(iv) listed above. Then $\disc(K) = a^2d$.
\end{lem}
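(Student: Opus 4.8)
The plan is to exploit the explicit integral basis furnished by assumption (iv): since those four elements $\omega_1,\dots,\omega_4$ form a $\Z$-basis of $\OK$, the field discriminant $\disc(K)$ equals the discriminant of that basis, so it suffices to evaluate $\det\bigl(\sigma_k(\omega_j)\bigr)^2$, where $\sigma_1,\dots,\sigma_4$ are the four embeddings $K\hookrightarrow\C$. Rather than wrestle with the quarter-integer entries directly, I would first pass to the transparent $\Q$-basis $\{1,\sqrt{d},\beta,\gamma\}$, where $\beta=\sqrt{-a-b\sqrt{d}}$ and $\gamma=\sqrt{-a+b\sqrt{d}}$. Writing each $\omega_j$ in this basis produces a block upper-triangular change-of-basis matrix $T$ (the lower-left $2\times 2$ block vanishes), and multiplying the two diagonal blocks gives $\det T=-\eps/16$ at once. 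Hence $\disc(\omega_1,\dots,\omega_4)=(\det T)^2\,\disc(1,\sqrt{d},\beta,\gamma)=\tfrac1{256}\,\disc(1,\sqrt{d},\beta,\gamma)$. One should first check that $\{1,\sqrt{d},\beta,\gamma\}$ really is a $\Q$-basis: although $\gamma$ is a $K_0$-multiple of $\beta$ over $K_0=\Q(\sqrt{d})$, the quadruple still spans $K$ because $a\neq 0$.

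The crux is then to compute $\disc(1,\sqrt{d},\beta,\gamma)=\det(M)^2$ with $M=\bigl(\sigma_k(e_i)\bigr)$ and $e=(1,\sqrt{d},\beta,\gamma)$. Here assumption (iii) enters decisively: the relation $a^2-b^2d=c^2d$ is exactly what makes $\beta\gamma=c\sqrt{d}\in K_0$, which both exhibits $K$ as Galois (all four roots $\pm\beta,\pm\gamma$ lie in $K$) and pins down the cyclic action of $\Gal(K/\Q)$. Choosing a generator $\sigma$ with $\sigma(\sqrt{d})=-\sqrt{d}$, the normalization $\beta\gamma=c\sqrt{d}$ forces $\sigma(\beta)=\gamma$ and $\sigma(\gamma)=-\beta$, and iterating produces the other embeddings explicitly. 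Writing out the four rows of $M$, the sign symmetries $(\beta,\gamma)\mapsto(-\beta,-\gamma)$ and $\sqrt{d}\mapsto-\sqrt{d}$ let me split $M$ into two $2\times 2$ blocks after elementary row operations; the blocks evaluate cleanly, and the simplification $\beta^2+\gamma^2=-2a$ collapses everything to $\det M=\pm 16\,a\sqrt{d}$, whence $\disc(1,\sqrt{d},\beta,\gamma)=256\,a^2d$.

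Combining the two steps yields $\disc(K)=\tfrac1{256}\cdot 256\,a^2d=a^2d$, as claimed. The one genuinely delicate point is the identification of the Galois action in the second step: getting the signs of $\sigma(\beta)$ and $\sigma(\gamma)$ correct, rather than merely guessing $\beta\mapsto\pm\gamma$, is precisely what requires (iii) through the normalization $\beta\gamma=c\sqrt{d}$, and a wrong sign there would corrupt the off-diagonal block and the final constant. Everything downstream—the block factorization and the collapse via $\beta^2+\gamma^2=-2a$—is routine determinant bookkeeping; note also that the parameter $\eps$ affects only the sign of $\det T$, not the squared discriminant, so the answer is independent of which value of $\eps$ makes (iv) hold. (An alternative route would compute the power-basis discriminant $\disc(\beta)$ from the biquadratic minimal polynomial $x^4+2ax^2+(a^2-b^2d)$ and then divide by the index $[\OK:\Z[\beta]]^2$, but determining that index is essentially as laborious as the change-of-basis computation above, so the direct approach is preferable.)
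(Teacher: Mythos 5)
Your computation is correct and, since the paper's own ``proof'' consists only of the words ``Proof is computational,'' your argument simply supplies the calculation the paper omits: the change-of-basis determinant from assumption (iv) is indeed $-\eps/16$, and with $\sigma(\beta)=\gamma$, $\sigma(\gamma)=-\beta$ (forced by $\beta\gamma=c\sqrt{d}$ from assumption (iii)) one gets $\det M=\pm 16a\sqrt{d}$, hence $\disc(1,\sqrt{d},\beta,\gamma)=256a^2d$ and $\disc(K)=a^2d$. I verified the key numbers ($\beta^2+\gamma^2=-2a$, the block determinants, and the final cancellation of $256$), and your remarks on the sign of $\eps$ and on $\{1,\sqrt{d},\beta,\gamma\}$ being a $\Q$-basis are both sound.
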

\begin{proof}
Proof is computational.
\end{proof}
We also have the following computation for the discriminant of $\pi$.
\begin{prop}
Let $\pi$ be a Weil $p$-number in a field $K$ satisfying assumptions (i)-(iv). Then
\begin{equation}
\disc(\pi) = \frac{a^2d}{16}B^4(cC^2-2bCD-cD^2)^2.
\end{equation}
\end{prop}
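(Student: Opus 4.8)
The plan is to compute $\disc(\pi)=\prod_{i<j}(\pi_i-\pi_j)^2$ directly from the four conjugates of $\pi$, organized through the tower $\Q\subset K_0\subset K$ with $K_0=\Q(\sqrt d)$. First I would pin down the Galois action. Writing $\theta=\sqrt{-a-b\sqrt d}$ and $\theta'=\sqrt{-a+b\sqrt d}$, the cyclicity hypothesis (iii) gives $\theta\theta'=\pm c\sqrt d$, and a generator $\sigma$ of $\Gal(K/\Q)\cong\Z/4\Z$ must act by $\sqrt d\mapsto-\sqrt d$, $\theta\mapsto\theta'$, $\theta'\mapsto-\theta$ (the last sign being forced by applying $\sigma$ to $\theta\theta'=\pm c\sqrt d$). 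Hence $\sigma^2$ is complex conjugation, and from (\ref{eq:form-of-pi-cyclic-integral}) the four conjugates $\pi,\pi^\sigma,\bar\pi=\pi^{\sigma^2},\pi^{\sigma^3}$ are written down explicitly; note $\pi-\bar\pi=\tfrac12(C\theta+D\theta')$ and $\pi+\bar\pi=\tfrac12(A+B\sqrt d)\in K_0$. One could instead expand the quartic minimal polynomial of $\pi$ and invoke the discriminant-of-a-quartic formula, but the tower factorization is far cleaner and isolates the two structural factors.

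Next I would split the six factors of the discriminant into the two differences lying over $K_0$ and the four cross differences. The within-$K_0$ contribution is $(\pi-\bar\pi)^2(\pi^\sigma-\pi^{\sigma^3})^2=\Norm_{K_0/\Q}\big((\pi-\bar\pi)^2\big)$. Using $\theta^2=-a-b\sqrt d$, $\theta'^2=-a+b\sqrt d$ and $\theta\theta'=\mp c\sqrt d$, one reduces $(\pi-\bar\pi)^2=\tfrac14(C\theta+D\theta')^2$ to an element of $K_0$ of the shape $\tfrac14\big(-a(C^2+D^2)+u\sqrt d\big)$ for an explicit integer combination $u$ of $C,D$; taking its norm and invoking the relation $a^2-b^2d=c^2d$ to eliminate $a$ collapses $a^2(C^2+D^2)^2-d\,u^2$ into the perfect square $d\,(cC^2-2bCD-cD^2)^2$. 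This is exactly where the claimed factor $(cC^2-2bCD-cD^2)^2$ is produced.

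For the cross contribution I would use resultants. Let $g(T)=(T-\pi)(T-\bar\pi)=T^2-(\pi+\bar\pi)T+p$, where the constant term is $p$ precisely because $\pi$ is a Weil $p$-number ($\pi\bar\pi=p$); its $\sigma$-conjugate $g^\sigma(T)=T^2-(\pi+\bar\pi)^\sigma T+p$ has the same constant term. The four cross differences multiply to $\mathrm{Res}(g,g^\sigma)$, and since the two monic quadratics share the constant term $p$, a one-line computation (evaluating $g^\sigma$ at the roots of $g$) gives $\mathrm{Res}(g,g^\sigma)=\big((\pi+\bar\pi)-(\pi+\bar\pi)^\sigma\big)^2\,p=(B\sqrt d)^2\,p$. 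Squaring, the cross contribution is $B^4 p^2\cdot(\text{power of }d)$, and this is the source of the factor $p^2B^4$.

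Finally I would multiply the two contributions and simplify the rational constant, using $a^2-b^2d=c^2d$ together with $\disc(K)=a^2d$, arriving at the stated expression; the appearance of $\disc(K)=a^2d$ and of the factor $p^2$ from the Weil condition are both consistent with the clean index $\tfrac14B^2\abs{cC^2-2bCD-cD^2}$ predicted by (\ref{eq:index}). I expect the genuine obstacle to be the middle step: the norm/symmetric-function identity collapsing $a^2(C^2+D^2)^2-d\,u^2$ into a perfect square hinges delicately on the cyclicity relation, and getting the overall constant together with the internal signs right — the $\eps=\pm1$ ambiguity in hypothesis (iv), equivalently the branch of $\theta\theta'=\pm c\sqrt d$ — requires careful bookkeeping, whereas everything else is routine conjugate arithmetic.
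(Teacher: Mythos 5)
Your decomposition is the same one the paper itself relies on: its proof of this proposition is deferred to the computation behind Lemma \ref{thm:disc-of-pi-cyclic}, which partitions the six root differences exactly as you do, into the two complex-conjugate differences $\pi-\bar{\pi}$, $\pi^{\sigma}-\pi^{\sigma\rho}$ and the four cross differences. Both of your block evaluations check out: the norm of $(\pi-\bar{\pi})^2$ does collapse, via $a^2-b^2d=c^2d$, to $\tfrac{d}{16}(cC^2-2bCD-cD^2)^2$ (and the $\eps$, i.e.\ $\theta\theta'=\pm c\sqrt{d}$, ambiguity you worry about is harmless since this block is squared); and your resultant identity $\mathrm{Res}(g,g^{\sigma})=\bigl((\pi+\bar{\pi})-(\pi+\bar{\pi})^{\sigma}\bigr)^2p=B^2dp$ is a genuinely cleaner route to the cross block than the paper's expansion into traces of symmetric functions. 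The one place you should not paper over the arithmetic is the final assembly: multiplying your two blocks gives
\begin{equation*}
\disc(\pi)=\bigl(B^2dp\bigr)^2\cdot\frac{d}{16}(cC^2-2bCD-cD^2)^2=\frac{d^3}{16}\,p^2B^4(cC^2-2bCD-cD^2)^2,
\end{equation*}
which carries a factor $p^2$ that the displayed statement omits. That factor must be there: it appears in the special case $\disc(\pi)=\frac{125}{16}p^2B^4(C^2-4CD-D^2)^2$ for $\Q(\zeta_5)$, and without it formula (\ref{eq:index}) would not return the index $\frac14B^2\abs{cC^2-2bCD-cD^2}$ asserted immediately afterwards. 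So you are proving the corrected statement, not the printed one; likewise your constant $d^3$ matches the printed $a^2d$ only under the normalization $a=d$ (which holds in all of the paper's examples and is forced if one also believes the lemma $\disc(K)=a^2d$), and it is worth saying so explicitly rather than claiming the constants ``simplify to the stated expression.''
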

\begin{proof}
The proof is computational.
\end{proof}
Therefore the index of $\Z[\pi,\bar{\pi}]$ in $\OK$ can be written as
\begin{equation}
[\OK:\Z[\pi,\bar{\pi}]] = \frac{1}{4}B^2\abs{cC^2-2bCD-cD^2}.
\end{equation}
We then obtain the following similar results describing whether a nonsupersingular genus two curve with CM by $K$ is isolated.
\begin{prop}
Let $C$ be a nonsupersingular genus two curve defined over $\F_p$ with Frobenius element $\pi$ given in the form (\ref{eq:form-of-pi-cyclic-integral}). Then $C$ is isolated if there are integers $C,D\in\ZZ$ such that
$$p = \frac{1}{16}\left(\left(\frac{b}{2}C^2+cCD-\frac{b}{2}D^2\right)^2 + d + aC^2 + aD^2\right)$$
and
$$\frac{1}{4}\abs{cC^2-2bCD-cD^2}$$
is a prime number of greater than 80 bits.
\end{prop}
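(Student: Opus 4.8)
The plan is to follow the template of the $\Q(\zeta_5)$ computation, reducing the claim to the primality of the index $[\OK:\Z[\pi,\bar{\pi}]]$ and then invoking the corollary to Proposition (Waterhouse), which asserts that a large prime index forces $C$ to be isolated. Since the index formula $[\OK:\Z[\pi,\bar{\pi}]]=\frac{1}{4}B^2\abs{cC^2-2bCD-cD^2}$ has already been established above, the entire task is to show that, under the two displayed hypotheses, this quantity equals the prescribed large prime, and that the hypotheses are exactly what one obtains by imposing $\pi\bar{\pi}=p$.

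First I would exploit the Weil condition $\pi\bar{\pi}=p$. Since complex conjugation fixes the real quadratic subfield $\Q(\sqrt{d})$ and negates the two imaginary generators, writing $\pi$ as in (\ref{eq:form-of-pi-cyclic-integral}) gives $\bar{\pi} = \frac{1}{4}(A + B\sqrt{d} - C\sqrt{-a-b\sqrt{d}} - D\sqrt{-a+b\sqrt{d}})$, so that
\[
\pi\bar{\pi} = \frac{1}{16}\Big[(A+B\sqrt{d})^2 - \big(C\sqrt{-a-b\sqrt{d}} + D\sqrt{-a+b\sqrt{d}}\big)^2\Big].
\]
Expanding, separating the rational part from the $\sqrt{d}$-part, and imposing that $\pi\bar{\pi}$ be the rational number $p$, yields the two integral conditions
\begin{align*}
A^2 + dB^2 + aC^2 + aD^2 &= 16p,\\
2AB + bC^2 + 2cCD - bD^2 &= 0.
\end{align*}

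The key, and only delicate, point is the cross term $\sqrt{-a-b\sqrt{d}}\cdot\sqrt{-a+b\sqrt{d}}$. Although the product of the radicands is $a^2-b^2d=c^2d$, both factors are square roots of negative reals; choosing the principal branches (positive imaginary part) one gets $\sqrt{-a-b\sqrt{d}}\cdot\sqrt{-a+b\sqrt{d}} = -c\sqrt{d}$, with the crucial minus sign. Getting this sign wrong flips the $cCD$ term and produces the wrong quadratic form in the formula for $p$. This is the step I expect to require the most care; it is precisely the sign that makes the $\Q(\zeta_5)$ specialization ($a=5$, $b=2$, $c=1$, $d=5$) reproduce $A=\pm(D^2-CD-C^2)$ as in the earlier discussion.

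With the two conditions in hand, the remaining steps are routine. Because the index formula carries the factor $B^2$, I would impose $B^2=1$ (the natural sufficient choice, exactly as in the cyclotomic case), so that the index collapses to $\frac{1}{4}\abs{cC^2-2bCD-cD^2}$. Substituting $B=\pm 1$ into the second Weil condition solves $A = \mp(\frac{b}{2}C^2 + cCD - \frac{b}{2}D^2)$, and feeding $A^2$ and $B^2=1$ into the first Weil condition gives precisely
\[
p = \frac{1}{16}\left(\left(\tfrac{b}{2}C^2+cCD-\tfrac{b}{2}D^2\right)^2 + d + aC^2 + aD^2\right),
\]
the displayed equation. Hence, whenever integers $C,D$ produce such a prime $p$ together with $\frac{1}{4}\abs{cC^2-2bCD-cD^2}$ a prime exceeding $80$ bits, the index $[\OK:\Z[\pi,\bar{\pi}]]$ is that large prime, and the corollary immediately yields that $C$ is isolated.
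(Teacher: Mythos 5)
Your proposal is correct and follows essentially the same route the paper takes (the paper leaves this proposition unproved, but the intended argument is exactly the template worked out for $\Q(\zeta_5)$): expand $\pi\bar{\pi}=p$ to get the rational and $\sqrt{d}$-components, set $B=\pm1$ so the index formula $\frac{1}{4}B^2\abs{cC^2-2bCD-cD^2}$ collapses to the stated prime, eliminate $A$ to recover the displayed expression for $p$, and invoke the corollary that a prime index $[\OK:\Z[\pi,\bar{\pi}]]$ of more than 80 bits forces isolation. Your attention to the sign of the cross term $\sqrt{-a-b\sqrt{d}}\cdot\sqrt{-a+b\sqrt{d}}=-c\sqrt{d}$ is exactly the point that makes the general formula specialize correctly to the cyclotomic case.
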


\begin{example}
The above proposition provides an algorithm to find explicitly isolated curves. Here is an explicit example. Let $p = $ $$771091319962693236371145032994729162932757389399122231169290825163207497
497840084770171.$$ Then $y^2=x^5+1$ defined over $\F_p$ is strictly isolated ($h_{\Q(\zeta_5)}=1)$, with a conductor gap $$2955859292970642142002483626678135540313500021819
,$$ a 162-bit prime.
\end{example}

\section{A Broader Class of Isolated Curves}

The isolated curves considered in the previous sections were mainly curves with maximum numbers of symmetry, for example, the curve $y^2=x^5+1$. Curves with less symmetry should also be considered, although the relevant computation in the CM field might be harder. Moreover, curves with index equal to a large prime number occur infrequently among all genus 2 curves. Curves with almost-prime index should also be considered. We should also consider curves with index $[\OK:\Z[\pi,\bar{\pi}]]=lm$, where $l$ is a large prime number, and $m$ is smooth and small. These curves constitute a broader class of curves for use in hyperelliptic curve cryptography.

In this section we shall consider curves with almost-prime index. For this reason it is not necessary here to restrict the quartic fields to the category given in the above section. We consider curves with CM by cyclic quartic imaginary extensions $K/\Q$, with $K = \Q(\sqrt{-a-b\sqrt{d}})$, $a,b,d\in\Z$ and $a^2=(b^2+c^2)d$ for some positive $c\in\Z$.

For simplicity we assume that the Weil $p$-number $\pi$ in $K$ can be written as
\begin{equation}\label{eq:form-of-pi}
\pi = A + Bw + C\eta + D\eta',
\end{equation}
where $\set{1,w}$ is an integral basis for $K_0$, and $\eta\in K$ is an element such that $\eta^2$ is totally negative in $K_0\subseteq\R$. By $\eta'$ we denote one of the non-complex conjugate Galois conjugates, and let $\sigma$ denote the generator of $\Gal(K/\Q)$ such that $\eta'=\eta^{\sigma}$. We first compute the discriminant of $\Z[\pi,\bar{\pi}]$, which is $\disc(\pi)/p^2$.

\begin{lem}\label{thm:disc-of-pi-cyclic}
Let $\pi$ be a Weil $p$-number in a quartic cyclic extension $K/\Q$. Let $\sigma$ be a generator of $\Gal(K/\Q)$ and $\rho$ denote complex conjugation. Then
\begin{equation}
\disc(\pi) = p^2 [\Tr((\pi-\pi^{\sigma})(\pi-\pi^{\sigma\rho}))]^2\Tr(\pi\pi^{\sigma}(\pi-\pi^{\rho})(\pi^{\sigma}-\pi^{\sigma\rho})).
\end{equation}
\end{lem}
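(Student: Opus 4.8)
The plan is to evaluate $\disc(\pi)$ from the standard expression $\disc(\pi)=\prod_{i<j}(\pi_i-\pi_j)^2$, where $\pi_1,\dots,\pi_4$ are the four Galois conjugates of $\pi$, and then to organize the factors according to the orbits of $\langle\sigma\rangle$ acting on these conjugates. Because $K/\Q$ is cyclic of order $4$ and complex conjugation $\rho$ is a nontrivial automorphism of order $2$, it is forced to be the unique involution $\sigma^2$; the four conjugates are therefore $\pi,\ \pi^\sigma,\ \pi^\rho,\ \pi^{\sigma\rho}$ (with $\pi^\rho=\pi^{\sigma^2}$ and $\pi^{\sigma\rho}=\pi^{\sigma^3}$). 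The only arithmetic input is the Weil condition $\pi\,\pi^\rho=p$ together with its $\sigma$-translate $\pi^\sigma\pi^{\sigma\rho}=p$; everything else is formal Galois manipulation.

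First I would let $\sigma$ act on the six unordered pairs of conjugates. This action splits them into one orbit of length four, the \emph{adjacent} pairs $\{\pi,\pi^\sigma\},\{\pi^\sigma,\pi^\rho\},\{\pi^\rho,\pi^{\sigma\rho}\},\{\pi^{\sigma\rho},\pi\}$, and one orbit of length two, the \emph{opposite} pairs $\{\pi,\pi^\rho\}$ and $\{\pi^\sigma,\pi^{\sigma\rho}\}$. Accordingly $\disc(\pi)=P^2\,\big[(\pi-\pi^\rho)(\pi^\sigma-\pi^{\sigma\rho})\big]^2$, where $P=(\pi-\pi^\sigma)(\pi^\sigma-\pi^\rho)(\pi^\rho-\pi^{\sigma\rho})(\pi^{\sigma\rho}-\pi)$. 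Each bracketed product is $\Gal(K/\Q)$-invariant, hence lies in $\Q$, and the remaining task is to match each with one of the two trace factors in the statement.

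For the length-two orbit, write $U=\pi-\pi^\rho$ and $V=\pi^\sigma-\pi^{\sigma\rho}$, so that $\sigma(U)=V$ and $\sigma(V)=-U$. Expanding $\Tr\big(\pi\pi^\sigma(\pi-\pi^\rho)(\pi^\sigma-\pi^{\sigma\rho})\big)$ as the sum of its four $\sigma$-translates, the factor $UV$ reappears in each translate up to an alternating sign, so the trace equals $UV$ times $\pi\pi^\sigma-\pi^\sigma\pi^\rho+\pi^\rho\pi^{\sigma\rho}-\pi^{\sigma\rho}\pi$. This last quantity factors once more as $(\pi-\pi^\rho)(\pi^\sigma-\pi^{\sigma\rho})=UV$, so the trace equals $(UV)^2$, which is exactly the length-two orbit product.

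The length-four orbit is where the actual work lies, and I expect it to be the \emph{main obstacle}. Put $T=\Tr\big((\pi-\pi^\sigma)(\pi-\pi^{\sigma\rho})\big)$ and set $u=\pi+\pi^\rho$, $v=\pi^\sigma+\pi^{\sigma\rho}$. Expanding $T$ over its four $\sigma$-translates and using $\pi\pi^\rho=\pi^\sigma\pi^{\sigma\rho}=p$ to replace the two norm cross-terms by $p$, the sum should collapse to the perfect square $T=(u-v)^2$. Independently, grouping the four factors of $P$ into the pairs $(\pi-\pi^\sigma)(\pi^\rho-\pi^{\sigma\rho})=2p-(\pi\pi^{\sigma\rho}+\pi^\sigma\pi^\rho)$ and $(\pi^\sigma-\pi^\rho)(\pi^{\sigma\rho}-\pi)=2p-(\pi\pi^\sigma+\pi^\rho\pi^{\sigma\rho})$ and again invoking the Weil relations to rewrite the resulting symmetric functions, one should obtain $P=p\,(u-v)^2=p\,T$. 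Hence $P^2=p^2T^2$, and substituting this together with the length-two computation into the orbit factorization of $\disc(\pi)$ yields precisely the asserted identity. The delicate steps are tracking the signs coming from $\sigma(U)=V,\ \sigma(V)=-U$ and verifying the two perfect-square identities $T=(u-v)^2$ and $P=p(u-v)^2$; both become transparent once the Weil relations are used to eliminate $\pi\pi^\rho$ and $\pi^\sigma\pi^{\sigma\rho}$ in favor of $p$.
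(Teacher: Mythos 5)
Your proposal is correct and follows essentially the same route as the paper: the same splitting of the six squared differences into the four ``adjacent'' pairs and the two ``opposite'' pairs, the same grouping $(2p-(\pi\pi^{\sigma\rho}+\pi^{\sigma}\pi^{\rho}))(2p-(\pi\pi^{\sigma}+\pi^{\rho}\pi^{\sigma\rho}))$ yielding $P=p\,\Tr((\pi-\pi^{\sigma})(\pi-\pi^{\sigma\rho}))$, and the same identification of $\left[(\pi-\pi^{\rho})(\pi^{\sigma}-\pi^{\sigma\rho})\right]^2$ with $\Tr(\pi\pi^{\sigma}(\pi-\pi^{\rho})(\pi^{\sigma}-\pi^{\sigma\rho}))$. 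The only cosmetic differences are that you verify the last identity by summing the $\sigma$-translates with their signs rather than expanding the square directly, and you record the (correct but unneeded) extra fact $T=(u-v)^2$.
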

\begin{proof}
By the definition of the discriminant we have
\begin{equation}
\disc(\pi)=\left[(\pi-\pi^{\sigma})(\pi^{\rho}-\pi^{\sigma})(\pi^{\sigma}-\pi^{\rho})(\pi^{\sigma\rho}-\pi)\right]^2\left[(\pi-\pi^{\rho})(\pi^{\sigma}-\pi^{\sigma\rho})\right]^2.
\end{equation}
First note that
\begin{eqnarray*}
& &(\pi-\pi^{\sigma})(\pi^{\rho}-\pi^{\sigma})(\pi^{\sigma}-\pi^{\rho})(\pi^{\sigma\rho}-\pi) \\
&=& (\pi\pi^{\rho}+\pi^{\sigma}\pi^{\sigma\rho}-\pi^{\sigma}\pi^{\rho}-\pi\pi^{\sigma\rho})(\pi\pi^{\rho}+\pi^{\sigma}\pi^{\sigma\rho}-\pi\pi^{\sigma}-\pi^{\rho}\pi^{\sigma\rho}) \\
&=& (2p-\pi^{\sigma}\pi^{\rho}-\pi\pi^{\sigma\rho})(2q-\pi\pi^{\sigma}-\pi^{\rho}\pi^{\sigma\rho}) \\
&=& 4p^2 - 2p(\pi\pi^{\sigma}+\pi^{\sigma}\pi^{\rho}+\pi^{\rho}\pi^{\sigma\rho}+\pi^{\sigma\rho}\pi) + p\left[\pi^2+(\pi^{\sigma})^2+(\pi^{\rho})^2+(\pi^{\sigma\rho})^2\right] \\
&=& p\left[\trace(\pi^2) - \trace(\pi\pi^{\sigma}) -\trace(\pi\pi^{\sigma\rho})+\trace(\pi^{\sigma}\pi^{\sigma\rho})\right] \\
&=& p\trace(\pi^2-\pi\pi^{\sigma}+\pi^{\sigma}\pi^{\sigma\rho}-\pi\pi^{\sigma\rho}) \\
&=& p\trace\left[(\pi-\pi^{\sigma})(\pi-\pi^{\sigma\rho})\right].
\end{eqnarray*}
Next note that
\begin{eqnarray*}
& & (\pi - \pi^{\rho})^2(\pi^{\sigma}-\pi^{\rho})^2 \\
&=& (\pi\pi^{\sigma}-\pi^{\sigma}\pi^{\rho}+\pi^{\rho}\pi^{\sigma\rho}-\pi^{\sigma\rho}\pi)^2 \\
&=& (\pi\pi^{\sigma}+\pi^{\rho}\pi^{\sigma\rho})^2 + (\pi^{\sigma}\pi^{\rho}+\pi^{\sigma\rho}\pi)^2 - 2(\pi\pi^{\sigma}+\pi^{\rho}\pi^{\sigma\rho})(\pi^{\sigma}\pi^{\rho}+\pi^{\sigma\rho}\pi) \\
&=& \Tr((\pi\pi^{\rho})^2) + 4p^2 -2p\Tr(\pi^2) \\
&=& \Tr(\pi\pi^{\sigma}(\pi-\pi^{\rho})(\pi^{\sigma}-\pi^{\sigma\rho})).
\end{eqnarray*}
Multiplying both parts together, we obtain the desired result.
\end{proof}

We have thus the following

\begin{cor}
Let $\pi$ be a Weil $p$-number in a cyclic quartic CM field $K$. Then
\begin{equation}
\disc(\Z[\pi,\bar{\pi}]) = [\Tr((\pi-\pi^{\sigma})(\pi-\pi^{\sigma\rho}))]^2\Tr(\pi\pi^{\sigma}(\pi-\pi^{\rho})(\pi^{\sigma}-\pi^{\sigma\rho})).
\end{equation}
\end{cor}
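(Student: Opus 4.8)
The plan is to obtain the corollary as an immediate consequence of Lemma \ref{thm:disc-of-pi-cyclic} together with the index computation recorded earlier in the section. The observation that drives everything is that $\disc(\pi)$, in the sense used throughout, is exactly the discriminant of the order $\Z[\pi]$: since $K$ is quartic and $\pi$ generates $K$ over $\Q$, the power basis $\{1,\pi,\pi^2,\pi^3\}$ is a $\Z$-basis of $\Z[\pi]$, so $\disc(\pi) = \disc(\Z[\pi])$. Thus the two quantities appearing in the lemma and in the corollary---namely $\disc(\pi)$ and $\disc(\Z[\pi,\bar\pi])$---are the discriminants of two nested orders $\Z[\pi]\subseteq\Z[\pi,\bar\pi]$, and I would relate them through the standard change-of-lattice formula.

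Concretely, first I would invoke the fact that for full-rank $\Z$-lattices $\Z[\pi]\subseteq\Z[\pi,\bar\pi]$ one has
$$\disc(\Z[\pi]) = [\Z[\pi,\bar\pi]:\Z[\pi]]^2\,\disc(\Z[\pi,\bar\pi]),$$
which is the elementary transformation law for the discriminant under passage to a sublattice of finite index (the Gram determinant of the trace form scales by the square of the determinant of the integral change-of-basis matrix, whose absolute value is the index). Next I would supply the value of the index: by the Waterhouse lemma quoted just above---the one asserting $[\Z[\pi,\bar\pi]:\Z[\pi]]=p$ when $\bar\pi\notin\Z[\pi]$---the index equals $p$. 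Substituting gives $\disc(\pi) = p^2\,\disc(\Z[\pi,\bar\pi])$, that is, $\disc(\Z[\pi,\bar\pi]) = \disc(\pi)/p^2$.

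Finally I would divide the formula of Lemma \ref{thm:disc-of-pi-cyclic} by $p^2$. The factor $p^2$ sitting in front of the trace expression in the lemma cancels exactly against the $p^2$ coming from the index, leaving
$$\disc(\Z[\pi,\bar\pi]) = [\Tr((\pi-\pi^{\sigma})(\pi-\pi^{\sigma\rho}))]^2\,\Tr(\pi\pi^{\sigma}(\pi-\pi^{\rho})(\pi^{\sigma}-\pi^{\sigma\rho})),$$
which is precisely the claimed identity.

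There is essentially no computational obstacle here, since all the hard trace algebra was already dispatched inside the proof of Lemma \ref{thm:disc-of-pi-cyclic}. The only point requiring care is the hypothesis under which the index equals $p$: the Waterhouse lemma needs $\bar\pi\notin\Z[\pi]$, so strictly speaking the corollary should be read under that (generic) assumption. This is exactly the situation of interest, since $\bar\pi\in\Z[\pi]$ would force $\Z[\pi,\bar\pi]=\Z[\pi]$ and hence a trivial conductor gap; I would flag this assumption explicitly rather than leave it implicit.
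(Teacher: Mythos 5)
Your proposal is correct and follows essentially the same route as the paper, whose entire proof is the observation that $\disc(\Z[\pi,\bar{\pi}])=\disc(\pi)/p^2$ combined with Lemma \ref{thm:disc-of-pi-cyclic}. You go further by actually justifying that identity via the sublattice discriminant formula and Waterhouse's result $[\Z[\pi,\bar{\pi}]:\Z[\pi]]=p$, and you are right to flag the hypothesis $\bar{\pi}\notin\Z[\pi]$, which the paper leaves implicit.
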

\begin{proof}
It suffices to note that $\disc(\Z[\pi,\bar{\pi}])=\disc(\pi)/p^2$.
\end{proof}

Note that the index of $\Z[\pi,\bar{\pi}]$ in $\OK$ equals $\sqrt{\disc(\Z[\pi,\bar{\pi}])/\disc(K)}$. Hence if we want the index to have a large prime factor, or to be almost prime, we need $\Tr((\pi-\pi^{\sigma})(\pi-\pi^{\sigma\rho}))$ to be as small as possible, and $\Tr(\pi\pi^{\sigma}(\pi-\pi^{\rho})(\pi^{\sigma}-\pi^{\sigma\rho}))$ to be large. Taking into account the specific form of $\pi$ in (\ref{eq:form-of-pi}), we have the following results from computation.

If $d\equiv 1\mod{4}$, then we take $w=\frac{1+\sqrt{d}}{2}$, so that $\set{1,w}$ is an integral basis for $K_0$. Write $\eta = \sqrt{-a-b\sqrt{d}}$, where $a,b$ can be both integers or half-integers to ensure that $\sqrt{-a\pm b\sqrt{d}}\in \OK$. Then $\sigma$ takes $\sqrt{d}$ to $-\sqrt{d}$ and $\sqrt{-a-b\sqrt{d}}$ to $\sqrt{-a+b\sqrt{d}}$. Hence we have the following expressions for all the four conjugates of $\pi$.
\begin{eqnarray}
\pi &=& A + B\frac{1+\sqrt{d}}{2}+ C\sqrt{-a-b\sqrt{d}} + D\sqrt{-a+b\sqrt{d}} \\
\pi^{\sigma} &=& A - B\frac{1+\sqrt{d}}{2} - D\sqrt{-a-b\sqrt{d}} + C\sqrt{-a+b\sqrt{d}} \\
\pi^{\rho} &=& A + B\frac{1+\sqrt{d}}{2} - C\sqrt{-a-b\sqrt{d}} - D\sqrt{-a+b\sqrt{d}} \\
\pi^{\sigma\rho} &=& A - B\frac{1+\sqrt{d}}{2} + D\sqrt{-a-b\sqrt{d}} - C\sqrt{-a+b\sqrt{d}}
\end{eqnarray}

If $d\equiv 2,3\mod{4}$, then we take $w=\sqrt{d}$, so that $\set{1,w}$ is an integral basis for $K_0$. Write $\eta = \sqrt{-a-b\sqrt{d}}$, where $a,b\in\Z$. We have the following expressions for all the four conjugates of $\pi$.
\begin{eqnarray}
\pi &=& A + B\sqrt{d}+ C\sqrt{-a-b\sqrt{d}} + D\sqrt{-a+b\sqrt{d}} \\
\pi^{\sigma} &=& A - B\sqrt{d} - D\sqrt{-a-b\sqrt{d}} + C\sqrt{-a+b\sqrt{d}} \\
\pi^{\rho} &=& A + B\sqrt{d} - C\sqrt{-a-b\sqrt{d}} - D\sqrt{-a+b\sqrt{d}} \\
\pi^{\sigma\rho} &=& A - B\sqrt{d} + D\sqrt{-a-b\sqrt{d}} - C\sqrt{-a+b\sqrt{d}}
\end{eqnarray}

We obtain the following result on the prime factors of $[\OK:\Z[\pi,\bar{\pi}]]$.

\begin{prop}
Let $\pi$ be a Weil $p$-number in the cyclic quartic CM field $K=\Q(\sqrt{-a-b\sqrt{d}})$ in the form (\ref{eq:form-of-pi-d-1-mod-4}) or (\ref{eq:form-of-pi-d-2-3-mod-4}). Then
\begin{enumerate}
\item[(i)] $B$ divides $[\OK:\Z[\pi,\bar{\pi}]]$;
\item[(ii)] If $l>\disc(K)$ is a prime that divides $cC^2-2bCD-cD^2$, then $l$ divides $[\OK:\Z[\pi,\bar{\pi}]]$.
\end{enumerate}
\end{prop}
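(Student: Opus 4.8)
The plan is to deduce everything from the Corollary above. Writing
\[
T_1 = \Tr\big((\pi-\pi^{\sigma})(\pi-\pi^{\sigma\rho})\big), \qquad T_2 = \Tr\big(\pi\pi^{\sigma}(\pi-\pi^{\rho})(\pi^{\sigma}-\pi^{\sigma\rho})\big),
\]
the Corollary gives $\disc(\Z[\pi,\bar{\pi}])=T_1^{\,2}T_2$, and hence $[\OK:\Z[\pi,\bar{\pi}]]^2=T_1^{\,2}T_2/\disc(K)$. The whole proof then reduces to computing the two rational integers $T_1$ and $T_2$ explicitly in terms of $A,B,C,D$ and reading off their prime factors.

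First I would substitute the four displayed conjugate expressions (in whichever of the cases $d\equiv 1$ or $d\equiv 2,3\bmod 4$ applies) and simplify using $\eta^2=-a-b\sqrt{d}$, $\eta'^2=-a+b\sqrt{d}$ and, decisively, $\eta\eta'=\sqrt{a^2-b^2d}=c\sqrt{d}$, the last being exactly the cyclicity hypothesis $a^2-b^2d=c^2d$. The computation is organized around the vanishing traces $\Tr(\sqrt{d})=\Tr(\eta)=\Tr(\eta')=\Tr(\sqrt{d}\,\eta)=\Tr(\sqrt{d}\,\eta')=0$, with $\Tr(1)=4$ and $\Tr(d)=4d$. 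Setting $u=C+D$, $v=C-D$, one sees that $\pi-\pi^{\sigma}$ and $\pi-\pi^{\sigma\rho}$ merely interchange the roles of $u,v$ in their $\eta,\eta'$-coefficients, so in the product every term carrying an $\eta$, an $\eta'$ or a bare $\sqrt{d}$ has zero trace and only the rational part survives, giving $T_1=\lambda B^2 d$ with $\lambda\in\{4,16\}$. For $T_2$ I would first note that $(\pi-\pi^{\rho})(\pi^{\sigma}-\pi^{\sigma\rho})=4(cC^2-2bCD-cD^2)\sqrt{d}$ (the sign of the middle term being a harmless artifact of the labeling of $\eta'$ and of $D\mapsto -D$), and then evaluate $\Tr(\pi\pi^{\sigma}\sqrt{d})$: the $\sqrt{d}$-coefficient of $\pi\pi^{\sigma}$ is again $cC^2-2bCD-cD^2$, and only this term survives, so $T_2=\mu(cC^2-2bCD-cD^2)^2 d$ with $\mu$ a power of $2$.

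With $T_1=\lambda B^2 d$ and $T_2=\mu Q^2 d$, where $Q=cC^2-2bCD-cD^2$, both assertions follow by an $\ell$-adic valuation argument on $[\OK:\Z[\pi,\bar{\pi}]]^2=T_1^{\,2}T_2/\disc(K)$. For (ii): if $l>\disc(K)$ is a prime dividing $Q$ then $l\nmid\disc(K)$, while $d\mid\disc(K)$ and $\mu$ a power of $2$ give $l\nmid d$ and $l\nmid\mu$; hence $v_l(T_2)=2v_l(Q)\ge 2$ and $v_l([\OK:\Z[\pi,\bar{\pi}]]^2)=2v_l(T_1)+v_l(T_2)\ge 2$, so $l$ divides the index. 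For (i): the factor $B^2$ in $T_1$ contributes $B^4$ to $\disc(\Z[\pi,\bar{\pi}])$, and since $\disc(K)$ is a fixed invariant of the field, independent of the coordinates of the Weil number, it cannot absorb this $B$-part; taking the square root then leaves a factor $B$ in the index.

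The main obstacle I expect is the explicit evaluation of $T_2$, which, unlike $T_1$, forces one to expand the full quartic product $\pi\pi^{\sigma}$ and track exactly which monomials survive multiplication by $(\pi-\pi^{\rho})(\pi^{\sigma}-\pi^{\sigma\rho})$ and the final trace; it is here that $\eta\eta'=c\sqrt{d}$ does the essential work of producing the clean form $cC^2-2bCD-cD^2$. A secondary subtlety is in (i): one must check prime-by-prime that the fixed valuations $v_\ell(\disc(K))$ never exceed the surplus provided by the $B^4$ and $d$ factors of $\disc(\Z[\pi,\bar{\pi}])$, which is what guarantees the factor $B$ persists after dividing by $\disc(K)$ and taking the square root.
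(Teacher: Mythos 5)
Your treatment of part (ii) is essentially the paper's: both compute $(\pi-\pi^{\rho})(\pi^{\sigma}-\pi^{\sigma\rho})=4\sqrt{d}\,(cC^2\pm 2bCD-cD^2)$ using $\eta\eta'=\pm c\sqrt{d}$ (the cyclicity condition) and then conclude from the discriminant formula that a prime $l>\disc(K)$ dividing $Q=cC^2-2bCD-cD^2$ must divide the index; your valuation bookkeeping is, if anything, slightly more explicit than the paper's.

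Part (i) is where you diverge, and where there is a genuine gap. The paper does not go through the discriminant at all: it uses the tower $\Z[\pi,\bar{\pi}]\subseteq\Z[\pi,\bar{\pi},w]\subseteq\OK$ (with $w=\sqrt{d}$ or $\tfrac{1+\sqrt{d}}{2}$) and multiplicativity of the index, observing that $B$ already divides the index of the first inclusion; no information about $\disc(K)$ is needed. Your route instead extracts the factor $B^4$ from $T_1^2$ and asserts that $\disc(K)$, being ``a fixed invariant of the field, independent of the coordinates of the Weil number, cannot absorb this $B$-part.'' That sentence is not an argument: $B$ varies with $\pi$ and can perfectly well be divisible by a prime $\ell$ with $v_\ell(\disc(K))>0$ (any prime factor of $d$, or $\ell=2$, for instance), in which case division by $\disc(K)$ does eat into the $\ell$-part of $T_1^2T_2$. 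The conclusion can still be rescued --- one checks for each $\ell\mid B$ that $v_\ell(T_1^2T_2)-v_\ell(\disc(K))\geq 2v_\ell(B)$, using the extra factors $d^3$ and the powers of $2$ present in $T_1^2T_2$ together with bounds on $v_\ell(\disc(K))$ for a quartic field (and the evenness of $v_\ell$ of a perfect square) --- but you explicitly defer exactly this check, so as written part (i) is not proved. Two smaller points: in the $d\equiv 1\bmod 4$ case one gets $T_1=4B^2(1+d)$ rather than $\lambda B^2 d$ (harmless, since the $B^2$ survives), and you should be aware that the paper's tower argument gives (i) with no discriminant estimates at all, which is why it is the cleaner route here.
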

\begin{proof}
First we observe that $\Z[\pi,\bar{\pi}]\subseteq\Z[\pi,\bar{\pi},\sqrt{d}]\subseteq\OK$ if $d\equiv 2,3\mod{4}$, and $\Z[\pi,\bar{\pi}]\subseteq\Z[\pi,\bar{\pi},\frac{1+\sqrt{d}}{2}]\subseteq\OK$ if $d\equiv 1\mod{4}$. Note that in both cases we have $B\mid \Z[\pi,\bar{\pi}]\subseteq\Z[\pi,\bar{\pi},\sqrt{d}]$ and $B\mid \Z[\pi,\bar{\pi}]\subseteq\Z[\pi,\bar{\pi},\frac{1+\sqrt{d}}{2}]$ respectively. Hence $B$ divides $[\OK:\Z[\pi,\bar{\pi}]]$.

Next, for part (ii), we compute the factor $(\pi-\pi^{\rho})(\pi^{\sigma}-\pi^{\sigma\rho})$ of $\disc(\Z[\pi,\bar{\pi}])$. From the formulas for the conjugates of $\pi$ we derive that
\begin{eqnarray*}
\pi-\pi^{\rho} &=& 2C\sqrt{-a-b\sqrt{d}} + 2D\sqrt{-a+b\sqrt{d}} \\
\pi^{\sigma}-\pi^{\sigma\rho} &=& -2D\sqrt{-a-b\sqrt{d}} + 2C\sqrt{-a+b\sqrt{d}}
\end{eqnarray*}
Hence
\begin{eqnarray*}
(\pi-\pi^{\rho})(\pi^{\sigma}-\pi^{\sigma\rho}) &=& \left(2C\sqrt{-a-b\sqrt{d}} + 2D\sqrt{-a+b\sqrt{d}}\right)\left(-2D\sqrt{-a-b\sqrt{d}} + 2C\sqrt{-a+b\sqrt{d}}\right) \\
&=& 4\sqrt{d}(cC^2-2bCD-cD^2).
\end{eqnarray*}
If $l$ is a prime that divides $cC^2-2bCD-cD^2$ but does not divide $\disc(K)$, then $l$ must be a factor of $\disc(\Z[\pi,\bar{\pi}])/\disc(K)$ and thus $l$ divides $[\OK:\Z[\pi,\bar{\pi}]]$. This completes the proof of (ii).
\end{proof}

We then have the following result from the above proposition on whether a curve with maximal CM by a cyclic quartic field $K$ is almost isolated.

\begin{prop}
Let $C$ be a genus two curve defined over $\F_p$ with maximal CM by a cyclic quartic field $K$. Assume that the Weil $p$-numbers in $K$ is of the form (\ref{eq:form-of-pi-d-1-mod-4}) or (\ref{eq:form-of-pi-d-2-3-mod-4}), depending on whether $d\equiv 1\mod{4}$ or $d\equiv 2,3\mod{4}$. Suppose $B=\pm 1$ and $cC^2-2bCD-cD^2$ is a large prime. Then $C$ is almost isolated.
\end{prop}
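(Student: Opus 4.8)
The plan is to read the large prime factor of the index off the preceding proposition and then match the resulting factorization against the description of almost isolated curves given in Section~2. Write $l := cC^2 - 2bCD - cD^2$. Since $l$ is a large prime, in particular $l > \disc(K)$ (a fixed constant), part (ii) of the preceding proposition applies and gives $l \mid [\OK:\Z[\pi,\bar\pi]]$; the hypothesis $B = \pm 1$ makes the divisor supplied by part (i) trivial, so no extra factor is forced. Set $m := [\OK:\Z[\pi,\bar\pi]]/l$, so that $[\OK:\Z[\pi,\bar\pi]] = lm$, and the goal becomes to see that this is an almost-prime factorization in the sense of Section~2.

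Next I would run the order-theoretic dichotomy underlying almost isolatedness. Let $\OO$ be any order with $\Z[\pi,\bar\pi] \subseteq \OO \subseteq \OK$ and factor $[\OK:\Z[\pi,\bar\pi]] = [\OK:\OO]\,[\OO:\Z[\pi,\bar\pi]]$. Because $l$ is prime and (as argued below) $l \nmid m$, the factor $l$ lands in exactly one of the two factors. If $l \mid [\OK:\OO]$, then by the proposition of Section~2 asserting that every prime dividing $[\OK:\OO]$ also divides the degree of the corresponding vertical isogeny, the conductor gap between $C$ and any curve with endomorphism ring $\OO$ is at least $l$, so such curves are far from $C$. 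Otherwise $l \mid [\OO:\Z[\pi,\bar\pi]]$, which forces $[\OK:\OO] \mid m$; these are the only endomorphism classes that can fail to have a large conductor gap with $C$.

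It then remains to control $m$, and this is the main obstacle. To show $m$ is smooth and small (and in particular $l \nmid m$, which legitimizes the dichotomy above), I would return to the discriminant formula of Lemma~\ref{thm:disc-of-pi-cyclic} together with $[\OK:\Z[\pi,\bar\pi]]^2 = \disc(\Z[\pi,\bar\pi])/\disc(K)$. The computation in the proof of the preceding proposition already yields $(\pi-\pi^\rho)(\pi^\sigma-\pi^{\sigma\rho}) = 4\sqrt d\,l$, so the factor $\Tr(\pi\pi^\sigma(\pi-\pi^\rho)(\pi^\sigma-\pi^{\sigma\rho}))$ contributes the prime $l$ linearly, while the remaining factor $[\Tr((\pi-\pi^\sigma)(\pi-\pi^{\sigma\rho}))]^2$ depends only on $B$ and on the fixed field data. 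Carrying out this second trace explicitly with $B = \pm 1$, exactly as in the special cyclic case where the index came out to be $\tfrac14 B^2\abs{cC^2-2bCD-cD^2}$, should express $m$ as a bounded product of the small primes dividing $\disc(K)$ together with the denominators introduced by the non-integral basis, so that $m$ is smooth and $m < l$. This size estimate, rather than the extraction of $l$, is the delicate point, since a priori the cofactor of a single identified prime factor of an index need not be small.

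Finally, I would assemble the conclusion using the class number formula recalled in Section~2: an endomorphism class with order $\OO$ contains a number of isomorphism classes that grows with the index $[\OK:\OO]$. Since the only endomorphism classes without a large conductor gap from $C$ satisfy $[\OK:\OO] \mid m$ with $m$ small, there are only finitely many such classes and each is comparatively small, while the classes $\OO$ with $l \mid [\OK:\OO]$ (which include the large classes near $\Z[\pi,\bar\pi]$) all sit at conductor gap at least $l$. Hence $C$ has large conductor gap with all but a small number of the isomorphism classes in its isogeny class, which is precisely the assertion that $C$ is almost isolated.
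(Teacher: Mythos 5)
Your proposal is correct and follows essentially the same route as the paper: the paper's own proof simply observes that, with $B=\pm 1$, everything in $[\OK:\Z[\pi,\bar{\pi}]]$ other than the prime $l=\abs{cC^2-2bCD-cD^2}$ is controlled by the discriminant of the basis $\set{1,\sqrt{d},\sqrt{-a-b\sqrt{d}},\sqrt{-a+b\sqrt{d}}}$ relative to $\OK$, which is a fixed smooth number (at most $2^{12}$ up to primes dividing $d$), so the index is $l$ times a small smooth cofactor. Your explicit computation of the two trace factors (the first equal to $16B^2d$, the second to $16d\,l^2$) is exactly the calculation that justifies the paper's asserted bound, and your order-theoretic dichotomy and class-number remarks make explicit the Section~2 discussion that the paper's one-line proof leaves implicit.
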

\begin{proof}
Note that the discriminant of the basis $\set{1,\sqrt{d},\sqrt{-a-b\sqrt{d}},\sqrt{-a+b\sqrt{d}}}$ in $\OK$ is at most $2^{12}$, which is a smooth and small (compared to $2^{80}$) number. Hence by assumption $[\OK:\Z[\pi,\bar{\pi}]]$ has no large prime factor other than $l$. Hence $C$ is almost isolated.
\end{proof}

\

\section{Non-normal Extensions}

Let $K/\Q$ be a non-normal quartic imaginary extension. Let $\rho$ denote the complex conjugation. Then $K=K^{\rho}$, so the normal closure of $K$, say $L$, has degree 8 over $\Q$, and $\Gal(L/\Q)\approx D_8$, the dihedral group of order 8.  Basic finite group theory tells us that the center of the order 8 dihedral group is isomorphic to $\Z/2\Z$, and there are 4 other non-normal subgroups of order 2, forming 2 pairs, each with 2 conjugate subgroups.

Since $K$ is of degree 4 over $\Q$, $K$ has four embeddings into $\C$. As $K$ is fixed by the complex conjugation $\rho$, we may group these four embeddings into two pairs: $\set{\phi_1,\phi_2}$, $\set{\overline{\phi_1},\overline{\phi_2}}$ such that $\overline{\phi_i}=\phi_i\rho$ for $i=1,2$. We have immediately $\phi_i(K) = \overline{\phi_i}(K)$. In fact there are two ways to form the 4 embeddings into two such pairs. We say that a pair of embeddings $\Phi=\set{\phi_1,\phi_2}$ is a CM type if $\phi_1\neq\phi_2\rho$. Thus a pair of embeddings $\set{\phi_1,\phi_2}$ is a CM type for a non-normal extension $K$ if and only if $\phi_1(K)\neq\phi_2(K)$. We now define the \emph{reflex} of $K$ following \cite{Lang1} and \cite{Shimura}.

\begin{defn}
Let $K$ be a non-normal degree 4 CM field. Then the following two fields are identical; it is called the reflex field of $K$ with respect to the CM type $\Phi=\set{\phi_1,\phi_2}$.
\begin{enumerate}
\item $\Q(\set{\phi_1(x)+\phi_2(x):x\in K})$;
\item $\Q(\set{\phi_1(x)\phi_2(x):x\in K})$.
\end{enumerate}
\end{defn}

As $L$ is dihedral of degree 8 over $\Q$, we may assume that the Galois group $\Gal(L/\Q)$ is generated by $\sigma$ and $\tau$, where $\sigma$ is an element of order 4 and $\tau$ an element of order 2 with $\tau^{-1}\sigma\tau = \sigma^{-1}$. It is possible to choose $\sigma$ such that $\sigma(\phi_1(K))=\phi_2(K)$ for a given CM type $\set{\phi_1,\phi_2}$ of $K$. Let $K_0$ be the maximal real subfield of $K_1=\phi_1(K)$. We assume that $K_0=\Q(\sqrt{d})$ for some square-free positive integer $d$. Note that there exists a non-zero element $x\in K_1$ such that $x^2\in K_0$. This $x^2$ must be totally negative as $K_1$ is a totally imaginary extension. Without loss of generality we may assume $x$ to be an integer in $K_1$ and we may thus write $x = \sqrt{-a-b\sqrt{d}}$ for some $a,b\in\Z$ with $a>0$, and $a^2-b^2d>0$. We may also assume that $\gcd(a,b)$ is square-free and $d\nmid\gcd(a,b)$. Further note that $\set{1,\sqrt{d},\sqrt{-a-b\sqrt{d}},\sqrt{-ad-bd\sqrt{d}}}$ is a $\Q$-basis for $K_1$.

The action of $\sigma$ takes $\sqrt{d}$ to $-\sqrt{d}$ and by our assumption that $\phi_2=\sigma\phi_1$, $\sigma$ maps $\sqrt{-a-b\sqrt{d}}$ to $\sqrt{-a+b\sqrt{d}}$. Hence $\set{1,\sqrt{d},\sqrt{-a+b\sqrt{d}},\sqrt{-ad+bd\sqrt{d}}}$ is a $\Q$-basis for $K_2=\phi_2(K)$.

We now determine the structure of the reflex field $K_1^r$ of $K$ with respect to the CM type $\set{\phi_1,\phi_2}$, say, $K_1^r$. As $\sqrt{-a-b\sqrt{d}}\,\sigma(\sqrt{-a-b\sqrt{d}})=\sqrt{a^2-b^2d}\in K_1^r$, the maximal real subfield $K_0^r$ of $K_1^r$ is $\Q(\sqrt{a^2-b^2d})$. By our assumption $a^2-b^2d$ is positive, square-free. Also note that $\sqrt{a^2-b^2d}$ is not a rational multiple of $\sqrt{d}$, since $\gcd(a,b)$ is not divisible by $d$ by assumption. Observe that
$$\sqrt{-a-b\sqrt{d}}+\sigma(\sqrt{-a-b\sqrt{d}}) = \sqrt{-2a + 2\sqrt{a^2-b^2d}}\in K_1^r.$$
Thus $\set{1,\sqrt{a^2-b^2d},\sqrt{-2a + 2\sqrt{a^2-b^2d}},\sqrt{a^2-b^2d}\sqrt{-2a + 2\sqrt{a^2-b^2d}}}$ is a $\Q$-basis for $K_1^r$.

Since $\set{\phi_1,\phi_2\rho}$ is the other CM-type not conjugate to $\set{\phi_1,\phi_2}$, we obtain another reflex field $K_2^r$ of $K_1$ with respect to $\set{\phi_1,\phi_2\rho}$. Observe that for any $x\in K$, we have $\phi_2\rho(x) = \sigma\rho(\phi_1(x))$. Hence
$$\sqrt{-a-b\sqrt{d}}+\sigma\rho(\sqrt{-a-b\sqrt{d}}) = \sqrt{-2a - 2\sqrt{a^2-b^2d}}\in K_2^r.$$
Thus $\set{1,\sqrt{a^2-b^2d},\sqrt{-2a - 2\sqrt{a^2-b^2d}},\sqrt{a^2-b^2d}\sqrt{-2a - 2\sqrt{a^2-b^2d}}}$ is a $\Q$-basis for $K_2^r$.

Note that $K_1,K_2,K_1^r,K_2^r$ are all non-normal extensions. The only normal quartic subfield of $L$ is the biquadratic real extension $\Q(\sqrt{d},\sqrt{a^2-b^2d})$.

Then we have the following descriptions of the non-trivial subfields of $L$.

\begin{enumerate}
\item $L$ has three quadratic subfields, all of which are real subfields. They are
\begin{itemize}
  \item $K_0 = \Q(\sqrt{d})$;
  \item $K_0^r = \Q(\sqrt{a^2-b^2d})$;
  \item $K_0^{rr} = \Q(\sqrt{a^2d-b^2d^2})$.
\end{itemize}
\item $L$ has five quartic subfields. They are
\begin{itemize}
\item $L_0 = \Q(\sqrt{d},\sqrt{a^2-b^2d})$, which is a normal extension and the maximal real subfield of $L$ and contains all three real quadratic subfields of $L$;
\item $K_1 = \Q(\sqrt{-a-b\sqrt{d}})$;
\item $K_2 = \Q(\sqrt{-a+b\sqrt{d}})$;
\item $K_1^r = \Q(\sqrt{-2a-2\sqrt{a^2-b^2d}})$;
\item $K_2^r = \Q(\sqrt{-2a+2\sqrt{a^2-b^2d}})$.
\end{itemize}
\end{enumerate}

We give an explicit example of these fields. The following example is used in \cite{McGuire} for the construction of $p$-rank 1 genus two curves by the CM method.

\begin{example}
Let $K=Q[X]/(X^4+34X^2+217)$, which is not Galois over $\Q$. Note that explicitly, up to a fixed embedding of $K$ into $\C$, the four roots are $\alpha_1=\sqrt{-17-6\sqrt{2}}$, $\alpha_2=-\sqrt{-17-6\sqrt 2}$, $\beta_1=\sqrt{-17+6\sqrt 2}$, and $\beta_2=-\sqrt{-17+6\sqrt 2}$. Note that $\alpha_1$ and $\alpha_2$, $\beta_1$ and $\beta_2$, respectively are complex conjugates of each other. This field $K$, however, is not normal. To see that $K$ is not Galois, note that the product of $\alpha_1$ and $\beta_1$ is $-\sqrt{217}$, which is a real number that is not in $K_0$, the real subfield of $K$. If we fix a root $\alpha_1$, then the other two embeddings not into $K$ map $\alpha_1$ to $\beta_1$ and $\beta_2$ respectively. Let $\sigma$ denote the map that takes $\alpha_1$ to $\beta_1$, and let $\rho$ denote complex conjugation. In this case, there are two choices of non-conjugate CM types, i.e., $\Phi_1=\{\id, \sigma\}$, and $\Phi_2\{\id, \sigma\rho\}$. The reflex field of $K$ with respect to $\Phi_1$ and $\Phi_2$ are respectively
$$K^r_1=\Q(\sqrt{-17+6\sqrt 2}+\sqrt{-17-6\sqrt{2}}, \sqrt{217}),$$
and
$$K^r_2=\Q(\sqrt{-17+6\sqrt 2}-\sqrt{-17-6\sqrt{2}}, \sqrt{217}).$$
\end{example}

\begin{rem}
In general, suppose we consider an irreducible polynomial $X^4+aX^2+b$ with integer coefficients $a,b\in\Z$ and both roots $r$ and $s$ of $X^2+aX+b$ are totally negative real numbers. Then the field $K=\Q[X]/(X^4+aX^2+b)$ is a quartic CM field with real subfield $K_0=\Q[X]/(X^2+aX+b)=\Q(r)=\Q(s)$. In addition, the four roots of $X^4+aX^2+b$ are obviously $\sqrt r, -\sqrt r, \sqrt s, -\sqrt s$, respectively. Using similar argument to those in the above example, it follows that any one of the reflex field contains $\Q(\sqrt r\sqrt s)=\Q(\sqrt rs)$ as a subfield, i.e., $\Q(\sqrt b)$ is $K^r_0$ if $b$ is not a square.

Also note that $X^4+aX^2+b$ has two ways of factorizations into quadratic polynomials, in $K_1^r$ and $K_2^r$ respectively:
\begin{eqnarray}
X^4+aX^2+b &=& (X^2+\sqrt{2\sqrt b-a}X+\sqrt b)(X^2-\sqrt{2\sqrt b-a}X+\sqrt b) \\
&=& (X^2+\sqrt{-2\sqrt b-a}X-\sqrt b)(X^2-\sqrt{-2\sqrt b-a}X-\sqrt b).
\end{eqnarray}
\end{rem}

We have the following lemma in the computation of the discriminant of a Weil $p$-number in a non-normal quartic extension $K/\Q$.

\begin{lem}
Let $\pi$ be a Weil $p$-number in a quartic cyclic extension $K/\Q$. Let $\sigma$ be a generator of $\Gal(K/\Q)$ and $\rho$ denote complex conjugation. Then
\begin{equation}
\disc(\pi) = p^2 [\Tr_{K/\Q}((\pi-\pi^{\sigma})(\pi-\pi^{\sigma\rho}))]^2\cdot\Tr_{K^r/\Q}(\pi\pi^{\sigma}(\pi-\pi^{\rho})(\pi^{\sigma}-\pi^{\sigma\rho})).
\end{equation}
\end{lem}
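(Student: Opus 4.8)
The plan is to treat this as the reflex-field refinement of Lemma~\ref{thm:disc-of-pi-cyclic} and to reduce it to that already-proven identity; the only new content, for cyclic $K$, is to justify that replacing the plain trace in the second factor by $\Tr_{K^r/\Q}$ changes nothing. I would begin exactly as in the proof of Lemma~\ref{thm:disc-of-pi-cyclic}, writing $\disc(\pi)$ as the squared Vandermonde in the four conjugates $\pi,\pi^{\sigma},\pi^{\rho},\pi^{\sigma\rho}$ and splitting the six pairwise differences into the two blocks $(\pi-\pi^{\sigma})(\pi^{\rho}-\pi^{\sigma})(\pi^{\sigma}-\pi^{\rho})(\pi^{\sigma\rho}-\pi)$ and $(\pi-\pi^{\rho})^2(\pi^{\sigma}-\pi^{\sigma\rho})^2$. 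For the first block I would copy the computation already carried out: since $\pi$ is a Weil $p$-number, $\pi\pi^{\rho}=p$, and applying $\sigma$ (which commutes with $\rho$ because $K/\Q$ is cyclic) gives $\pi^{\sigma}\pi^{\sigma\rho}=p$ as well, so the block collapses to $p\,\Tr_{K/\Q}\big((\pi-\pi^{\sigma})(\pi-\pi^{\sigma\rho})\big)$ and its square supplies the factor $p^{2}[\Tr_{K/\Q}((\pi-\pi^{\sigma})(\pi-\pi^{\sigma\rho}))]^{2}$.

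The substantive step concerns the second block and the appearance of $K^r$. Because $\Gal(K/\Q)=\langle\sigma\rangle$ is abelian, $\sigma$ and $\rho$ commute and $\pi^{\sigma\rho}=\sigma(\pi^{\rho})$; hence $\pi\pi^{\sigma}(\pi-\pi^{\rho})(\pi^{\sigma}-\pi^{\sigma\rho})=\big(\pi\,\sigma(\pi)\big)\big(y\,\sigma(y)\big)$ with $y=\pi-\pi^{\rho}$, a product of two elements of the form $x\,\sigma(x)$. By the definition of the reflex field with respect to the CM type $\{\id,\sigma\}$, every element $x\,\sigma(x)$ lies in $K^r$, so the whole second-block argument lies in $K^r$. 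For a cyclic quartic CM field the Galois closure is $K$ itself and the CM type is primitive, so its reflex $K^r$ is a degree-four subfield of $K$, forcing $K^r=K$. Consequently $[K:K^r]=1$ and $\Tr_{K/\Q}$ and $\Tr_{K^r/\Q}$ agree on this element, which identifies the second block with $\Tr_{K^r/\Q}\big(\pi\pi^{\sigma}(\pi-\pi^{\rho})(\pi^{\sigma}-\pi^{\sigma\rho})\big)$. Multiplying the two blocks gives the asserted formula.

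The main obstacle is purely the bookkeeping that distinguishes the two traces: in general $\Tr_{K/\Q}$ and $\Tr_{K^r/\Q}$ of an element of $K^r$ differ by the factor $[K:K^r]$, so one must verify both that the second-block element really descends to $K^r$ and that this index equals $1$ in the cyclic situation. Here both points are immediate from $K^r=K$, which is precisely why the identity coincides with Lemma~\ref{thm:disc-of-pi-cyclic}; the $\Tr_{K^r/\Q}$ notation is retained only to display the formula in the uniform shape that will persist, now with $[K:K^r]>1$, for the non-normal extensions studied in the remainder of this section.
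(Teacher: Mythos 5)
Your reduction is internally consistent with the hypothesis as literally printed, and your key observation is sound: for a cyclic quartic CM field every CM type $\{\id,\sigma\}$ is primitive, the reflex is a quartic subfield of the Galois closure, and the closure is $K$ itself, so $K^r=K$ and the identity collapses to Lemma~\ref{thm:disc-of-pi-cyclic}. But this reading bypasses the lemma's actual content. The word ``cyclic'' in the statement is a slip carried over from Lemma~\ref{thm:disc-of-pi-cyclic}: this lemma sits in the section on non-normal extensions, where $\sigma$ is an order-$4$ element of $\Gal(L/\Q)\approx D_8$ for the degree-$8$ normal closure $L$, and it is applied immediately afterwards to non-normal $K$, where $(\pi-\pi^{\rho})(\pi^{\sigma}-\pi^{\sigma\rho})=4\sqrt{a^2-b^2d}\,(\cdots)$ lies in $K^r$ but \emph{not} in $K$. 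Accordingly, the paper's own proof declares the computational part identical to that of Lemma~\ref{thm:disc-of-pi-cyclic} and isolates as the only new content exactly the two membership claims that make the stated traces well defined: (1) $(\pi-\pi^{\sigma})(\pi-\pi^{\sigma\rho})=\pi^2-\pi(\pi^{\sigma}+\pi^{\sigma\rho})+p\in K$, since $\pi^{\sigma}+\pi^{\sigma\rho}\in K_0\subset K$; and (2) $\pi\pi^{\sigma}\in K^r$ and $(\pi-\pi^{\rho})(\pi^{\sigma}-\pi^{\sigma\rho})\in K^r$ by the defining description of the reflex field, so the second argument lies in $K^r$. Your proof establishes neither membership in the non-normal case, so it does not prove the lemma in the setting where the paper uses it; in the cyclic case, where you do prove it, the statement is literally Lemma~\ref{thm:disc-of-pi-cyclic} and nothing new is at stake.

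There is also a concrete error in your closing framing: the claim that ``$\Tr_{K/\Q}$ and $\Tr_{K^r/\Q}$ of an element of $K^r$ differ by the factor $[K:K^r]$,'' with the uniform shape persisting ``with $[K:K^r]>1$'' for non-normal $K$, is not meaningful there. For non-normal $K$, the fields $K$ and $K^r$ are two \emph{distinct} quartic fields, neither contained in the other (both sit inside the degree-$8$ field $L$), so $[K:K^r]$ is undefined and no relative-degree comparison of traces arises; the point is rather that the two trace arguments live in two different fields, each of degree $4$ over $\Q$, so both traces are genuine quartic traces equal to the four-term symmetric sums produced by the Vandermonde computation. The repair is straightforward: keep your observation that $x\,\sigma(x)\in K^r$ for $x\in K$ (this is exactly the paper's step (2), and it is valid verbatim for non-normal $K$ since $\phi_2=\sigma\phi_1$ on $K$), add the paper's step (1) for the first argument, and your $K^r=K$ argument then covers the cyclic case as a degenerate instance.
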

\begin{proof}
The computational part of the proof is the same as in the proof of Lemma \ref{thm:disc-of-pi-cyclic}. It then suffices to show that (1) $(\pi-\pi^{\sigma})(\pi-\pi^{\sigma\rho})\in K$; and (2) $\pi\pi^{\sigma}(\pi-\pi^{\rho})(\pi^{\sigma}-\pi^{\sigma\rho})\in K^r$.

To see that $(\pi-\pi^{\sigma})(\pi-\pi^{\sigma\rho})\in K$, note that
$$(\pi-\pi^{\sigma})(\pi-\pi^{\sigma\rho}) = \pi^2 - \pi(\pi^{\sigma}+\pi^{\sigma\rho}) + p.$$
Since $\pi^{\sigma}+\pi^{\sigma\rho}\in K_0\subset K$, we immediately have $(\pi-\pi^{\sigma})(\pi-\pi^{\sigma\rho})\in K$.

Also note that by definition of $K^r$, we have both $\pi\pi^{\sigma}\in K^r$ and $(\pi-\pi^{\rho})(\pi^{\sigma}-\pi^{\sigma\rho})\in K^r$, thus also $\pi\pi^{\sigma}(\pi-\pi^{\rho})(\pi^{\sigma}-\pi^{\sigma\rho})\in K^r$.
\end{proof}

To explicitly compute the discriminant of $\pi$ and the index of $\Z[\pi,\bar{\pi}]$ in $\OK$, we need the following expressions for $\pi$. If $d\equiv 1\mod{4}$, then we consider Weil $p$-numbers of the form
\begin{equation}\label{eq:form-of-pi-d-1-mod-4}
\pi = A + B\frac{1+\sqrt{d}}{2} + C\sqrt{-a-b\sqrt{d}} + D\frac{1+\sqrt{d}}{2}\sqrt{-a-b\sqrt{d}}.
\end{equation}
If $d\equiv 2,3\mod{4}$, then we consider Weil $p$-numbers of the form
\begin{equation}\label{eq:form-of-pi-d-2-3-mod-4}
\pi = A + B\sqrt{d} + C\sqrt{-a-b\sqrt{d}} + D\sqrt{d}\sqrt{-a-b\sqrt{d}}.
\end{equation}

We have the following result on the large prime factors of the index of $\Z[\pi,\bar{\pi}]$.

\begin{prop}
Suppose $l$ is an odd prime with $l>\disc(K)$ such that
\begin{enumerate}
\item[(i)] $l\mid C^2+CD+\frac{1-d}{4}D^2$ if $d\equiv 1\mod{4}$; or
\item[(ii)] $l\mid C^2-dD^2$ if $d\equiv 2,3\mod{4}$.
\end{enumerate}
Then $l$ divides $[\OK:\Z[\pi,\bar{\pi}]]$.
\end{prop}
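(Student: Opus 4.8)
The plan is to follow the cyclic-case argument verbatim, with the reflex trace of the preceding Lemma playing the role of $\Tr_{K/\Q}$. First I would record the four conjugates of $\pi$ under $\sigma$ and $\rho$ in both cases. Writing $\eta=\sqrt{-a-b\sqrt{d}}$ and letting $w=\frac{1+\sqrt{d}}{2}$ when $d\equiv 1\bmod 4$ and $w=\sqrt{d}$ when $d\equiv 2,3\bmod 4$, both parametrizations (\ref{eq:form-of-pi-d-1-mod-4}) and (\ref{eq:form-of-pi-d-2-3-mod-4}) take the uniform shape $\pi = A + Bw + (C+Dw)\eta$. Since $w$ is real and $\eta^2$ is totally negative (so $\eta$ is purely imaginary), complex conjugation fixes $w$ and negates $\eta$, while $\sigma$ sends $w\mapsto w^\sigma$ and $\eta\mapsto\eta^\sigma=\sqrt{-a+b\sqrt{d}}$. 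This gives at once
\[
\pi-\pi^\rho = 2(C+Dw)\eta, \qquad \pi^\sigma-\pi^{\sigma\rho} = 2(C+Dw^\sigma)\eta^\sigma.
\]

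Multiplying these, the relevant factor of the discriminant is
\[
(\pi-\pi^\rho)(\pi^\sigma-\pi^{\sigma\rho}) = 4(C+Dw)(C+Dw^\sigma)\,\eta\eta^\sigma,
\]
where $\eta\eta^\sigma=\sqrt{a^2-b^2d}$ lies in the reflex field, its real quadratic subfield $K_0^r=\Q(\sqrt{a^2-b^2d})$ having been identified earlier in this section. The rational integer
\[
Q := (C+Dw)(C+Dw^\sigma) = C^2 + (w+w^\sigma)CD + (ww^\sigma)D^2
\]
evaluates, using $w+w^\sigma=1,\ ww^\sigma=\tfrac{1-d}{4}$ in the first case and $w+w^\sigma=0,\ ww^\sigma=-d$ in the second, to exactly $C^2+CD+\frac{1-d}{4}D^2$ when $d\equiv 1\bmod 4$ and to $C^2-dD^2$ when $d\equiv 2,3\bmod 4$. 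Thus the quadratic forms appearing in (i) and (ii) are precisely this $Q$.

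Next I would feed $Q$ into the reflex-trace formula. Combining the preceding Lemma with $\disc(\Z[\pi,\bar\pi])=\disc(\pi)/p^2$ yields
\[
\disc(\Z[\pi,\bar\pi]) = \bigl[\Tr_{K/\Q}((\pi-\pi^\sigma)(\pi-\pi^{\sigma\rho}))\bigr]^2\cdot\Tr_{K^r/\Q}\bigl(\pi\pi^\sigma(\pi-\pi^\rho)(\pi^\sigma-\pi^{\sigma\rho})\bigr).
\]
Since $Q$ is rational and both $\pi\pi^\sigma$ and $\eta\eta^\sigma$ lie in $K^r$, the integer $Q$ factors out of the reflex trace:
\[
\Tr_{K^r/\Q}\bigl(\pi\pi^\sigma(\pi-\pi^\rho)(\pi^\sigma-\pi^{\sigma\rho})\bigr) = 4Q\,\Tr_{K^r/\Q}\bigl(\sqrt{a^2-b^2d}\,\pi\pi^\sigma\bigr),
\]
so $Q\mid\disc(\Z[\pi,\bar\pi])$. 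Writing $\disc(\Z[\pi,\bar\pi])=[\OK:\Z[\pi,\bar\pi]]^2\disc(K)$, any prime $l\mid Q$ with $l>\disc(K)$ cannot divide $\disc(K)$; hence $l\mid[\OK:\Z[\pi,\bar\pi]]^2$, and since $l$ is prime, $l\mid[\OK:\Z[\pi,\bar\pi]]$, which is the assertion.

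The main obstacle is the bookkeeping of the Galois action in the non-normal setting: here $\sigma$ is an element of $\Gal(L/\Q)\cong D_8$ and does \emph{not} act on $K$ itself, so I must check carefully that $\pi\pi^\sigma$ and $\eta\eta^\sigma$ genuinely land in the reflex field $K^r$ rather than merely in the normal closure $L$. This membership is exactly what makes $\Tr_{K^r/\Q}$ meaningful and keeps its value rational, and it is the one place where non-normality really intervenes; it is supplied by the explicit reflex-field description established earlier in this section. Everything else is the same conjugate computation and divisibility argument as in the cyclic case.
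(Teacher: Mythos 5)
Your proof is correct and follows essentially the same route as the paper: both compute the conjugates of $\pi$, obtain $(\pi-\pi^{\rho})(\pi^{\sigma}-\pi^{\sigma\rho})=4\sqrt{a^2-b^2d}\,Q$ with $Q$ equal to the quadratic form in (i) or (ii), and conclude via the reflex-trace discriminant formula that any prime $l\mid Q$ with $l>\disc(K)$ must divide the index. Your uniform treatment of the two cases via $w$, and your explicit remark that $Q$ being rational lets it factor out of $\Tr_{K^r/\Q}$, are minor tidyings of the paper's case-by-case computation rather than a different argument.
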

\begin{proof}
We shall first show that in case (i), $C^2+CD+\frac{1-d}{4}D^2$ is a factor of $(\pi-\pi^{\rho})^2(\pi-\pi^{\sigma})^2$, and is thus a factor of $\disc(\Z[\pi,\bar{\pi}])$. Without loss of generality we may assume that the action of $\sigma$ is given by
$$\sigma: \sqrt{d} \mapsto -\sqrt{d},\quad \sqrt{-a-b\sqrt{d}}\mapsto \sqrt{-a+b\sqrt{d}}.$$
Thus if $\pi$ takes the form (\ref{eq:form-of-pi-d-1-mod-4}), then
\begin{eqnarray}
\pi^{\rho} &=& A + B\sqrt{d} - C\sqrt{-a-b\sqrt{d}} - D\frac{1+\sqrt{d}}{2}\sqrt{-a-b\sqrt{d}} \\
\pi^{\sigma} &=& A - B\sqrt{d} + C\sqrt{-a+b\sqrt{d}} + D\frac{1-\sqrt{d}}{2}\sqrt{-a+b\sqrt{d}} \\
\pi^{\sigma\rho} &=& A - B\sqrt{d} - C\sqrt{-a+b\sqrt{d}} - D\frac{1-\sqrt{d}}{2}\sqrt{-a+b\sqrt{d}}
\end{eqnarray}
Hence
\begin{eqnarray*}
(\pi-\pi^{\rho})(\pi^{\sigma}-\pi^{\sigma\rho}) &=& \left(2 C \sqrt{-a-b\sqrt{d}} + 2D \frac{1+\sqrt{d}}{2}\sqrt{-a-b\sqrt{d}}\right)\cdot \\
& & \cdot\left(2 C \sqrt{-a+b\sqrt{d}} + 2D \frac{1-\sqrt{d}}{2}\sqrt{-a+b\sqrt{d}}\right) \\
&=& 4\sqrt{a^2-b^2d}(C^2+CD+\frac{1-d}{4}D^2).
\end{eqnarray*}
Note that $l\mid(C^2+CD+\frac{1-d}{4}D^2)$ and $l\nmid\disc(K)$, then $l$ must be a factor of $\disc(\Z[\pi,\bar{\pi}]/\disc(K)$, which shows that $l\mid[\OK:\Z[\pi,\bar{\pi}]]$.

Next we treat the case (ii). If $\pi$ takes the form (\ref{eq:form-of-pi-d-2-3-mod-4}), then
\begin{eqnarray}
\pi^{\rho} &=& A + B\sqrt{d} - C\sqrt{-a-b\sqrt{d}} - D\sqrt{d}\sqrt{-a-b\sqrt{d}} \\
\pi^{\sigma} &=& A - B\sqrt{d} + C\sqrt{-a+b\sqrt{d}} - D\sqrt{d}\sqrt{-a+b\sqrt{d}} \\
\pi^{\sigma\rho} &=& A - B\sqrt{d} - C\sqrt{-a+b\sqrt{d}} + D\sqrt{d}\sqrt{-a+b\sqrt{d}}.
\end{eqnarray}
These expressions give
\begin{eqnarray*}
\pi - \pi^{\rho} &=& 2 C \sqrt{-a-b\sqrt{d}} + 2D\sqrt{d}\sqrt{-a-b\sqrt{d}} \\
\pi^{\sigma} - \pi^{\sigma\rho} &=& 2 C \sqrt{-a+b\sqrt{d}} - 2D\sqrt{d}\sqrt{-a+b\sqrt{d}}.
\end{eqnarray*}
Therefore
\begin{eqnarray*}
(\pi-\pi^{\rho})(\pi^{\sigma}-\pi^{\sigma\rho}) &=& \left(2 C \sqrt{-a-b\sqrt{d}} + 2D\sqrt{d}\sqrt{-a-b\sqrt{d}}\right) \\
& & \left(2 C \sqrt{-a+b\sqrt{d}} - 2D\sqrt{d}\sqrt{-a+b\sqrt{d}}\right) \\
&=& 4\sqrt{a^2-b^2d}(C^2-dD^2).
\end{eqnarray*}
Thus we finish the proof by a similar argument as in case (i).
\end{proof}

\

\section{Weil $p$-numbers with prime conductor in quartic cyclic extensions}

Let $K/\Q$ be a totally imaginary quartic extension, denote by $K_0$ its real subextension, and let $\pi$ be a Weil $p$-number in $K$, where $p$ is an odd prime. In the previous sections, we have shown that if $I = [\OK:\Z[\pi,\bar{\pi}]]$ is a prime number larger than 80 bits, then the corresponding Jacobian of the genus two curve is isolated. In this section we derive a heuristic prediction for the asymptotic distribution of Weil $p$-numbers such that both $p$ and $I$ are prime numbers. In order to compute the index $I$, we need to assume that $K$ has integral basis of the form shown in section 3. According to \cite{Spearman1, Spearman2, Spearman3}, the following assumptions ensure that $K$ has the desired form of integral basis.
\begin{enumerate}
\item[(i)] $K_0=\Q(\sqrt{d})$, where $d\equiv 5\mod{8}$, and $K=\Q\left(\sqrt{-a-b\sqrt{d}}\right)$ for some $a,b\in\Z$.
\item[(ii)] Any prime number $l\in\Z$ that ramifies in $K/\Q$ also ramifies in $K_0/\Q$.
\item[(iii)] As $\Z$-modules,
\begin{eqnarray*}
 \OK &\approx& \Z\oplus\Z\frac{1}{2}\left(1+\sqrt{d}\right)\oplus\Z\frac{1}{4}\left(1+\sqrt{d}+\sqrt{-a-b\sqrt{d}}+\epsilon\sqrt{-a+b\sqrt{d}}\right) \\
& & \oplus\Z\frac{1}{4}\left(1+\sqrt{d}-\sqrt{-a-b\sqrt{d}}-\epsilon\sqrt{-a+b\sqrt{d}}\right),
\end{eqnarray*}
    with $\epsilon=1$ or $-1$.
\end{enumerate}

Note that by (1), as $K/Q$ is a cyclic extension, we have $a^2=(b^2+c^2)d$ for some positive integer $c$. Since $d$ is square-free, we have $d\mid a$ and we may write $a=a_0d$. For the simplest cases we note the following result.

\begin{prop}\label{thm:condition-on-K}
Suppose $a_0=1$, $b\equiv 2\mod{4}$, $d\equiv 5\mod{8}$, then the above conditions (i)-(iii) are satisfied for $K=\Q\left(\sqrt{-a-b\sqrt{d}}\right)$.
\end{prop}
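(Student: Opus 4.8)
The plan is to extract the arithmetic content of the three hypotheses and then verify conditions (i)--(iii) in turn. Since $d$ is square-free, the cyclicity relation $a^2=(b^2+c^2)d$ forces $d\mid a$; writing $a=a_0d$ and using $a_0=1$ gives $a=d$, and hence $d=b^2+c^2$. From $b\equiv 2\pmod 4$ we get $b^2\equiv 4\pmod 8$, so the assumption $d\equiv 5\pmod 8$ forces $c^2\equiv 1\pmod 8$, i.e. $c$ is odd; in particular $\gcd(b,c)$ is odd. Condition (i) is then immediate: $d\equiv 5\pmod 8$ is assumed, $K_0=\Q(\sqrt d)$ is the maximal real subfield, and $K=\Q(\sqrt{-a-b\sqrt d})$.

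The substance is condition (iii). I would put $K$ into the canonical normal form for cyclic quartic fields of \cite{Spearman1,Spearman2,Spearman3}, namely $K=\Q(\sqrt{A(D+B\sqrt D)})$, by taking $A=-1$, $B=b$, $D=d$, so that $-a-b\sqrt d=A(D+B\sqrt D)$ and $D=B^2+c^2$ is square-free with $A$ odd and $\gcd(A,D)=1$; one also checks $D\pm B\sqrt D>0$, so $K$ is totally imaginary. In those references the shape of the integral basis is governed by the residues $D\bmod 8$ and $B\bmod 2$, together with the parity normalisations on $A$. The hypotheses $D\equiv 5\pmod 8$ and $B\equiv 2\pmod 4$ place us in precisely the case whose integral basis is the displayed set in (iii), with the sign $\eps=\pm 1$ pinned down by the residue of $B$ (equivalently of $A$) modulo $4$; one then verifies that all normalisation conditions of the cited theorem hold and reads off the basis. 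A self-contained alternative is to set $\omega=\tfrac{1+\sqrt d}{2}$ and $\gamma_{\pm}=\tfrac14\bigl(1+\sqrt d\pm(\sqrt{-a-b\sqrt d}+\eps\sqrt{-a+b\sqrt d})\bigr)$, verify $\gamma_{\pm}\in\OK$ by computing their characteristic polynomials and confirming integrality of the relevant traces and norms, and then show $[\OK:\Z+\Z\omega+\Z\gamma_{+}+\Z\gamma_{-}]=1$ by matching $\disc$ of this lattice with $\disc(K)$.

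Condition (ii) follows once (iii) is in hand. The discriminant lemma of Section 3 then gives $\disc(K)=a^2d=d^3$, so every prime ramifying in $K/\Q$ divides $d^3$, hence divides $d$; since $d\equiv 1\pmod 4$ gives $\disc(K_0)=d$, any such prime ramifies in $K_0/\Q$, which is (ii). The only prime requiring separate attention is $2$, as $2\nmid d$: here $d\equiv 5\pmod 8$ makes $2$ inert in $K_0$, and $b\equiv 2\pmod 4$ is exactly the congruence that keeps $2$ out of $\disc(K)$.

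The main obstacle is the ramification bookkeeping hidden in (iii). A naive estimate of the relative discriminant of $K/K_0$, using $N_{K_0/\Q}(-a-b\sqrt d)=a^2-b^2d=c^2d$, only confines ramification to primes dividing $2cd$, whereas we need it confined to primes dividing $d$ alone. Showing that primes dividing $c$ (and the prime $2$) do not in fact ramify relies on the cancellation produced by $a=d$ and $d=b^2+c^2$ together with the cyclicity of $K$; this is precisely the delicate point that the congruences $a_0=1$, $b\equiv 2\pmod 4$, $d\equiv 5\pmod 8$ are designed to secure, and it is the technical heart of the Spearman--Williams integral-basis computation that I would invoke.
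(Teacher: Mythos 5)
Your proposal is correct and follows essentially the same route as the paper, which offers no argument of its own beyond deferring the integral-basis computation to the cited Spearman--Williams papers; you reduce to the same normal form $\Q(\sqrt{A(D+B\sqrt D)})$ with $A=-1$, $B=b$, $D=d=b^2+c^2$ and invoke the same results for condition (iii). Your added verifications of (i) and of (ii) via $\disc(K)=a^2d=d^3$ are sound and in fact supply more detail than the paper does.
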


For a complete proof and detailed discussion, see \cite{Spearman1, Spearman2, Spearman3}.

In the sequel we shall assume that the condition of Proposition \ref{thm:condition-on-K} is satisfied. Thus $a_0=1$ and $d=b^2+c^2$. Since $\pi$ is a Weil $p$-number, by (iii) we may write $\pi$ in the form

\begin{equation}
\pi = \frac{1}{4}\left(A + B\sqrt{d} + C\sqrt{-d-b\sqrt{d}} + D\sqrt{-d+b\sqrt{d}}\right),
\end{equation}
where $A,B,C,D$ are integers satisfying the equations

\begin{eqnarray}
p &=& \frac{1}{16}(A^2 + B^2d + C^2d + D^2d) \\
AB &=& -C^2\frac{b}{2} - CDc + D^2\frac{b}{2} \\
I &=& \frac{1}{4}\abs{B(C^2c -2CDb - D^2c)}
\end{eqnarray}

If we want both $p$ and $I$ to be prime numbers, we need to take $B=\pm1$. The above equations then become

\begin{eqnarray}
\label{eq:p} p &=& \frac{1}{16}\left(\left(C^2\frac{b}{2} + CDc - D^2\frac{b}{2}\right)^2 + C^2d + D^2d + d\right), \\
\label{eq:I} I &=& \frac{1}{4}\abs{C^2c-2CDb - D^2c}.
\end{eqnarray}

\begin{lem}
In order that $p$ and $I$ are both integers, $C$ and $D$ need to be odd integers.
\end{lem}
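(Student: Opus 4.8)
The plan is to reduce the claim to an elementary parity analysis of the two integrality conditions \eqref{eq:p} and \eqref{eq:I}, using only the standing hypotheses $d\equiv 5\pmod 8$, $b\equiv 2\pmod 4$ together with the relation $d=b^2+c^2$. First I would record the parities forced by these hypotheses: since $d$ is odd and $b$ is even, $c^2=d-b^2$ is odd, so $c$ is odd; and $b\equiv 2\pmod 4$ means $b/2$ is an odd integer. These three facts ($c$ odd, $b/2$ odd, $d$ odd) are the only arithmetic input the argument needs, and everything afterward is a congruence computation mod suitable powers of $2$.

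Next I would use \eqref{eq:I} to force $C$ and $D$ to have the same parity. Because $b$ is even we have $2CDb\equiv 0\pmod 4$, so the condition $I\in\Z$, i.e. $4\mid C^2c-2CDb-D^2c$, is equivalent to $4\mid c(C^2-D^2)$; and since $c$ is odd this is equivalent to $4\mid C^2-D^2=(C-D)(C+D)$. Checking the parity patterns, $C^2-D^2$ is odd when $C,D$ have opposite parity and is divisible by $4$ when they share a parity (indeed by $8$ when both are odd). Hence $I\in\Z$ already excludes the mixed-parity cases and forces $C\equiv D\pmod 2$.

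It then remains to rule out that $C$ and $D$ are both even, and for this I would invoke \eqref{eq:p}. Writing $E=\tfrac b2(C^2-D^2)+CDc$ for the quantity squared in the numerator of \eqref{eq:p}, if $C=2C'$ and $D=2D'$ then $\tfrac b2(C^2-D^2)=2b(C'^2-D'^2)$ and $CDc=4C'D'c$ are each divisible by $4$ (using $b$ even), so $E\equiv 0\pmod 4$ and $E^2\equiv 0\pmod{16}$. On the other hand $C^2+D^2+1$ is then odd, and $d$ is odd, so $d(C^2+D^2+1)$ is odd; therefore the numerator $E^2+d(C^2+D^2+1)$ of \eqref{eq:p} is odd and cannot be divisible by $16$, contradicting $p\in\Z$. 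Combining the two steps, $C\equiv D\pmod 2$ together with ``not both even'' leaves only the case that $C$ and $D$ are both odd, which is the assertion.

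The only delicate point is the $2$-adic bookkeeping in the last step: one must verify that the factor $16$ in \eqref{eq:p} is genuinely obstructed, which is precisely where the oddness of $d$ (guaranteed by $d\equiv 5\pmod 8$) and the evenness of $b$ are used, via the fact that $E$ gains a factor $4$ exactly when $C,D$ are even. I expect no real obstacle beyond carefully tracking these valuations; I would also note that, since the mixed-parity case makes the numerator of \eqref{eq:p} congruent to $3\pmod 4$, the integrality of $p$ alone in fact suffices to force both $C$ and $D$ odd, with \eqref{eq:I} giving the cleaner route to the same conclusion.
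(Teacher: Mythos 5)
Your proof is correct and follows essentially the same route as the paper: a parity analysis in which the integrality of $I$ (via $c$ odd and $b$ even) excludes the mixed-parity case and the integrality of $p$ (via $d$ odd) excludes the case $C,D$ both even, exactly as in the paper's two-case argument. Your closing observation that the mixed-parity case already makes the numerator of \eqref{eq:p} congruent to $3\pmod 4$ (using $b/2$ odd) is a correct small strengthening, but it does not change the substance of the proof.
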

\begin{proof}
First note that if both $C$ and $D$ are even integers, then by equation (\ref{eq:p}), $16p$ is odd. Hence $p$ is not an integer.

Next we suppose that one of $C$ and $D$ is odd and the other is even. Note that $c$ is an odd integer, as we assumed that $d = b^2 + c^2$ is odd, and $b$ is even. In this case $C^2c-2CDb-D^2c$ is odd, which yields that $I$ is not an integer.
\end{proof}

We next show that if $3$ divides $c$, then $p$ and $I$ cannot both be prime numbers other than 3.

\begin{lem}
If $3\mid c$, then $3\mid pI$.
\end{lem}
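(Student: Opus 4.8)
The plan is to work entirely modulo $3$ and to exploit that $d=b^2+c^2$ is square-free. The first and most important step is the observation that $3\mid c$ forces $3\nmid b$: if we also had $3\mid b$, then $9\mid b^2$ and $9\mid c^2$ would give $9\mid d$, contradicting square-freeness. Hence $b^2\equiv 1\pmod 3$ and so $d=b^2+c^2\equiv 1\pmod 3$. I also record that $b$ is even, writing $b=2\beta$ with $\beta\in\Z$, so that $C^2\tfrac b2+CDc-D^2\tfrac b2=\beta(C^2-D^2)+CDc$, and from (\ref{eq:p}) and (\ref{eq:I}) (with $B=\pm1$),
\[
16p=\bigl(\beta(C^2-D^2)+CDc\bigr)^2+(C^2+D^2+1)d,\qquad 4I=\abs{c(C^2-D^2)-2CDb}.
\]

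Since $3\nmid 16$ and $3\nmid 4$, the divisibilities $3\mid p$ and $3\mid I$ are equivalent to $3\mid 16p$ and $3\mid 4I$ respectively, so it suffices to establish one of these. I would then split into cases according to the residues of $C$ and $D$ modulo $3$, the two cases being $3\mid CD$ and $3\nmid CD$.

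In the case $3\mid C$ or $3\mid D$, I claim $3\mid I$. Reducing $4I$ modulo $3$ and using $c\equiv 0$, the quantity inside the absolute value is congruent to $-2CDb\equiv CDb\pmod 3$; since $3\nmid b$ and $3\mid CD$, this is $\equiv 0$, so $3\mid 4I$ and hence $3\mid I$. In the remaining case $3\nmid C$ and $3\nmid D$, I claim $3\mid p$. Here $C^2\equiv D^2\equiv 1\pmod 3$, so in the displayed formula for $16p$ the first summand reduces to $\bigl(\beta(1-1)+0\bigr)^2\equiv 0$ and the second to $(1+1+1)\cdot 1\equiv 0\pmod 3$; thus $3\mid 16p$ and therefore $3\mid p$. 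In either case $3\mid pI$, as required.

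The computation is elementary once the bookkeeping is in place, so I do not expect any single step to be a serious obstacle; the one genuinely substantive point — and the one I would emphasize — is the opening square-free argument that pins down $d\equiv 1\pmod 3$. It is precisely the interaction of $3\mid c$ with the square-freeness of $d=b^2+c^2$ that simultaneously rules out $3\mid b$ and produces the two cancellations $C^2+D^2+1\equiv 0$ and $\beta(C^2-D^2)\equiv 0$ that force $3\mid p$ in the second case.
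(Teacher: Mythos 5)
Your proof is correct and takes essentially the same route as the paper's: reduce $16p$ and $4I$ modulo $3$ using $c\equiv 0$ and $d\equiv 1\pmod{3}$, then split on whether $3\mid CD$ (giving $3\mid I$) or not (giving $C^2\equiv D^2\equiv 1$ and hence $3\mid p$). The only difference is cosmetic: you eliminate the subcase $3\mid b$ up front via the square-freeness of $d$, whereas the paper handles it directly by noting $3\mid p$ and $3\mid I$ there; both dispositions are valid.
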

\begin{proof}
Note that  If $3\mid b$, then $3\mid p$ and $3\mid I$. If $3\nmid b$, then $d = b^2+c^2\equiv 1\mod{3}$. Hence $p\equiv (C^2-D^2)^2+C^2+D^2+1\mod{3}$ and $I\equiv \abs{bCD}\mod{3}$. One then verifies that for all choices of congruence classes of $C,D\mod{3}$, we always have $3\mid I$ or $3\mid p$. Thus the desired assertion holds.
\end{proof}

As an immediate corollary, we have

\begin{cor}
Let $p\geq 5$ be a prime number, and $K=\sqrt{-d-b\sqrt{d}}$ satisfy conditions (i)-(iii), $d\equiv 13\mod{24}$, $3\nmid b$. Then there exists no Weil $p$-number $\pi\in K$ such that $[\OK:\Z[\pi,\bar{\pi}]]$ is a prime number larger than 3.
\end{cor}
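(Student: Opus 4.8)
The plan is to reduce the assertion to the preceding lemma, which guarantees that $3\mid pI$ as soon as $3\mid c$. Everything therefore comes down to extracting the divisibility $3\mid c$ from the two hypotheses $d\equiv 13\mod{24}$ and $3\nmid b$; once that is in hand the conclusion follows from a one-line primality argument.

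First I would unpack the congruence on $d$. Since $24 = 8\cdot 3$, the condition $d\equiv 13\mod{24}$ simultaneously records $d\equiv 5\mod 8$ (the standing hypothesis on the real subfield) and $d\equiv 1\mod 3$. I would then pass to the relation $d = b^2+c^2$, which is available here because Proposition \ref{thm:condition-on-K} normalizes $a_0 = 1$ and hence $a = d = b^2 + c^2$. Reducing modulo $3$ and using $3\nmid b$, so that $b^2\equiv 1\mod 3$, gives $c^2 \equiv d - b^2 \equiv 1 - 1 \equiv 0\mod 3$; since $3$ is prime this forces $3\mid c$.

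With $3\mid c$ secured, the preceding lemma yields $3\mid pI$. I would finish by contradiction: suppose some Weil $p$-number $\pi$ had $I = [\OK:\Z[\pi,\bar{\pi}]]$ equal to a prime exceeding $3$. For $I$ to be prime we are in the regime $B=\pm1$ fixed earlier in the section, so that the formulas \eqref{eq:p} and \eqref{eq:I} feeding the lemma are the operative ones. Because $p\geq 5$ is prime we have $3\nmid p$, so $3\mid pI$ forces $3\mid I$; but an $I$ that is prime and divisible by $3$ must equal $3$, contradicting $I>3$. Hence no such $\pi$ exists.

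The argument is short, and the only genuine step is the modular bookkeeping that converts $d\equiv 13\mod{24}$ together with $3\nmid b$ into $3\mid c$. I expect the only point requiring care — rather than a real obstacle — to be confirming that $d = b^2 + c^2$ is legitimately in force, which it is by the normalization $a_0 = 1$ adopted throughout this section.
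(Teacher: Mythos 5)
Your proof is correct and follows essentially the same route as the paper's: both extract $d\equiv 1\bmod 3$ from the hypothesis, combine it with $d=b^2+c^2$ and $3\nmid b$ to conclude $3\mid c$, invoke the preceding lemma to get $3\mid pI$, and then use $p\geq 5$ to force $3\mid I$. Your write-up is somewhat more explicit about the modular bookkeeping and the final contradiction, but the argument is the same.
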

\begin{proof}
Since $d=b^2+c^2$, and $d\equiv 1\mod{3}$, $3\nmid b$, we have $3\mid c$. Hence by the above lemma, $3\mid pI$. As $p\geq 5$ is a prime number, we have $3\mid I$ and therefore $3\mid[\OK:\Z[\pi,\bar{\pi}]]$.
\end{proof}

\begin{rem}
The only field with $\disc K_0\leq 100$ that has the property described in the above corollary is the field $K=\Q\left(\sqrt{-13-2\sqrt{13}}\right)$. The next such field is $K=\Q\left(\sqrt{-109-10\sqrt{109}}\right)$.
\end{rem}

It is convenient to write
$$\eps = \frac{-c+\sqrt{d}}{b}\quad\textrm{and}\quad \eps'= \frac{-c-\sqrt{d}}{b}.$$
Note that $\eps,\eps'\in K_0$, and $\eps\eps'=-1$. We also note that $\eps$ and $\eps'$ are $\Gal(K_0/\Q)$-conjugate. We may define a $K_0$-linear transformation from $K_0[C,D]\to K_0[U,V]$ as
\begin{eqnarray}
\label{eq:transform:1} U &=& C - \eps D \\
\label{eq:transform:2} V &=& C - \eps'D
\end{eqnarray}
Note that this linear transform induces a linear transformation from $\Z/l\Z[C,D]\to\Z/l\Z[U,V]$, where $l$ is an odd prime number, as long as $d$ is a quadratic residue modulo $l$ and $l\nmid b$. This induced transformation is invertible if and only if $l$ does not divide $d$. Based on the this transformation, we deduce the following result on the factorization of $p$ and $I$, regarded as polynomials in $K_0[U,V]$.

\begin{thm}
With $U$ and $V$ defined as above, we have the following factorizations in $K_0$.
\begin{eqnarray}
\label{eq:factorization-of-p} p &=& \frac{1}{16}\left(\frac{b}{2}U^2+\eps\sqrt{d}\right)\left(\frac{b}{2}V^2-\eps'\sqrt{d}\right), \\
\label{eq:factorization-of-I}I &=& \frac{1}{8}b\abs{\eps'(U-\eps V)(U+\eps V)}
\end{eqnarray}
\end{thm}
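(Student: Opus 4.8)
The plan is to verify the two identities (\ref{eq:factorization-of-p}) and (\ref{eq:factorization-of-I}) by direct expansion, reducing each right-hand side to a polynomial in $C,D$ and matching it against the formulas (\ref{eq:p}) and (\ref{eq:I}) that are already in hand. The only inputs needed are the elementary relations among $\eps,\eps',\sqrt{d}$, which follow at once from the definitions together with $d=b^2+c^2$: namely $\eps+\eps'=-2c/b$, $\eps-\eps'=2\sqrt{d}/b$, and $\eps\eps'=(c^2-d)/b^2=-b^2/b^2=-1$. Using the transformation (\ref{eq:transform:1})--(\ref{eq:transform:2}), I would first record two auxiliary computations: the product $UV=C^2+\tfrac{2c}{b}CD-D^2$, so that $\tfrac{b}{2}UV=\tfrac{b}{2}C^2+cCD-\tfrac{b}{2}D^2$; and the combination $\eps V^2-\eps'U^2=(\eps-\eps')(C^2-\eps\eps'D^2)=\tfrac{2\sqrt{d}}{b}(C^2+D^2)$, where $\eps\eps'=-1$ is used to collapse the $D^2$ coefficient.

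For (\ref{eq:factorization-of-p}) I would expand the right-hand product as
\[
\left(\tfrac{b}{2}U^2+\eps\sqrt{d}\right)\left(\tfrac{b}{2}V^2-\eps'\sqrt{d}\right)=\tfrac{b^2}{4}(UV)^2+\tfrac{b\sqrt{d}}{2}\left(\eps V^2-\eps'U^2\right)-\eps\eps'd.
\]
Substituting the auxiliary expressions, the middle term becomes $\tfrac{b\sqrt{d}}{2}\cdot\tfrac{2\sqrt{d}}{b}(C^2+D^2)=d(C^2+D^2)$, the last term is $+d$ (since $\eps\eps'=-1$), and $\tfrac{b^2}{4}(UV)^2=\left(\tfrac{b}{2}C^2+cCD-\tfrac{b}{2}D^2\right)^2$. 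Multiplying the sum by $\tfrac{1}{16}$ then reproduces (\ref{eq:p}) verbatim. The point worth flagging is that although the three individual summands involve $\sqrt{d}$ and coefficients $\eps,\eps'\in K_0$, they recombine into a rational expression, as they must since $p\in\Q$; tracking the cancellation of the $\sqrt{d}$ terms is the only place care is required.

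For (\ref{eq:factorization-of-I}) I would write $(U-\eps V)(U+\eps V)=U^2-\eps^2V^2$ and expand in $C,D$; using $\eps\eps'=-1$ one obtains $U^2-\eps^2V^2=(1-\eps^2)(C^2-D^2)-4\eps CD$. Multiplying by $\eps'$ and simplifying via $\eps'(1-\eps^2)=\eps'+\eps=-2c/b$ and $-4\eps\eps'=4$ yields $\eps'(U^2-\eps^2V^2)=-\tfrac{2c}{b}(C^2-D^2)+4CD$. Hence $\tfrac{b}{8}\,\eps'(U-\eps V)(U+\eps V)=-\tfrac14\left(cC^2-2bCD-cD^2\right)$, and taking absolute values recovers precisely $I$ as in (\ref{eq:I}).

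I do not expect a genuine obstacle: the statement is a polynomial identity over $K_0$ and the proof is bookkeeping. The only subtleties are sign and absolute-value tracking in the $I$-computation (the product $\eps'(U-\eps V)(U+\eps V)$ emerges as the negative of $cC^2-2bCD-cD^2$, which is harmless under $\abs{\cdot}$) and, as noted above, confirming that every $\sqrt{d}$ cancels in the $p$-computation so that the factorization, though carried out in $K_0$, evaluates to the rational number $p$.
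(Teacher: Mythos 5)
Your verification is correct and is exactly the computation the paper has in mind: the paper's proof consists only of the sentence ``Both identities can be easily verified,'' and your expansion via $\eps+\eps'=-2c/b$, $\eps-\eps'=2\sqrt{d}/b$, $\eps\eps'=-1$ supplies the missing details and reproduces (\ref{eq:p}) and (\ref{eq:I}) exactly (the sign flip absorbed by the absolute value in the $I$-identity is handled correctly). No gap; this is the same approach, just written out.
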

\begin{proof}
Both identities can be easily verified.
\end{proof}

It is convenient to define the following subsets of odd prime numbers.
\begin{itemize}
\item[(1)] Let $\mathcal P_C$ denote the set of odd prime numbers that split completely in $K/\Q$.
\item[(2)] Let $\mathcal P_S$ denote the set of odd prime numbers that split in $K_0/\Q$, but do not belong to $\mathcal P_C$.
\item[(3)] Let $\mathcal P_I$ denote the set of odd prime numbers that are inert in $K/\Q$.
\item[(4)] Let $\mathcal R$ denote the set of odd primes that ramify in $K/\Q$.
\end{itemize}
Note that by assumptions (i) and (ii), $\mathcal R$ consists of the prime divisors of $d$. We have the following lemma on the prime numbers that divide $p$.

\begin{lem}\label{lem:divisor-of-p}
If $l\mid p$, then $l\in\mathcal P_C\cup\mathcal R$.
\end{lem}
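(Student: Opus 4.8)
The plan is to prove the contrapositive by ruling out each of the remaining Chebotarev classes. Every odd prime $l$ lies in exactly one of $\mathcal P_C$, $\mathcal P_S$, $\mathcal P_I$, or $\mathcal R$, so to show $l \mid p \implies l \in \mathcal P_C \cup \mathcal R$ it suffices to show that no prime in $\mathcal P_S$ and no prime in $\mathcal P_I$ can divide $p$. The natural vehicle is the norm relation $\pi\bar\pi = p$ together with the factorization of $p$ in $K_0$ supplied by the theorem just proved. Indeed, taking norms down to $\Q$ of equation (\ref{eq:factorization-of-p}) exhibits $p$ (up to the normalizing constant) as a norm from $K_0$, and more refined information comes from reading off how $l$ decomposes relative to the two quadratic steps $\Q \subset K_0 \subset K$.

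First I would treat the inert case. If $l \in \mathcal P_I$, then $l$ is inert in $K/\Q$, so the local ring $\OK/l\OK$ is a field (the residue extension has degree $4$); in particular $\pi \bmod l$ and $\bar\pi \bmod l$ are elements of a field, and $l \mid \pi\bar\pi = p$ would force $\pi \equiv 0$ or $\bar\pi \equiv 0 \pmod{l\OK}$. But $\pi$ is a Weil $p$-number with $\Norm_{K/\Q}(\pi) = p^2$, and if $l \mid \pi$ in $\OK$ then $l^4 \mid \Norm_{K/\Q}(\pi) = p^2$, which is impossible since $p$ is prime and $l \neq p$ (as $l \notin \mathcal R$ and $p$ ramifies only at divisors of $d$). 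Hence $l \nmid p$, so no inert prime divides $p$.

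Next I would treat the split-in-$K_0$-but-not-in-$K$ case, which I expect to be the main obstacle. For $l \in \mathcal P_S$ the prime $l$ splits in $K_0$ as $l\OK_{K_0} = \mathfrak l_1 \mathfrak l_2$, but each $\mathfrak l_i$ stays inert in $K/K_0$. The key is to use the factorization (\ref{eq:factorization-of-p}): since $d$ is a square mod $l$ (as $l$ splits in $K_0 = \Q(\sqrt d)$), the linear change of variables to $U,V$ is defined mod $l$, and one can reduce the two factors $\frac{b}{2}U^2 + \eps\sqrt d$ and $\frac{b}{2}V^2 - \eps'\sqrt d$ modulo each prime $\mathfrak l_i$. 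The claim to establish is that $l \mid p$ would force one of these factors to vanish modulo $\mathfrak l_i$, which is precisely the condition for the relevant prime of $K_0$ to split further in $K/K_0$ — contradicting $l \in \mathcal P_S$. Concretely, I would show that $\mathfrak l_i \mid \bigl(\tfrac{b}{2}U^2 + \eps\sqrt d\bigr)$ in $\OK_{K_0}$ says that $-\eps\sqrt d / \tfrac b2$ is a square in the residue field $\OK_{K_0}/\mathfrak l_i = \F_l$, i.e. that $-a - b\sqrt d$ (whose square root generates $K$ over $K_0$) becomes a square mod $\mathfrak l_i$, which is exactly the splitting criterion for $\mathfrak l_i$ in $K/K_0$.

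The delicate points to watch are: keeping track of the constant $\tfrac1{16}$ and the factor $b$ so that divisibility is read off correctly (here conditions (i)–(ii) guarantee $l \nmid d$ and hence $l \nmid b$ for $l \notin \mathcal R$, so these factors are units mod $l$); checking that $\eps,\eps'$ and $\sqrt d$ genuinely reduce to well-defined elements of the residue field, which holds because $l$ splits in $K_0$; and ensuring the argument handles both residue primes $\mathfrak l_1,\mathfrak l_2$ symmetrically. Once the split case is settled, combining it with the inert case completes the proof, since the only remaining possibilities for $l \mid p$ are $l \in \mathcal P_C$ (split completely) or $l \in \mathcal R$ (ramified), which is the assertion.
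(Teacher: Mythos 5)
Your overall strategy (rule out $\mathcal P_I$ and $\mathcal P_S$ one class at a time) is the same as the paper's, but your execution diverges, and in the inert case it has a real gap. Your argument there is: $l\OK$ prime divides $\pi\bar\pi$, hence divides $\pi$ (say), hence $l^4\mid \Norm_{K/\Q}(\pi)=p^2$, contradiction ``since $p$ is prime.'' But this lemma is invoked inside the computation of $\Prob(l\nmid p\textrm{ and }l\nmid I)$, where $p$ denotes the value $\pi\bar\pi$ of the quartic form at \emph{random} odd $(C,D)$ --- it is precisely not assumed prime there (primality of $p$ is the event whose probability is being estimated). For composite $p$ your norm computation only yields $l^2\mid p$, which is no contradiction. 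The paper instead uses that $l\OK$, being stable under all of $\Gal(K/\Q)$, must divide every conjugate of $\pi$ and hence divides $\pi-\pi^{\sigma}+\bar\pi-\bar\pi^{\sigma}=B\sqrt{d}=\pm\sqrt{d}$, forcing $l\mid d$ and contradicting $l\notin\mathcal R$; that argument needs no primality of $p$. (Your parenthetical ``$l\neq p$ as $p$ ramifies only at divisors of $d$'' is also confused --- if $p$ were prime, $l\mid p$ would force $l=p$ --- though it happens to be harmless since $l^4\mid p^2$ already fails for $l=p$ prime.)

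Your treatment of $\mathcal P_S$ is a genuinely different and workable route: instead of the paper's ideal-theoretic argument (each $\mathfrak l_i$ is inert in $K/K_0$ and fixed by complex conjugation, so $\mathfrak l_i\OK$ divides all four conjugates of $\pi$ and again divides $\sqrt{d}$), you reduce the factorization (\ref{eq:factorization-of-p}) modulo a prime of $K_0$ above $l$ and observe that the vanishing of either factor makes $-2\eps\sqrt{d}/b$ or $2\eps'\sqrt{d}/b$ a square mod $l$, which by Lemma \ref{lem:residue} puts $l$ in $\mathcal P_C$. That is a clean alternative (and arguably more in the spirit of the section), but it carries one unjustified step: you assert that $l\nmid d$ implies $l\nmid b$, which is false --- in the paper's own example $K=\Q(\sqrt{-37-6\sqrt{37}})$ the prime $l=3$ divides $b=6$ but not $d=37$. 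For $l\mid b$ the quantities $\eps,\eps'$ are not defined mod $l$ and the factorization degenerates; that subcase must be handled separately, e.g.\ by noting $16p\equiv c^2(C^2+1)(D^2+1)\pmod{l}$, so $l\mid p$ forces $-1$ to be a square mod $l$, which for $l\mid b$ is exactly the condition $l\in\mathcal P_C$. With the inert case repaired as above and this subcase added, your proof would be complete.
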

\begin{proof}
Suppose that $l$ divides $p$. We shall prove by contradiction that either $l$ ramifies in $K/\Q$ or $l$ completely splits in $K/\Q$. If $l$ is inert, then $\mathfrak L=l\OK$ is a prime ideal in $K$. From $\mathfrak L\mid p = \pi\bar{\pi}$ we deduce that either $\mathfrak L\mid \pi$ or $\mathfrak L\mid \bar{\pi}$. Without loss of generality we may assume that $L\mid \pi$. Since $\mathfrak L$ is inert, we have $\mathfrak L^{\sigma}=\mathfrak L$ for all $\sigma\in\Gal(K/\Q)$. Hence $\mathfrak L$ divides all the conjugates of $\pi$, which shows that $\mathfrak L\mid \pi-\pi^{\sigma}+\bar{\pi} - \bar{\pi}^{\sigma}=\sqrt{d}$. This yields that $l$ ramifies in $K_0/\Q$, a contradiction.

Next we assume that $l$ splits in $K_0/\Q$ as $l\mathcal O_{K_0}=\mathfrak l_1\mathfrak l_2$, but $\mathfrak l_i$ remains prime in the extension $K/K_0$, for $i=1,2$. Write $\mathfrak L_i = \mathfrak l_i\OK$ for $i=1,2$, which are both prime ideals in $K$. Note that $\mathfrak L_1^{\sigma} = \mathfrak L_2$ and $\mathfrak L_2^{\sigma}=\mathfrak L_1$, where $\sigma$ is a generator of $\Gal(K/\Q)$. From $l\mid p$ we deduce that $\pi\bar{\pi}\in \mathfrak L_1\mathfrak L_2=\mathfrak L_1\cap\mathfrak L_2$. Hence $\pi\bar{\pi}\in\mathfrak L_i$ for both $i=1,2$. Hence for each $i=1,2$, $\mathfrak L_i$ divides one of $\pi$ or $\bar{\pi}$. However, as $\mathfrak L_i$ is invariant under complex conjugation, $\mathfrak L_i$ divides both $\pi$ and $\bar{\pi}$ for both $i=1,2$. Therefore both $\mathfrak L_1$ and $\mathfrak L_2$ divide all four conjugates of $\pi$, and thus so does $\mathfrak L=\mathfrak L_1\cap \mathfrak L_2$. Hence $\mathfrak L\mid\sqrt{d}$, which yields a contradiction.
\end{proof}

\

The next proposition gives the probability that a random prime number $l$ divides $I$. We first prove the following lemma.

\begin{lem}
If $l$ is an odd prime number such that $l\mid bc$ and $l\nmid d$, then $l$ splits in $K_0/\Q$.
\end{lem}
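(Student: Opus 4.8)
The plan is to reduce the claim to a single congruence condition via the standard splitting criterion for a quadratic field. Since $K_0=\Q(\sqrt d)$ with $d\equiv 5\pmod 8$ square-free, we have $d\equiv 1\pmod 4$ and hence $\disc(K_0)=d$; so an odd prime $l$ is unramified in $K_0/\Q$ precisely when $l\nmid d$, and an unramified odd $l$ splits in $K_0/\Q$ if and only if $d$ is a nonzero quadratic residue modulo $l$, i.e. $\left(\frac{d}{l}\right)=1$. Thus the whole statement amounts to showing that the hypotheses $l\mid bc$ and $l\nmid d$ force $d$ to be a nonzero square modulo $l$.

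The key input is the relation $d=b^2+c^2$, which holds under the running assumption $a_0=1$ of Proposition~\ref{thm:condition-on-K} (so $a=d$, and the cyclicity relation $a^2=(b^2+c^2)d$ collapses to $d=b^2+c^2$). With this identity in hand, I would split into the two cases permitted by $l\mid bc$. If $l\mid b$, then reducing modulo $l$ gives $d\equiv c^2\pmod l$; if instead $l\mid c$, then $d\equiv b^2\pmod l$. In either case $d$ is congruent to a perfect square mod $l$.

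It then remains only to check that this square is nonzero mod $l$, so that it is a genuine quadratic residue rather than $0$. This is exactly where the hypothesis $l\nmid d$ enters: if we had $l\mid b$ and $l\mid c$ simultaneously, then $l$ would divide $d=b^2+c^2$, contradicting $l\nmid d$; hence $l\nmid c$ in the first case (and symmetrically $l\nmid b$ in the second). Therefore $c^2$ (resp. $b^2$) is a nonzero square modulo $l$, so $\left(\frac{d}{l}\right)=1$ and $l$ splits in $K_0/\Q$.

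There is no serious obstacle here: the lemma is essentially a one-line consequence of $d=b^2+c^2$ together with the splitting dictionary for quadratic fields. The only point demanding a little care is the bookkeeping — ensuring $l$ is unramified (handled by $l\nmid d=\disc K_0$) and that the relevant square is nonzero (again handled by $l\nmid d$), so that ``congruent to a square'' upgrades to ``is a quadratic residue, hence splits.''
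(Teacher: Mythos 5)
Your proof is correct and follows essentially the same route as the paper's: both use $d=b^2+c^2$ to conclude $d\equiv c^2$ or $d\equiv b^2\pmod l$ and then invoke the splitting criterion for $\Q(\sqrt d)$ (the paper phrases this via Kummer's theorem applied to the minimal polynomial of $\frac{1+\sqrt d}{2}$, you via $\left(\frac{d}{l}\right)=1$). The only difference is that you explicitly verify the square is nonzero modulo $l$ using $l\nmid d$, a point the paper leaves implicit.
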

\begin{proof}
First note that $d=b^2+c^2$. If $l\mid b$, then $d\equiv c^2\mod{l}$. If $l\mid c$ then $d\equiv b^2\mod{l}$. Hence in both cases, $d$ is a quadratic residue modulo $l$. Thus the minimal polynomial of $\frac{1+\sqrt{d}}{2}$ factors in $\Z/l\Z$, and $l$ splits in $K_0/\Q$ by Kummer's theorem.
\end{proof}

\

\begin{prop}\label{thm:2:gen}
Let $C$ and $D$ be random odd integers. If $l$ is an odd prime number, then
\begin{equation}
\Prob(l\nmid I) = \left\{
                    \begin{array}{cl}
                      (1-1/l)^2, & \hbox{if $l$ splits in $K_0/\Q$;} \\
                      (1-1/l^2), & \hbox{if $l$ is inert in $K_0/\Q$;} \\
                      (1-1/l), & \hbox{if $l\mid d$.}
                    \end{array}
                  \right.
\end{equation}
\end{prop}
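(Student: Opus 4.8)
The plan is to convert the stated probability into a point count for a binary quadratic form over $\F_l$, and then read off the three cases from the factorization type of that form, which is governed by the class of $d$ modulo $l$.

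First I would reduce the divisibility condition to a congruence. After the normalization $B=\pm 1$ made just before the lemma, (\ref{eq:I}) reads $I=\frac14\abs{Q(C,D)}$ with
\begin{equation*}
Q(C,D)=cC^2-2bCD-cD^2 .
\end{equation*}
Since $l$ is odd it is coprime to $4$, so $l\nmid I$ if and only if $Q(C,D)\not\equiv 0\pmod l$, a condition depending only on $C,D$ modulo $l$. As $C$ and $D$ range over odd integers and $l$ is odd, the reductions $C\bmod l$ and $D\bmod l$ are independent and uniform on $\F_l$ (writing $C=2k+1$, the map $k\mapsto 2k+1$ permutes $\F_l$). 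Hence
\begin{equation*}
\Prob(l\nmid I)=\frac{1}{l^2}\,\#\set{(C,D)\in\F_l^2 : Q(C,D)\neq 0}=1-\frac{N_0}{l^2},
\end{equation*}
where $N_0$ is the number of zeros of $Q$ in $\F_l^2$, so everything reduces to computing $N_0$.

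Next I would classify $Q$ by its discriminant, which is $(-2b)^2-4(c)(-c)=4(b^2+c^2)=4d$; since $4$ is a square modulo the odd prime $l$, the factorization type is determined by $d\bmod l$. Invoking the standard trichotomy for a conic over $\F_l$: if $d$ is a nonzero square then the zero set of $Q$ consists of two distinct lines through the origin, so $N_0=2l-1$; if $d$ is a nonsquare then $Q$ is anisotropic and $N_0=1$; and if $l\mid d$ then $Q$ is degenerate with $N_0=l$, provided its leading coefficient is a unit. Substituting into $1-N_0/l^2$ produces $(1-1/l)^2$, $1-1/l^2$, and $1-1/l$ respectively. Since $d\equiv 5\bmod 8$ is square-free we have $\disc K_0=d$, so for odd $l$ the three alternatives ($d$ a nonzero square, $d$ a nonsquare, $l\mid d$) are precisely the cases that $l$ splits, is inert, and ramifies in $K_0/\Q$, matching the statement.

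The step requiring the most care — and the main obstacle — is the ramified case $l\mid d$, where I must rule out the degenerate possibility that the leading coefficient $c$ also vanishes modulo $l$. If $l\mid c$ and $l\mid d$, then $l\mid d-c^2=b^2$, so $l\mid b$, whence $l^2\mid b^2+c^2=d$, contradicting the square-freeness of $d$; therefore $l\nmid c$. Consequently the reduction of $Q$ equals $c^{-1}(cC-bD)^2$ up to the unit $c$, its zero locus is the single line $cC\equiv bD$, and $N_0=l$ exactly. As a consistency check, the factorization (\ref{eq:factorization-of-I}) exhibits the two linear factors precisely when $\eps,\eps'\in\F_l$, i.e.\ when $d$ is a square mod $l$, which is the split case. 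The remaining work is the routine conic count recorded above.
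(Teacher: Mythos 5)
Your proof is correct and follows essentially the same route as the paper's: both reduce $\Prob(l\nmid I)$ to counting zeros of the binary quadratic form $cC^2-2bCD-cD^2$ over $\F_l$, with the trichotomy governed by the quadratic character of $d$ modulo $l$; the paper just carries this out by explicit factorization (the $U,V$ change of variables in the split case, completing the square in the inert and ramified cases) rather than by invoking the discriminant classification of binary forms. Your write-up is in fact slightly tidier in two spots: it avoids the paper's separate $l\mid b$ subcase when $l$ splits, and it explicitly verifies $l\nmid c$ when $l\mid d$, a point the paper's division by $c$ silently assumes.
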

\begin{proof}
First, if $l$ splits in $K_0/\Q$, then $\sqrt{d}\in\Z/l\Z$. First, if we assume that $l\nmid b$, we then have the factorization
\begin{equation*}
I \equiv \frac{1}{8}b\abs{\eps'(U-\eps V)(U+\eps V)} \mod{l}.
\end{equation*}
Thus $l\mid I$ if and only if $l\mid U+\eps V$ or $l\mid U-\eps V$. Since $C\mod{l}$ and $D\mod{l}$ are randomly chosen in $\Z/l\Z$, as $l\neq d$, the two factors $U+\eps V$ and $U-\eps V$ also range randomly in $\Z/l\Z$. Hence the probability that $l$ divides each of the factors is $1/l$. Moreover, since $C\mod{l}$ and $D\mod{l}$ are independent random variables, the probability that $l$ divides both factors is $1/l^2$. If $l\mid b$, then $I\equiv \frac{1}{4}\abs{cC^2-cD^2}\equiv\frac{1}{4}\abs{c(C+D)(C-D)}\mod{l}$. Thus $l$ divides $I$ if and only if $l$ divides $C+D$ or $C-D$. Therefore in all the three cases we have $\Prob(l\nmid I)=1-2/l+1/l^2 = (1-1/l)^2$.

Next we consider the case when $l$ is inert in $K_0/\Q$. Note that in this case $d$ is a non-residue modulo $l$. We may write $I = \frac{1}{4}\abs{(C-\frac{b}{c}D)^2-\frac{a_0^2d}{c^2}D^2}$. Thus $I\equiv 0\mod{l}$ if and only if $(C-\frac{b}{c}D)^2-\frac{a_0^2d}{c^2}D^2\equiv 0\mod{l}$, which is equivalent to $C\equiv D\equiv 0\mod{l}$. This shows that $l$ divides $I$ if and only if $l$ divides both $C$ and $D$, which occurs with probability $1/l^2$. Therefore $\Prob(l\nmid I) = 1-1/l^2$.

If $l\mid d$, then $I\equiv \frac{1}{4}(C-\frac{b}{c}D)^2\mod{l}$. Hence $d\mid I$ if and only if $l\mid C-\frac{b}{c}D$. Since $C-\frac{b}{c}D$ is a linear combination of $C$ and $D$, $l\mid C-\frac{b}{c}D$ with probability $1/d$. Therefore $\Prob(l\nmid I) = 1-1/l$.
\end{proof}

We need the following lemma to prove our main result on the probability that $l$ does not divide either $p$ or $I$.

\begin{lem}\label{lem:residue}
Suppose $l$ is an odd prime number and $l\nmid b$. Then the following are equivalent.
\begin{enumerate}
\item[(i)] $l\in\mathcal P_C$;
\item[(ii)] $-\frac{2\eps\sqrt{d}}{b}$ is a square modulo $l$;
\item[(iii)] $\frac{2\eps'\sqrt{d}}{b}$ is a square modulo $l$.
\end{enumerate}
\end{lem}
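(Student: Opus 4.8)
The plan is to reduce the complete-splitting condition (i) to a single quadratic-residue condition in $\F_l$, and then to connect that condition to (ii) and (iii) through two algebraic identities in $K_0^{\times}$.

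First I would translate (i). Writing $K = K_0(\sqrt{\eta})$ with $\eta = -a - b\sqrt{d} = -d - b\sqrt{d}$ (recall $a = d$ under the standing hypotheses), an odd prime $l \nmid \disc(K)$ splits completely in $K$ exactly when it splits in $K_0$ -- equivalently $d$ is a square mod $l$, so that $\sqrt{d}$ reduces to an element of $\F_l$ -- and, in addition, the primes $\mathfrak{l}$ of $K_0$ above $l$ split in $K/K_0$, i.e.\ $\eta$ is a square modulo $\mathfrak{l}$. Here I would use that $K/\Q$ is cyclic: the two primes of $K_0$ above a split $l$ are interchanged by $\sigma$ and hence have identical behaviour in $K/K_0$, so it suffices to test the square condition at one of them. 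Note that (ii) and (iii) already presuppose $\sqrt{d}\in\F_l$ for the displayed elements to lie in $\F_l$, so the requirement that $l$ split in $K_0$ is built into all three statements.

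For (i)$\Leftrightarrow$(ii) the key is the square-class identity in $K_0^{\times}$
$$-\frac{2\eps\sqrt{d}}{b} = \left(\frac{(b+c)-\sqrt{d}}{bc}\right)^{2}\bigl(-d - b\sqrt{d}\bigr),$$
which I would verify directly from $\eps = \frac{-c+\sqrt{d}}{b}$ and $d = b^2+c^2$ (the only genuinely computational step, and routine). Since $-\frac{2\eps\sqrt{d}}{b}$ and $\eta$ thus differ by a square of $K_0^{\times}$, they have the same quadratic character modulo every prime $\mathfrak{l}$ with $l\nmid bc$; combined with the reduction of the previous paragraph this yields (i)$\Leftrightarrow$(ii). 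For (ii)$\Leftrightarrow$(iii) I would instead compute the product
$$\left(-\frac{2\eps\sqrt{d}}{b}\right)\left(\frac{2\eps'\sqrt{d}}{b}\right) = -\frac{4\eps\eps' d}{b^2} = \frac{4d}{b^2},$$
using $\eps\eps' = -1$. Since $l$ splits in $K_0$, $d$ is a square mod $l$, so this product is a square; hence the two elements share the same quadratic character and (ii)$\Leftrightarrow$(iii).

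I expect the main obstacle to be the reduction in the first step rather than the identities: one must argue carefully that ``splits completely in $K$'' collapses to a single $\F_l$-square condition at one prime of $K_0$, which rests on the cyclicity of $K/\Q$. The same cyclic symmetry also resolves the degenerate case $l\mid c$, where the naive reduction of $\eta$ vanishes at one of the two conjugate primes and the square-class identity becomes unusable: there one cannot test the condition by direct substitution, but the equal splitting behaviour of the two primes still forces (i), (ii) and (iii) to coincide.
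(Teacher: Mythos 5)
Your argument is correct in substance, and your square-class identity does check out: expanding $\bigl((b+c)-\sqrt{d}\bigr)^2(-d-b\sqrt{d})$ and using $d-b^2=c^2$ gives $c^2(-2d+2c\sqrt{d})$, which is $b^2c^2\cdot\bigl(-\tfrac{2\eps\sqrt d}{b}\bigr)$. But your route differs from the paper's in both halves. For the splitting criterion you work through the tower $K/K_0/\Q$ ($l$ splits in $K_0$, then $\eta=-d-b\sqrt d$ is a square modulo a prime $\mathfrak l$ of $K_0$, with cyclicity used to reduce to one prime), whereas the paper applies Kummer's theorem directly to the degree-four generator: $l\in\mathcal P_C$ iff $\sqrt{-d\pm b\sqrt d}\in\F_l$. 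For the identity, the paper does not compare $-\tfrac{2\eps\sqrt d}{b}$ with $\eta$ at all; it uses the sum-of-radicals relation $\sqrt{-d-b\sqrt{d}}+\sqrt{-d+b\sqrt{d}}=\sqrt{-2d+2c\sqrt{d}}$ together with the observation that $-\tfrac{2\eps\sqrt d}{b}=\tfrac{-2d+2c\sqrt d}{b^2}$ differs from $-2d+2c\sqrt d$ by the square $b^2$, and recovers $\sqrt{-d-b\sqrt d}$ as $\tfrac12\bigl(\sqrt{-2d+2c\sqrt d}-\sqrt{-2d-2c\sqrt d}\bigr)$ for the converse. Your treatment of (ii)$\Leftrightarrow$(iii) via $\eps\eps'=-1$ is exactly the paper's.

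The one place where the paper's choice of identity buys something concrete is the case $l\mid c$, which your hypotheses do not exclude. Your square-class factor has $bc$ in the denominator and degenerates there, and your proposed repair (``the equal splitting behaviour of the two primes still forces the equivalence'') is an assertion rather than an argument; as written it leaves a gap. It is fixable by direct computation: when $l\mid c$ one has $d\equiv b^2$, the quantity in (ii) reduces to $-2$, and $\eta\equiv-2b^2$ at the prime of $K_0$ where $\sqrt d\equiv b$, so the equivalence survives --- but this needs to be said. The paper's identities involve no division by $c$, so its proof is uniform and needs no case split. You should either add the $l\mid c$ computation explicitly or switch to an identity without $c$ in the denominator.
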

\begin{proof}
The equivalence $(ii)\Leftrightarrow(iii)$ is clear, as $\eps\cdot(-\eps')=-1$. It then suffices to show that $(i)\Leftrightarrow(ii)$.

$(i)\Rightarrow(ii)$. Suppose $l\in\mathcal P_C$, then Kummer's theorem yields that $\sqrt{-d\pm b\sqrt{d}}\in\Z/l\Z$. As $\sqrt{-d-b\sqrt{d}}+\sqrt{-d+b\sqrt{d}}=\sqrt{-2d+2c\sqrt{d}}$, we have $-2d+2c\sqrt{d}$ is a square in $\Z/l\Z$. Hence $-\frac{2\eps\sqrt{d}}{b} = -\frac{\sqrt{-2d+2c\sqrt{d}}}{b^2}$ is also a square in $\Z/l\Z$.

$(ii)\Rightarrow(i)$. Suppose $-\frac{2\eps\sqrt{d}}{b}$ is a square in $\Z/l\Z$, then $$\sqrt{-d-b\sqrt{d}}=\frac{1}{2}\left(\sqrt{-2d+2c\sqrt{d}}-\sqrt{-2d-2c\sqrt{d}}\right)\in\Z/l\Z.$$ By Kummer's theorem, $l$ splits completely in $K/\Q$.
\end{proof}

\begin{prop}
Suppose $l$ is a prime number and $l\nmid b$. Then the probability of $l$ not dividing either $p$ or $I$ is
\begin{equation}
\Prob(l\nmid p\textrm{ and }l\nmid I) = \left\{
                                          \begin{array}{cl}
                                            (1-3/l)^2, & \hbox{if $l\in\mathcal P_C$;} \\
                                            (1-1/l)^2, & \hbox{if $l\in\mathcal P_S$;} \\
                                            (1-1/l^2), & \hbox{if $l\in\mathcal P_I$;} \\
                                            1-1/l, & \hbox{if $l\mid d$;} \\
                                            1, & \hbox{if $l=2$.}
                                          \end{array}
                                        \right.
\end{equation}
\end{prop}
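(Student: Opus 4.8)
The plan is to split into the five listed cases according to the behaviour of $l$ in $K$ and $K_0$, and in each case reduce to a counting problem for the reductions $C,D\pmod{l}$. This reduction is legitimate because $C,D$ are random odd integers and $l$ is odd, so odd integers equidistribute modulo $l$; consequently $C,D\pmod l$ are independent uniform, and (when $l\nmid d$, so the transformation is invertible modulo $l$) so are $U,V\pmod l$. The case $l=2$ is degenerate and I would handle it by a direct parity argument. The two cases $l\in\mathcal P_S$ and $l\in\mathcal P_I$ are then almost free: by Lemma \ref{lem:divisor-of-p}, $l\mid p$ forces $l\in\mathcal P_C\cup\mathcal R$, and since $\mathcal R$ consists of the divisors of $d$ (which ramify in $K_0$) while $\mathcal P_S,\mathcal P_I$ consist of primes unramified in $K_0$ and not in $\mathcal P_C$, neither meets $\mathcal P_C\cup\mathcal R$. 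Hence $l\nmid p$ holds identically, the event collapses to $\{l\nmid I\}$, and Proposition \ref{thm:2:gen} returns $(1-1/l)^2$ in the split case $\mathcal P_S$ and $1-1/l^2$ in the inert case $\mathcal P_I$ (noting that inertia in $K$ forces inertia in $K_0$).

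The substantive case is $l\in\mathcal P_C$, where $\sqrt d,\eps,\eps'\in\Z/l\Z$ and $U,V\pmod l$ are independent uniform. Using the factorizations (\ref{eq:factorization-of-p}) and (\ref{eq:factorization-of-I}), I would argue modulo $l$ that $l\mid p$ iff $U^2\equiv s_1$ or $V^2\equiv s_2$, where $s_1=-2\eps\sqrt d/b$ and $s_2=2\eps'\sqrt d/b$, while $l\mid I$ iff $U\equiv\eps V$ or $U\equiv-\eps V$. By Lemma \ref{lem:residue} both $s_1,s_2$ are nonzero squares, so each $p$-condition excludes exactly two values of the relevant coordinate; the identity $\eps\eps'=-1$ gives $s_1=\eps^2 s_2$. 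Choosing $w$ with $w^2=s_2$ and normalising $\tilde U=U/(\eps w)$, $\tilde V=V/w$ turns the bad set into exactly $\{\tilde U=\pm1\}\cup\{\tilde V=\pm1\}\cup\{\tilde U=\pm\tilde V\}$, a union of six lines in $(\Z/l\Z)^2$. I would then count the complement by slicing over $\tilde V$: for $\tilde V\neq 0,\pm1$ the forbidden $\tilde U$-values $\{1,-1,\tilde V,-\tilde V\}$ are four distinct points, while for $\tilde V=0$ they degenerate to three, giving $(l-3)(l-4)+(l-3)=(l-3)^2$ good points and hence probability $(1-3/l)^2$.

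For $l\mid d$ I would reduce both divisibility conditions to a single linear congruence. Setting $\sqrt d\equiv0$ in (\ref{eq:p}) collapses $16p$ to $\bigl(\tfrac{b}{2}C^2+cCD-\tfrac{b}{2}D^2\bigr)^2\pmod l$, and since $c^2\equiv-b^2$ this quadratic equals $b\bigl(C+\tfrac{c}{b}D\bigr)^2$; likewise (\ref{eq:I}) gives $cC^2-2bCD-cD^2\equiv c\bigl(C-\tfrac{b}{c}D\bigr)^2$. The identity $\tfrac{c}{b}+\tfrac{b}{c}=d/(bc)\equiv0\pmod l$ shows these two linear forms coincide, so $l\mid p\Leftrightarrow l\mid I\Leftrightarrow l\mid\bigl(C-\tfrac{b}{c}D\bigr)$, whence $\Prob(l\nmid p\textrm{ and }l\nmid I)=\Prob\bigl(l\nmid(C-\tfrac{b}{c}D)\bigr)=1-1/l$. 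Finally, for $l=2$ one has $2\nmid p$ because $p$ is an odd prime, and a parity computation using $b\equiv2\pmod4$ together with $c,C,D$ odd (which forces $cC^2-2bCD-cD^2\equiv4\pmod8$) shows $I$ is odd; thus the probability is $1$.

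The one step needing genuine care is the $\mathcal P_C$ count, where the loci $\{l\mid p\}$ and $\{l\mid I\}$ are \emph{not} independent: the relation $s_1=\eps^2 s_2$ ties the roots of the two $p$-factors to the $I$-lines $U=\pm\eps V$. A naive inclusion–exclusion over the six conditions mishandles the coincidences at $(\pm1,\pm1)$ and at the origin and would yield the wrong count $(l-2)(l-4)$; it is precisely the normalisation and the slice over $\tilde V$, which absorbs the collision at $\tilde V=0$, that produces the correct $(l-3)^2$ and hence the exponent in $(1-3/l)^2$. Verifying the square conditions of Lemma \ref{lem:residue} and the identity $s_1=\eps^2 s_2$ is therefore the crux of the argument.
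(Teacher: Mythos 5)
Your proposal is correct and follows essentially the same route as the paper: the same case split by splitting type, the same appeal to Lemma \ref{lem:divisor-of-p} to dispose of $\mathcal P_S$ and $\mathcal P_I$ via Proposition \ref{thm:2:gen}, the same six-line configuration in $\A^2(\Z/l\Z)$ for $l\in\mathcal P_C$ (your identity $s_1=\eps^2 s_2$ is exactly the paper's observation that the corner intersections lie on the zero locus of $I$, and your slice-over-$\tilde V$ count is just a reorganization of the paper's intersection-multiplicity count of $6l-9$), and the same collapse of $p$ and $I$ to a common linear form when $l\mid d$. The only differences are cosmetic (your $l=2$ parity argument and the $l\mid d$ computation are spelled out in more detail than the paper's, modulo a harmless factor of $2$ in writing $\tfrac{b}{2}(C+\tfrac{c}{b}D)^2$ as $b(C+\tfrac{c}{b}D)^2$).
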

\begin{proof}
First we consider the case when $l\in\mathcal P_C$. Note that in this case $\sqrt{d}\in\Z/l\Z$, and hence the transforms (\ref{eq:transform:1}) and (\ref{eq:transform:2}) are well-defined in $\Z/l\Z$. Since $C$ and $D$ are randomly distributed in $\Z/l\Z$, the pair $(U,V)$ also ranges randomly in the affine space $\A^2(\Z/l\Z)$. To compute the probability, note that the condition $l\nmid I$ is equivalent to both $l\nmid U + \eps V$ and $l\nmid U - \eps V$. Hence $l\nmid I$ if and only if the pair $(U,V)\in\A^2(\Z/l\Z)$ is on neither the line $U + \eps V = 0$ nor the line $U - \eps V = 0$. Since there are $l$ points on each of the lines, and the intersection of the lines is $(0,0)$, there are $l^2-2l+1=(l-1)^2$ points $(U,V)$ in $\A^2(\Z/l\Z)$ such that $l\nmid I$.

Next we want to count the number of points in the subset $\set{(U,V)\in\A^2(\Z/l\Z)|l\nmid p,\;l\nmid I}$. By Lemma \ref{lem:residue} and Equation (\ref{eq:factorization-of-p}) , $p$ factors as
$$\frac{b^2}{64}\left(U-\sqrt{\frac{-2\eps\sqrt{d}}{b}}\right)\left(U+\sqrt{\frac{-2\eps\sqrt{d}}{b}}\right)\left(V-\sqrt{\frac{2\eps'\sqrt{d}}{b}}\right)\left(V+\sqrt{\frac{2\eps'\sqrt{d}}{b}}\right).$$
Thus $p$ is the product of four linear factors. Note that the zeros of $\left(U-\sqrt{\frac{-2\eps\sqrt{d}}{b}}\right)$ and the zeros of $\left(U+\sqrt{\frac{-2\eps\sqrt{d}}{b}}\right)$ are disjoint. Similarly, the zeros of $\left(V-\sqrt{\frac{2\eps'\sqrt{d}}{b}}\right)$ and the zeros of $\left(V+\sqrt{\frac{2\eps'\sqrt{d}}{b}}\right)$ are also disjoint. Also note that the intersection of the zeros of $\left(U\pm\sqrt{\frac{-2\eps\sqrt{d}}{b}}\right)$ and the zeros of $\left(V\pm\sqrt{\frac{2\eps'\sqrt{d}}{b}}\right)$ lies in the zeros of $I$. The following figure depicts the relation among these lines in $\A^2(\Z/l\Z)$.

\begin{center}
\begin{picture}(10,10)
\put(0,1){\line(1,0){10}}
\put(0,9){\line(1,0){10}}
\put(1,0){\line(0,1){10}}
\put(9,0){\line(0,1){10}}
\put(0,0){\line(1,1){10}}
\put(0,10){\line(1,-1){10}}
\put(5,5){\circle*{0.2}}
\put(5.75,4.75){$(0,0)$}
\put(1,9){\circle*{0.2}}
\put(1,1){\circle*{0.2}}
\put(9,1){\circle*{0.2}}
\put(9,9){\circle*{0.2}}
\end{picture}
\end{center}

Since each line contains $l$ points in $\A^2(\Z/l\Z)$, and there are a total of six lines with five points of intersection (4 with multiplicity 3 and 1 with multiplicity 2), we conclude that the subset $\set{(U,V)\in\A^2(\Z/l\Z)|l\nmid p,\;l\nmid I}$ contains $l^2-6l+9=(l-3)^2$ points. Then the probability that $l$ does not divide either $p$ or $I$ is $\left(1-\frac{3}{l}\right)^2$.

If $l\not\in\mathcal P_C$, then by Lemma \ref{lem:divisor-of-p}, $l$ cannot be a divisor of $p$ and the probability that $l$ does not divide either $p$ or $I$ is the same as the probability that $l$ does not divide $I$, which is $\left(1-\frac{1}{l}\right)^2$ if $l\in\mathcal P_S$, and $\left(1-\frac{1}{l^2}\right)$ if $l\in\mathcal P_I$.

If $l\mid d$, note that $p\equiv I^2\mod{l}$. Hence $l$ divides $p$ if and only if $l$ divides $I$. Hence the probability of $l$ dividing neither $p$ or $I$ is the same as the probability of $l$ not dividing $I$, which is $(1-1/l)$.

For the case $l=2$, since $X$ and $Y$ are odd integers, we may write $X=2X_0+1$ and $Y=2Y_0+1$.
It is then not hard to see that both $p$ and $I$ are odd integers. Hence we have $2\nmid I$ and $2\nmid p$, which completes the proof.
\end{proof}

We use the above Proposition to determine the correction factor for $l\nmid b$. For odd primes $l\mid b$, we use the following result.

\begin{prop}
Let $l$ be an odd prime divisor of $b$. Then
\begin{equation}
\Prob(l\nmid p\textrm{ and }l\nmid I) = \left\{
                                          \begin{array}{cl}
                                            (1-3/l)^2, & \hbox{if $l\equiv 1\mod{4}$;} \\
                                            (1-1/l)^2, & \hbox{if $l\equiv 3\mod{4}$.}
                                          \end{array}
                                        \right.
\end{equation}
\end{prop}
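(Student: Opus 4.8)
The plan is to reduce the divisibility conditions on $p$ and $I$ modulo $l$ to a lattice-point count in the affine plane $\A^2(\Z/l\Z)$, exactly as in the case $l\in\mathcal P_C$ treated in the preceding proposition. The first step is to exploit the hypothesis $l\mid b$. Since $d=b^2+c^2$ is square-free and $l$ is an odd prime dividing $b$, I claim $l\nmid c$ and $l\nmid d$: otherwise $l$ would divide both $b$ and $c$, forcing $l^2\mid d$. Consequently $d\equiv c^2\pmod l$ is a nonzero quadratic residue, and moreover $\frac{b}{2}\equiv 0\pmod l$ because $l$ is odd.

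Next I would reduce the formulas \eqref{eq:p} and \eqref{eq:I} modulo $l$. Substituting $\frac{b}{2}\equiv 0$ and $d\equiv c^2$ into \eqref{eq:p} collapses the right-hand side into a product,
$$16p\equiv c^2(C^2+1)(D^2+1)\pmod l,$$
while \eqref{eq:I} becomes
$$4I\equiv c(C-D)(C+D)\pmod l.$$
Because $l\nmid c$ and $l$ is odd, the event $l\mid p$ is equivalent to $C^2\equiv -1$ or $D^2\equiv -1\pmod l$, and $l\mid I$ is equivalent to $C\equiv\pm D\pmod l$. As in the proof of Proposition \ref{thm:2:gen}, the residues $C\bmod l$ and $D\bmod l$ of random odd integers are uniform and independent over $\Z/l\Z$ (since $l$ is odd), so the desired probability equals the proportion of pairs $(C,D)\in\A^2(\Z/l\Z)$ lying on none of these vanishing loci.

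The count then splits according to whether $-1$ is a square modulo $l$, that is, whether $l\equiv 1$ or $3\pmod 4$. If $l\equiv 3\pmod 4$, then $C^2\equiv -1$ is insoluble, so $l\nmid p$ holds automatically and the only constraint is $l\nmid I$; the two lines $C=D$ and $C=-D$ meet only at the origin, leaving $l^2-(2l-1)=(l-1)^2$ good pairs and probability $(1-1/l)^2$. If $l\equiv 1\pmod 4$, fix $i$ with $i^2\equiv -1$; then $l\mid p$ cuts out the four lines $C=\pm i$ and $D=\pm i$, while $l\mid I$ cuts out the two lines $C=\pm D$. I would then verify that these six lines have exactly five intersection points, namely the four corners $(\pm i,\pm i)$, each lying on three of the lines, together with the origin $(0,0)$, lying on the two diagonals; this is precisely the configuration appearing in the preceding proposition for the case $l\in\mathcal P_C$. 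The resulting incidence count gives $6l-(4\cdot 2+1)=6l-9$ points on the union, hence $l^2-6l+9=(l-3)^2$ good pairs and probability $(1-3/l)^2$.

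The substantive step is the second one: recognizing that the quartic defining $p$ factors, once $l\mid b$ is imposed, into the clean form $c^2(C^2+1)(D^2+1)$, so that its zero locus is a union of axis-parallel lines whose corners land on the diagonals cut out by $I$. Once this factorization is in hand, the geometry reproduces the earlier six-line picture verbatim and the enumeration is routine. The only genuine care required is to confirm that the four corner intersections of the $p$-lines actually lie on the $I$-lines (so that they are triple points rather than separate double points), since this is exactly what makes the total overcount equal to $4\cdot 2+1=9$ and produces the factor $(1-3/l)^2$ rather than something larger.
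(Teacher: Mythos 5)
Your proposal is correct and follows essentially the same route as the paper's proof: reduce $p$ and $I$ modulo $l$ using $b\equiv 0$ to get $c^2(C^2+1)(D^2+1)$ and $c(C+D)(C-D)$, then count points of $\A^2(\Z/l\Z)$ off the resulting union of lines, splitting on whether $-1$ is a square modulo $l$. Your count of $6l-9$ points on the union in the case $l\equiv 1\pmod 4$ agrees with the paper's inclusion--exclusion ($4l-4$ plus $2l-1$ minus $4$ corner points), and your preliminary observation that square-freeness of $d$ forces $l\nmid c$ is a detail the paper leaves implicit.
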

\begin{proof}
Note that if $l$ is a divisor of $b$, then
\begin{eqnarray*}
I &\equiv& c\abs{(C+D)(C-D)} \mod{l},\\
p &\equiv& c^2(C^2+1)(D^2+1) \mod{l}.
\end{eqnarray*}

If $l\equiv 1\mod{4}$, then $-1$ is a residue modulo $l$. Let $\sqrt{-1}$ be one of the square root of $-1$ in $\Z/l\Z$. Therefore $l\mid p$ if and only if $C\equiv \pm\sqrt{-1}$ or $D\equiv \pm\sqrt{-1}$. The set $\set{(C,D)\in\A^2(\Z/l\Z)|C\equiv\pm\sqrt{-1}\textrm{ or }D\equiv\pm\sqrt{-1}}$ consists of $4l-4$ points. The set $\set{(C,D)|I\equiv 0} = \set{(C,D)|C+D\equiv 0\textrm{ or }C-D\equiv 0}$ consists of $2l-1$ points, with an intersection with the above set consisting of 4 points. Hence there are $l^2-6l+9 = (l-3)^2$ points in $\A^2(\Z/l\Z)$ such that $l\nmid I$ and $l\nmid p$.

If $l\equiv 3\mod{4}$, then $-1$ is a non-residue modulo $l$. Hence $p$ is not divisible by $l$. Therefore there are $l^2-2l+1 = (l-1)^2$ points in $\A^2(\Z/l\Z)$ such that $l\nmid I$ and $l\nmid p$.
\end{proof}

The following table summarizes the correction factors according to the prime number $l$.

\begin{center}
\begin{tabular}{|l|l|c|}
   \hline
     & prime number $l$ & correction factor $c(l)$ \\
   \hline
   \hline
   $l\nmid b$ & $l$ totally splits in $K/\Q$ & $(1-2/(l-1))^2$ \\
   $l$ odd       & $l$ splits into two primes in $K/\Q$  & 1 \\
              & $l$ is inert in $K/\Q$ & $(1+2/(l-1))$ \\
              & $l$ ramifies  & $(1+1/(l-1))$ \\
   \hline
   $l\mid b$  & $l\equiv 1\mod{4}$ & $(1-2/(l-1))^2$ \\
   $l$ odd    & $l\equiv 3\mod{4}$ & 1 \\
   \hline
   $l=2$      &  & 4 \\
   \hline
 \end{tabular}
\end{center}

Simply put, the probability that both $p$ and $I$ are prime numbers is expected to be equal to $1/\log(p)\log(I)$, which is the probability if both $p$ and $I$ are viewed as random numbers, multiplied by the global correction factor, which is the product of all correction factors corresponding to each prime $l$. We need to note the following lemma that ensures the convergence of the correction factor as an infinite product.

\begin{lem}\label{lem:convergence}
The infinite product
\begin{equation}
\lim_{B\to\infty}\prod_{l\leq B}c(l)
\end{equation}
converges conditionally if $c(3)\neq 0$, and it diverges to zero if $c(3)=0$.
\end{lem}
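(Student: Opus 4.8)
The plan is to pass to logarithms and reduce the behaviour of the product to that of a single prime sum whose dominant divergent part cancels. Throughout I would use that $K/\Q$ is cyclic quartic, so $\Gal(K/\Q)\cong\Z/4\Z$, and that an unramified odd prime $l\nmid b$ is totally split, splits into two primes, or is inert according as $\Frob_l$ has order $1$, $2$, or $4$ in $\Z/4\Z$.

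First I would dispose of the case $c(3)=0$. Inspecting the table, $c(l)$ can vanish only through a factor $(1-2/(l-1))^2$, which forces $l-1=2$, i.e.\ $l=3$; the row $l\mid b,\ l\equiv1\bmod4$ cannot produce $l=3$ since $3\equiv3\bmod4$. Hence $c(3)=0$ occurs exactly when $3\nmid b$ and $3$ splits completely in $K/\Q$, and then the single factor $c(3)$ annihilates every partial product $\prod_{l\le B}c(l)$ with $B\ge3$; in the usual convention for infinite products this is precisely divergence to zero. For every other $l$ the corresponding table entry is strictly positive, so no further zero factors arise.

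Assuming now $c(3)\neq0$, all factors are positive and $\log\prod_{l\le B}c(l)=\sum_{l\le B}\log c(l)$. The primes $l=2$, the odd $l\mid b$, and the ramified $l\mid d$ form a finite set contributing a bounded nonzero product, so they are irrelevant to convergence. For the remaining odd unramified $l\nmid b$ I would Taylor expand $\log c(l)=a_l/l+O(1/l^2)$, where $a_l=-4,\,0,\,2$ according as $l$ is totally split, split into two, or inert. Since $\sum_l O(1/l^2)$ converges absolutely, the whole question reduces to the conditional convergence of $\sum_l a_l/l$, the sum running over odd unramified $l\nmid b$. The crux is a character decomposition on $\Gal(K/\Q)\cong\Z/4\Z$: writing $\psi$ for a faithful (order $4$) character, orthogonality gives
\[
a_l=-\psi(\Frob_l)-2\psi^2(\Frob_l)-\psi^3(\Frob_l),
\]
and the decisive point is that the trivial-character coefficient $\tfrac14\sum_{g}a_g=\tfrac14(-4+2+0+2)=0$ vanishes. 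Equivalently, the split primes (each carrying $-4/l$, density $1/4$) and the inert primes (each carrying $+2/l$, density $1/2$) have their $\log\log B$ growth cancel, which is why the product can converge at all. Thus $\sum_l a_l/l$ is a finite linear combination of the prime sums $\sum_l\chi(\Frob_l)/l$ over the three nontrivial characters $\chi$, each of which I would show converges because the associated abelian $L$-function satisfies $L(1,\chi)\neq0$. Since $\sum_{\text{split}}4/l$ alone diverges, the convergence is genuinely conditional rather than absolute, matching the statement.

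The main obstacle will be exactly this last input. The convergence of $\sum_l\chi(\Frob_l)/l$ for nontrivial $\chi$ is not merely the Abelian limit $\lim_{s\to1^+}\sum_l\chi(\Frob_l)l^{-s}=\log L(1,\chi)$, but convergence of the partial sums taken over primes in increasing order, which is what ``$\prod_{l\le B}$'' demands. Establishing it requires the prime number theorem for $\chi$ (Chebotarev with a power-saving, or at least $O(x/\log^2 x)$, error term) followed by partial summation, and I would cite these classical facts rather than reprove them.
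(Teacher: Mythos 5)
Your proof is correct, and it ultimately rests on the same two analytic facts as the paper's --- cancellation between the totally split and the inert primes, plus convergence of prime sums twisted by nontrivial characters --- but it organizes them quite differently. The paper works multiplicatively: it peels a chain of absolutely convergent factors off $\prod_{l\in\mathcal P_C}(1-2/(l-1))^2\prod_{l\in\mathcal P_I}(1+2/(l-1))$, at one stage extracting $\prod_l(1-\chi(l)/l)$ for the quadratic character attached to $K_0/\Q$, and finally reduces to $\prod_{l\in\mathcal P_C}(1-1/l)\prod_{l\in\mathcal P_S}(1+1/l)$, whose conditional convergence it attributes to the Chebotarev density theorem. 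You instead pass to logarithms at once, write $\log c(l)=a_l/l+O(1/l^2)$ with $a_l=-4,0,2$, and expand the class function $l\mapsto a_l$ over the characters of $\Gal(K/\Q)\cong\Z/4\Z$; the one-line computation $\tfrac14(-4+0+2+2)=0$ isolates the cancellation that the paper achieves through its sequence of ad hoc factorizations. Your route buys two things. First, it makes the structural reason for convergence (vanishing of the trivial-character coefficient, forced by the densities $\tfrac14,\tfrac14,\tfrac12$) explicit rather than emergent from bookkeeping. Second, it is more honest about the final analytic input: you correctly flag that conditional convergence of $\sum_{l\le B}\chi(\Frob_l)/l$ over primes in increasing order requires more than a Dirichlet-density statement --- one needs $L(1,\chi)\neq0$ together with a Mertens-type estimate and partial summation, which is available here since $K/\Q$ is abelian and the $\chi$ are genuine Dirichlet characters by Kronecker--Weber --- whereas the paper's closing appeal to ``the Chebotarev density theorem'' elides exactly this point. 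Your treatment of the degenerate case is also slightly sharper: you note that the row $l\mid b$, $l\equiv 1\bmod 4$ cannot produce a vanishing factor at $l=3$, so $c(3)=0$ occurs precisely when $3\nmid b$ and $3\in\mathcal P_C$, refining the paper's blanket assertion that $c(3)=0$ if and only if $3\in\mathcal P_C$.
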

\begin{proof}
First note that the only correction factor that could be zero is $c(3)$, which is zero if and only if $3\in\mathcal P_C$. Now we assume that $3\not\in\mathcal P_C$ and show that the infinite product converges conditionally. Since the primes dividing $b$ form a finite set, and those primes in $\mathcal P_S$ have no contributions to the correction factor, it suffices to show that the infinite product
\begin{equation*}
\lim_{B\to\infty}\prod_{l\leq B,l\in\mathcal P_C}c(l)\prod_{l\leq B,l\in\mathcal P_I}c(l)
= \lim_{B\to\infty}\prod_{l\leq B,l\in\mathcal P_C}\left(1-\frac{2}{l-1}\right)^2\prod_{l\leq B,l\in\mathcal P_I}\left(1+\frac{2}{l-1}\right)
\end{equation*}
converges conditionally.

First observe that
\begin{eqnarray*}
& & \prod_{l\leq B,l\in\mathcal P_C} \left(1-\frac{2}{l-1}\right)^2 \prod_{l\leq B,l\in\mathcal P_I}\left(1+\frac{2}{l-1}\right) \\
&=& \prod_{l\leq B,l\in\mathcal P_C} \left(1-\frac{2}{l}\right)^2 \prod_{l\leq B,l\in\mathcal P_I}\left(1+\frac{2}{l}\right) \\
& & \cdot\left[\prod_{l\leq B,l\in\mathcal P_C}\left(1-\frac{2}{(l-1)(l-2)}\right)^2\prod_{l\leq B,l\in\mathcal P_I}\left(1+\frac{2}{(l+2)(l-1)}\right)\right].
\end{eqnarray*}
As $\prod_{l\leq B,l\in\mathcal P_C}\left(1-\frac{2}{(l-1)(l-2)}\right)^2\prod_{l\leq B,l\in\mathcal P_I}\left(1+\frac{2}{(l+2)(l-1)}\right)$ converges absolutely as $B\to\infty$, it then suffices to show that $\prod_{l\leq B,l\in\mathcal P_C} \left(1-\frac{2}{l}\right)^2 \prod_{l\leq B,l\in\mathcal P_I}\left(1+\frac{2}{l}\right)$
converges conditionally. Also note that
\begin{eqnarray*}
& & \prod_{l\leq B,l\in\mathcal P_C} \left(1-\frac{2}{l}\right)^2 \prod_{l\leq B,l\in\mathcal P_I}\left(1+\frac{2}{l}\right) \\
&=& \prod_{l\leq B,l\in\mathcal P_C} \left(1-\frac{1}{l}\right)^4 \prod_{l\leq B,l\in\mathcal P_I}\left(1+\frac{1}{l}\right)^2 \cdot \left[\prod_{l\leq B,l\in\mathcal P_C}\left(1-\frac{1}{(l-1)^2}\right)^2\prod_{l\leq B,l\in\mathcal P_I}\left(1-\frac{1}{(l+1)^2}\right)\right].
\end{eqnarray*}
Since $\prod_{l\leq B,l\in\mathcal P_C}\left(1-\frac{1}{(l-1)^2}\right)^2\prod_{l\leq B,l\in\mathcal P_I}\left(1-\frac{1}{(l+1)^2}\right)$ converges absolutely as $B\to\infty$, it then suffices to show that $ \prod_{l\leq B,l\in\mathcal P_C} \left(1-\frac{1}{l}\right)^2 \prod_{l\leq B,l\in\mathcal P_I}\left(1+\frac{1}{l}\right)$ converges conditionally. Note that
\begin{eqnarray*}
& & \prod_{l\leq B,l\in\mathcal P_C} \left(1-\frac{1}{l}\right)^2 \prod_{l\leq B,l\in\mathcal P_I}\left(1+\frac{1}{l}\right) \\
&=& \prod_{l\leq B,l\in\mathcal P_C}\left(1-\frac{1}{l}\right)\prod_{l\leq B,l\in\mathcal P_S}\left(1-\frac{1}{l}\right)^{-1}\cdot \prod_{l\leq z}\left(1-\frac{\chi(l)}{l}\right),
\end{eqnarray*}
where $\chi$ is the unique Dirichlet character modulo $l$ of order two. Since $\prod_{l\leq B}\left(1-\frac{\chi(l)}{l}\right)$ converges as $z\to\infty$, it then suffices to show that $\prod_{l\leq B,l\in\mathcal P_C}\left(1-\frac{1}{l}\right)\prod_{l\leq B,l\in\mathcal P_S}\left(1-\frac{1}{l}\right)^{-1}$ converges conditionally. Observe that
\begin{eqnarray*}
& & \prod_{l\leq B,l\in\mathcal P_C}\left(1-\frac{1}{l}\right)\prod_{l\leq B,l\in\mathcal P_S}\left(1-\frac{1}{l}\right)^{-1} \\
&=& \prod_{l\leq B,l\in\mathcal P_C}\left(1-\frac{1}{l}\right)\prod_{l\leq B,l\in\mathcal P_S}\left(1+\frac{1}{l}\right)\cdot\prod_{l\leq B,l\in\mathcal P_S}\left(1-\frac{1}{l^2}\right)^{-1}.
\end{eqnarray*}
Since $\prod_{l\leq B,l\in\mathcal P_S}\left(1-\frac{1}{l^2}\right)$ converges absolutely as $z\to\infty$, and $\prod_{l\leq B,l\in\mathcal P_C}\left(1-\frac{1}{l}\right)\prod_{l\leq B,l\in\mathcal P_S}\left(1+\frac{1}{l}\right)$ converges conditionally by the Chebotarev density theorem, we conclude that the given infinite product converges conditionally.
\end{proof}

In the above discussion, we did not take into account the case where $I$ divides $p$. However, computations show that $I$ divides $p$ with a very low probability when compared with the estimate error in the estimation by the prime number theorem.

\

\section{Examples}

\subsection{$K=\Q(\zeta_5)$}
First we apply the theory in the previous section to Weil $p$-numbers in the field $K=\Q(\zeta_5)$. Recall that the curve $y^2=x^5+1$ has CM by $K=\Q(\zeta_5)$. The ring of integers of $K$ is $\OK=\Z[\zeta_5]$. The prime $p$ and the index $I$ of a Weil $p$-number in $K$ satisfy the following equations
\begin{eqnarray}
\label{eq:1} p &=& \frac{1}{16}\left((X^2+XY-Y^2)^2 + 5X^2 + 5Y^2 + 5\right), \quad\textrm{and} \\
\label{eq:2} I &=& \frac{1}{4}\abs{X^2-4XY-Y^2},
\end{eqnarray}
where $X$ and $Y$ range among odd integers. Note that in order that both $p$ and $I$ are integers, $X$ and $Y$ must both be odd numbers. The corresponding Weil $p$-number is then
\begin{equation*}
\pi = \frac{1}{4}\left(-(X^2+XY-Y^2) + \sqrt{5} + X\sqrt{-5-2\sqrt{5}} + Y\sqrt{-5+2\sqrt{5}}\right).
\end{equation*}
It is worth noting that $\pi$ is in fact contained in $\OK$. First we note that $\sqrt{-5\pm2\sqrt{5}}\in\Z[\zeta_5]$. This is because $\zeta_5 = \cos(2\pi/5) + i\sin(2\pi/5)$, which yields that $i\sin(2\pi/5) = \frac{\sqrt{-10-2\sqrt{5}}}{4}\in\Q(\zeta_5)$. Hence $\sqrt{-10\pm2\sqrt{5}}\in\Q(\zeta_5)$. Note that $$\sqrt{-5\pm2\sqrt{5}} = \frac{1}{2}\left(\sqrt{-10-2\sqrt{5}}\pm \sqrt{-10+2\sqrt{5}}\right),$$ hence $\sqrt{-5\pm2\sqrt{5}}\in\Q(\zeta_5)$. Moreover, as $\sqrt{-5\pm2\sqrt{5}}$ satisfies the monic polynomial $X^2+10X^2+5$, we deduce that $\sqrt{-5\pm2\sqrt{5}}\in\Z[\zeta_5]$. Obviously, $4\pi\in\Z[\zeta_5]$. Thus it suffices to show that $4\pi$ is divisible by 4 in $\Z[\zeta_5]$. Note that the norm of $4\pi$ is $\left((X^2+XY-Y^2)^2+5+5X^2+5Y^2\right)^2$, we then need to show that 256 divides the norm of $4\pi$, which follows from the fact that $p$ is an integer.

Therefore, for this field $K=\Q(\zeta_5)$ we have $a_0=1$, $b=2$, $c=1$, and $d=5$. The only prime number that divides $b$ is 2. The linear transformation for $K$ is simplified as following, where for this field $\eps=-\frac{\sqrt{5}+1}{2}$. The $K_0$-linear transformation from $K_0[X,Y]\to K_0[U,V]$
\begin{eqnarray}
\label{transform:1} U &=& X + \frac{\sqrt{5}+1}{2} Y \\
\label{transform:2} V &=& X + \frac{1-\sqrt{5}}{2} Y
\end{eqnarray}
is well-defined in $\Z/l\Z$, for odd prime numbers $l\equiv 1,4\mod{5}$. Note that the determinant of this linear transformation is $-\sqrt{5}\neq 0$ as $l\neq 5$. Under this transformation, we have
\begin{eqnarray}
\label{eq:3} p &=& \frac{1}{16}\left(U^2 + \frac{5+\sqrt{5}}{2}\right)\left(V^2+\frac{5-\sqrt{5}}{2}\right), \\
\label{eq:4} I &=& \frac{1}{8}\abs{\frac{1-\sqrt{5}}{2}\,U^2 + \frac{\sqrt{5}+1}{2}\,V^2}.
\end{eqnarray}

For the field $K=\Q(\zeta_5)$, 5 is the only prime that ramifies. A prime $l$ completely splits in $K/\Q$ if and only if $l\equiv 1\mod{5}$. $l$ splits in $K_0/\Q$ if and only if $l\equiv 1,4\mod{5}$, and $l$ is inert in $K/\Q$ if and only if $l\equiv 2,3\mod{5}$. Moreover, there is no odd prime divisor of $b$. Hence we have the following table of correction factors.

\begin{center}
\begin{tabular}{|l|l|c|}
  \hline
   & Primes $l$ & Correction factors $c(l)$ \\
  \hline
  \hline
  $l$ odd & $l\equiv 1\mod{5}$ & $(1-2/(l-1))^2$ \\
   & $l\equiv 4\mod{5}$ & 1 \\
   & $l\equiv 2,3\mod{5}$ & $(1+2/(l-1))^2$ \\
   & $l=5$ & $5/4$ \\
   \hline
  $l=2$ &  & $4$ \\
  \hline
\end{tabular}
\end{center}

The above correction factors are supported by computational results, as shown in the following tables, where the frequency are counted as $X$ and $Y$ range through odd numbers between 3 and 2001. Here for each prime number $l$, the predicted frequency is $c(l)(1-\frac{1}{l})$

For prime numbers $l\equiv 1\mod{5}$, we have the following table.

\begin{center}
\begin{tabular}{|r|r|r|}
  \hline
  Prime $l$ & Actual Frequency & Predicted Frequency \\
  \hline
  \hline
  11 & 0.529075 & 0.528925620... \\
  \hline
  31 & 0.815797 & 0.815816857...\\
  \hline
  41 & 0.859037 & 0.859012493... \\
  \hline
  61 & 0.904074 & 0.904058049... \\
  \hline
  71 & 0.917314 & 0.917278318... \\
  \hline
  101 & 0.941494 & 0.941476326... \\
  \hline
  131 & 0.954744 & 0.954722918... \\
  \hline
\end{tabular}
\end{center}

For prime numbers $l\equiv 4\mod{5}$, we have the following table.

\begin{center}
\begin{tabular}{|r|r|r|}
  \hline
  Prime $l$ & Actual Frequency & Predicted Frequency \\
  \hline
  \hline
  19 & 0.897545 & 0.897506925... \\
  \hline
  29 & 0.932259 & 0.932223543... \\
  \hline
  59 & 0.966391 & 0.966388969... \\
  \hline
  79 & 0.974854 & 0.974843775... \\
  \hline
  89 & 0.977649 & 0.977654336... \\
  \hline
  109 & 0.981733 & 0.981735544... \\
  \hline
  139 & 0.985659 & 0.985663268... \\
  \hline
\end{tabular}
\end{center}

For prime numbers $l\equiv 2,3\mod{5}$, we have the following table.

\begin{center}
\begin{tabular}{|r|r|r|}
  \hline
  Prime $l$ & Actual Frequency & Predicted Frequency \\
  \hline
  \hline
  3 & 0.888444 & 0.888888889... \\
  \hline
  7 & 0.979551 & 0.979591837... \\
  \hline
  13 & 0.994071 & 0.994082840... \\
  \hline
  17 & 0.996519 & 0.996539792... \\
  \hline
  23 & 0.998064 & 0.998109641... \\
  \hline
  37 & 0.999271 & 0.999269540... \\
  \hline
  43 & 0.999471 & 0.999459167... \\
  \hline
  47 & 0.999559 & 0.999547306... \\
  \hline
  53 & 0.999639 & 0.999644001... \\
  \hline
\end{tabular}
\end{center}

For the primes $l=2,5$, the estimates is also confirmed by computation.

The probability that both $p$ and $I$ are prime numbers is predicted by the following equation.
\begin{equation}
\Prob(p\textrm{ and }I\textrm{ primes})=\frac{5}{\log p\log I}\lim_{z\to\infty}\prod_{l\equiv 1(5), l\leq z} \left(1-\frac{2}{l-1}\right)^2\prod_{l\equiv 2,3(5), l\leq z}\left(1+\frac{2}{l-1}\right).
\end{equation}

The convergence is ensured by Lemma \ref{lem:convergence}.

Computation shows that the constant $C = \prod_{l\equiv 1(5)} \left(1-\frac{2}{l-1}\right)^2\prod_{l\equiv 2,3(5)}\left(1+\frac{2}{l-1}\right)\approx 2.292...$. The convergence of this infinite product is shown in the following table. Here we write $C(z) = \prod_{l\equiv 1(5),l\leq z} \left(1-\frac{2}{l-1}\right)^2\prod_{l\equiv 2,3(5),l\leq z}\left(1+\frac{2}{l-1}\right)$.

\begin{center}
\begin{tabular}{|r|c|}
  \hline
  $z$ & $C(z)$ \\
  \hline
  \hline
  100 & 2.24789155326159
 \\
 \hline
  1000 & 2.28832917493766 \\
  \hline
  10000 & 2.28500490081341
 \\
 \hline
  100000 & 2.29169100450671
 \\
 \hline
  1000000 & 2.29206360346098 \\
  \hline
\end{tabular}
\end{center}

The following table summarizes the computational results comparing the predictions with the actual numbers of pairs $(p,I)$ such that both $p$ and $I$ are prime numbers. In the table both $X$ and $Y$ range between $1$ and the bound. The discrepancy is computed as of Prediction/Actual Number $-1$.

\begin{center}
\begin{tabular}{|r|r|r|r|}
  \hline
  Bound & Actual Number & Predicted Number & Discrepancy \\
  \hline
  \hline
  200 & 896 & 918 & 0.02455 \\
  \hline
  400 & 2575 & 2638 & 0.02447 \\
  \hline
  600 & 4833 & 5002 & 0.03497 \\
  \hline
  800 & 7759 & 7940 & 0.02332 \\
  \hline
  1000 & 11316 & 11413 & 0.00857 \\
  \hline
  1200 & 15308 & 15390 & 0.00536 \\
  \hline
\end{tabular}
\end{center}

\begin{rem}
We see from the above table that when the bound is relatively small the discrepancy gets larger. One possible reason might be the slow convergence of $C(z)$. Note that when $z$ is around 100, the value for $C(z)$ is about $2\%$ lower than $\lim_{z\to\infty}C(z)$. If we take the slow convergence of $C(z)$ into consideration and redo the computation with the correction factor $\prod_{l\leq I}c(l)$, we get somewhat better agreement, as shown in the table below.

\begin{center}
\begin{tabular}{|r|r|r|r|}
  \hline
  Bound & Actual Number & Predicted Number & Discrepancy \\
  \hline
  \hline
  200 & 896 & 908 & 0.01339 \\
  \hline
  400 & 2575 & 2624 & 0.01902 \\
  \hline
  600 & 4833 & 4980 & 0.03041 \\
  \hline
  800 & 7759 & 7909 & 0.01933 \\
  \hline
  1000 & 11316 & 11370 & 0.00477 \\
  \hline
  1200 & 15308 & 15335 & 0.00176 \\
  \hline
\end{tabular}
\end{center}

For each prime number $l$, the probability that $l$ does not divide a given number $x\geq l$ is $(1-\frac{1}{l})$. It follows from this intuition that the probability that an integer $x$ is a prime number is
\begin{equation}\label{eq:pnt-estimate-1}
\prod_{l\leq x}\left(1-\frac{1}{l}\right),
\end{equation}
where the product is over all prime numbers $l\leq x$. However, this is not correct. Computations show that the product is less than the probability given by the prime number theorem, which is $1/\log(x)$. The reason is that the divisibilities of $x$ by distinct prime numbers $l<x$ are not in fact independent. We may consider the following example. Consider $x=1000$, then there are respectively $\lfloor\frac{1000}{7}\rfloor=142$, $\lfloor\frac{1000}{11}\rfloor=99$, and $\lfloor\frac{1000}{13}\rfloor=76$ integers less than 1000 divisible by $7$,$11$, and $13$. However the first number that is divisible by all the three primes is 1001. These situations give the discrepancy in the estimates by Equation (\ref{eq:pnt-estimate-1}).

Merten's formulas suggest that
\begin{equation}\label{eq:pnt-estimate-merten}
\frac{1}{\log(x)} \sim \prod_{l<x^{e^{-\gamma}}}\left(1-\frac{1}{l}\right),
\end{equation}
where $\gamma\approx 0.57721$ is the Euler constant. In the estimates above, we would like to multiply the correction factor up to the bound $I^{e^{-\gamma}}$ for correction factors involving $I$ and up to the bound $p^{e^{-\gamma}}$ for correction factors involving $p$. Observe that for almost all the cases we have $p>I$, hence we may rewrite the correction factor as
\begin{equation}
c(l;p,I) = \prod_{2\leq l<I^{e^{-\gamma}}}c(l)\prod_{I^{e^{-\gamma}}<l<p^{e^{-\gamma}}}c_p(l),
\end{equation}
where
\begin{equation}
c(l) = \frac{\Prob(l\nmid I\textrm{ and }l\nmid p)}{(1-1/l)^2}
\end{equation}
is the correction factor as above, and
\begin{equation}
c_p(l) = \frac{\Prob(l\nmid p)}{(1-1/l)}
\end{equation}
is the correction factor for $l$ not dividing $p$ only.
\end{rem}

We note the following result on the probability $\Prob(l\nmid p)$.

\begin{prop}
Let $l$ be an odd prime number other than 5. Then
\begin{equation}
\Prob(l\nmid p) = \left\{
                    \begin{array}{ll}
                      (1-2/l)^2, & \hbox{if $l\equiv 1\mod{5}$;} \\
                      1, & \hbox{otherwise.}
                    \end{array}
                  \right.
\end{equation}
\end{prop}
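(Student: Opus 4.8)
The plan is to reduce to the same two-case analysis used earlier for $\Prob(l\nmid p\textrm{ and }l\nmid I)$, but now keeping track only of the divisibility of $p$. The key structural input is Lemma \ref{lem:divisor-of-p}, which for $K=\Q(\zeta_5)$ says that if $l\mid p$ then $l\in\mathcal P_C\cup\mathcal R$; here $\mathcal R=\set{5}$ is the unique ramified prime, and $\mathcal P_C$ consists exactly of the primes $l\equiv 1\mod 5$.

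First I would dispose of the case $l\not\equiv 1\mod 5$ with $l\neq 5$. Such an $l$ lies neither in $\mathcal P_C$ nor in $\mathcal R$, so by Lemma \ref{lem:divisor-of-p} it can never divide $p$. Hence $\Prob(l\nmid p)=1$, which is the ``otherwise'' branch. For $l\equiv 1\mod 5$ I would work in $\Z/l\Z$, where $\sqrt5$ exists, so that the change of variables (\ref{transform:1})--(\ref{transform:2}) is defined and invertible (its determinant $-\sqrt5$ is nonzero since $l\neq 5$); consequently, as $X,Y$ range over $\Z/l\Z$ the pair $(U,V)$ ranges uniformly over $\A^2(\Z/l\Z)$. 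Reducing the factorization (\ref{eq:3}) modulo $l$, and invoking Lemma \ref{lem:residue} so that $-\tfrac{5+\sqrt5}{2}$ and $-\tfrac{5-\sqrt5}{2}$ are squares modulo $l$, each quadratic factor $U^2+\tfrac{5+\sqrt5}{2}$ and $V^2+\tfrac{5-\sqrt5}{2}$ splits into two linear factors. Thus $l\mid p$ exactly when $U$ equals one of two prescribed residues, or $V$ equals one of two prescribed residues; counting the complementary set gives $(l-2)^2$ admissible pairs, yielding $\Prob(l\nmid p)=\left(\tfrac{l-2}{l}\right)^2=(1-2/l)^2$.

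The only point requiring care---and the ``hard part,'' though it is genuinely routine---is checking that the two quadratics do not degenerate: the four roots must be nonzero and the two roots of each factor distinct, so that each factor really excludes exactly two residue classes. This amounts to verifying $\tfrac{5\pm\sqrt5}{2}\not\equiv 0\mod l$, which fails only when $l\mid 20$, and that is excluded by the hypotheses $l$ odd and $l\neq 5$ (recall also that $l$ odd forces $\pm\alpha$ distinct from one another). With this degeneracy ruled out the counting is immediate, and no deeper obstruction remains, since the splitting of $p$ into linear factors and the quadratic-residue criterion were already established in the general $\mathcal P_C$ computation.
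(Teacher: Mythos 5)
Your argument is correct and follows essentially the same route as the paper: Lemma \ref{lem:divisor-of-p} disposes of the primes $l\not\equiv 1\pmod 5$, and for $l\equiv 1\pmod 5$ one reduces the factorization (\ref{eq:3}) modulo $l$ via Lemma \ref{lem:residue} and counts the complement of four lines in $\A^2(\Z/l\Z)$ (your direct product count $(l-2)^2$ is the same as the paper's $l^2-(4l-4)$). Your explicit non-degeneracy check on the roots is a welcome detail the paper leaves implicit.
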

\begin{proof}
As we have seen in the proof of Proposition \ref{thm:2:gen}, if $l\equiv 1\mod{5}$, then $p$ as a polynomial in $X$ and $Y$ factors into four linear factors. Note that the zeros of each polynomial factor represents a line in $\A^2(\Z/l\Z)$, and we have the figure below showing the zeros of $p$.
\begin{center}
\begin{picture}(10,10)
\put(0,1){\line(1,0){10}}
\put(0,9){\line(1,0){10}}
\put(1,0){\line(0,1){10}}
\put(9,0){\line(0,1){10}}
\put(1,9){\circle*{0.2}}
\put(1,1){\circle*{0.2}}
\put(9,1){\circle*{0.2}}
\put(9,9){\circle*{0.2}}
\end{picture}
\end{center}

Note that each line consists of $l$ points. There are four intersections, each with multiplicity two. Therefore the zero of $p$ consists of $4l-4$ points, which shows that the probability of $l$ not dividing $p$ is $\frac{l^2-4l+4}{l^2}=(1-2/l)^2$.

If $l\equiv 2,3,4\mod{5}$, then by Lemma \ref{lem:divisor-of-p}, $l$ does not divide $p$. Hence the probability of $l$ not dividing $p$ is 1. Thus we finish the proof.
\end{proof}

We shall also note that

\begin{prop} The infinite product
\begin{equation}\label{eq:infinite-prod-p}
\prod_{5<l\leq B} c_p(l)
\end{equation}
converges conditionally as $B\to\infty$.
\end{prop}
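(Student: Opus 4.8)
The plan is to show that the infinite product $\prod_{5<l\leq B} c_p(l)$ converges conditionally as $B\to\infty$ by reducing it, via a sequence of absolutely convergent factorizations, to a product governed by the Chebotarev density theorem, exactly in the spirit of the proof of Lemma \ref{lem:convergence}. By the preceding proposition, $c_p(l)=1$ whenever $l\equiv 2,3,4\mod 5$, so the only nontrivial factors come from primes $l\equiv 1\mod 5$, i.e.\ those in $\mathcal P_C$. For such $l$ we have $\Prob(l\nmid p)=(1-2/l)^2$, and hence
\begin{equation*}
c_p(l)=\frac{(1-2/l)^2}{1-1/l}.
\end{equation*}
Thus the entire product reduces to $\prod_{l\equiv 1(5),\,l\leq B} \frac{(1-2/l)^2}{1-1/l}$, and it suffices to establish conditional convergence of this.

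First I would peel off the absolutely convergent part. Writing $\frac{(1-2/l)^2}{1-1/l}=\bigl(1-\tfrac{1}{l}\bigr)\cdot\frac{(1-2/l)^2}{(1-1/l)^2}$ and noting that $\frac{(1-2/l)^2}{(1-1/l)^2}=\bigl(1-\tfrac{1}{l-1}\bigr)^2$, one checks that $\bigl(1-\tfrac{1}{l-1}\bigr)^2=1-\tfrac{2}{l}+O(1/l^2)$ after comparison with $\bigl(1-\tfrac{1}{l}\bigr)^2$; more precisely $\frac{(1-1/(l-1))^2}{(1-1/l)^2}=1+O(1/l^2)$, so that factor contributes an absolutely convergent subproduct and may be discarded. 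The task then reduces to the conditional convergence of
\begin{equation*}
\prod_{l\equiv 1(5),\,l\leq B}\Bigl(1-\frac{1}{l}\Bigr)^{3}.
\end{equation*}

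The remaining issue is that $\sum_{l\equiv 1(5)} 1/l$ diverges (the primes $l\equiv 1\mod 5$ have positive Dirichlet density $1/4$ among all primes), so $\prod_{l\equiv 1(5)}(1-1/l)^3$ tends to $0$; this is the main obstacle and is what forces the convergence to be only \emph{conditional} rather than absolute, and indeed requires interpreting ``converges conditionally'' as convergence of the full product $\prod_{5<l\leq B}c_p(l)$ ordered by increasing $l$, not as convergence of the subproduct over a single residue class in isolation. To handle this correctly I would not isolate the class $l\equiv 1\mod 5$, but instead combine it with the trivial factors for $l\equiv 2,3,4\mod 5$ and compare against a product over \emph{all} primes. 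Concretely, one writes
\begin{equation*}
\prod_{5<l\leq B}c_p(l)=\prod_{l\equiv 1(5),\,l\leq B}\Bigl(1-\tfrac{1}{l}\Bigr)^{3}\cdot(\text{abs.\ conv.}),
\end{equation*}
and then expresses $\prod_{l\equiv 1(5)}(1-1/l)^3$ in terms of $\prod_{l\leq B}(1-1/l)$ and a Dirichlet $L$-series factor $\prod_{l\leq B}(1-\chi(l)/l)$, with $\chi$ the nontrivial quartic (or quadratic) character modulo $5$, exactly as in the final step of Lemma \ref{lem:convergence}.

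The key computation is an orthogonality-of-characters bookkeeping: using $\sum_{\chi \bmod 5}\chi(l)=4$ for $l\equiv 1\mod 5$ and $=0$ otherwise, one can write the indicator of the class $l\equiv 1\mod 5$ as a linear combination of characters, and thereby rewrite $\prod_{l\equiv 1(5)}(1-1/l)^3$ as a product of powers of $\prod_{l}(1-\chi(l)/l)$ over the characters $\chi$ modulo $5$. For every nontrivial $\chi$, the product $\prod_{l\leq B}(1-\chi(l)/l)$ converges to $1/L(1,\chi)\neq 0,\infty$ by the convergence of the corresponding Dirichlet $L$-series at $s=1$; the principal-character contribution is absorbed, after the absolutely convergent corrections have been stripped off, into the slowly-varying factor $\prod_{l\leq B}(1-1/l)$ whose conditional behaviour is exactly controlled by Mertens' theorem and the prime number theorem. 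I expect the main obstacle to be organizing the character bookkeeping so that each divergent piece is matched against a convergent $L$-function factor, leaving a product that converges in the conditional (Abel/Dirichlet) sense; once the factorization is set up as above this is a direct appeal to Chebotarev (equivalently, Dirichlet's theorem for the modulus $5$) together with the same telescoping of absolutely convergent corrections already used in the proof of Lemma \ref{lem:convergence}.
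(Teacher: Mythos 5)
There is a genuine gap at the very first step, and it propagates through the whole argument. You assert that $c_p(l)=1$ for $l\equiv 2,3,4\pmod 5$, but the paper defines $c_p(l)=\Prob(l\nmid p)/(1-1/l)$, so for those residue classes $\Prob(l\nmid p)=1$ gives $c_p(l)=(1-1/l)^{-1}$, not $1$. This is not a cosmetic slip: the factors $(1-1/l)^{-1}$ over the classes $l\equiv 2,3,4\pmod 5$ contribute a divergent positive amount $\sum_{l\equiv 2,3,4\,(5)}1/l\sim\tfrac34\log\log B$ to the logarithm of the product, and this is precisely what cancels the divergent negative contribution $-\sum_{l\equiv 1\,(5)}3/l\sim-\tfrac34\log\log B$ coming from the class $l\equiv 1\pmod 5$. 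With your values, the product reduces (up to an absolutely convergent factor) to $\prod_{l\equiv 1(5)}(1-1/l)^3\asymp(\log B)^{-3/4}$, which diverges to zero --- and the paper's Lemma \ref{lem:convergence} explicitly treats ``diverges to zero'' as the failure mode, not as conditional convergence. You in fact notice this tension (``the remaining issue is that $\sum_{l\equiv 1(5)}1/l$ diverges''), but the proposed remedy --- ``combine it with the trivial factors for $l\equiv 2,3,4\bmod 5$'' --- cannot work if those factors really were trivial; and the subsequent claim that the principal-character piece is ``absorbed into $\prod_{l\leq B}(1-1/l)$ whose conditional behaviour is controlled by Mertens' theorem'' does not establish convergence either, since $\prod_{l\leq B}(1-1/l)\sim e^{-\gamma}/\log B\to 0$.

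The paper's proof is short precisely because it keeps the correct $c_p(l)$ for all classes: it writes $\prod_{5<l\leq B}c_p(l)=\prod_{l\equiv 1(5)}(1-2/l)^2\cdot\prod_{\text{all }l}(1-1/l)^{-1}$, takes logarithms, and observes that $-\sum_{l\equiv 1(5)}4/l+\sum_{l}1/l$ converges conditionally because the primes $l\equiv 1\pmod 5$ have Dirichlet density $1/4$. Once you correct the value of $c_p(l)$ on the classes $l\equiv 2,3,4\pmod 5$, your character-orthogonality bookkeeping would indeed recover exactly this cancellation, so the remainder of your outline is salvageable; but as written the reduction analyzes the wrong product and reaches a quantity that does not converge.
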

\begin{proof}
Note that by the above Proposition we have
\begin{equation*}
\prod_{5<l\leq B} c_p(l) = \prod_{5<l\leq B,l\equiv 1\pmod{5}}\left(1-\frac{2}{l}\right)^2  \prod_{5<l\leq B} \left(1-\frac{1}{l}\right)^{-1}.
\end{equation*}
Hence
\begin{eqnarray*}
\log \prod_{5<l\leq B} c_p(l) &=& \sum_{5<l\leq B, l\equiv 1\pmod{5}} 2\log\left(1-\frac{2}{l}\right) - \sum_{5<l\leq B} \log\left(1-\frac{1}{l}\right) \\
&=& \sum_{5<l\leq B, l\equiv 1\pmod{5}} 2\left(-\frac{2}{l}+\frac{4}{l^2}+\cdots\right) - \sum_{5<l\leq B} \left(-\frac{1}{l}+\frac{1}{l^2}+\cdots\right) \\
&=& \sum_{5<l\leq B, l\equiv 1\pmod{5}} -\frac{4}{l} + \sum_{5<l\leq B} \frac{1}{l} + \textrm{an absolutely convergent series}.
\end{eqnarray*}
Further note that
\begin{equation*}
\sum_{5<l\leq B, l\equiv 1\mod{5}} -\frac{4}{l} + \sum_{5<l\leq B} \frac{1}{l}
\end{equation*}
converges conditionally by Dirichlet's Density Theorem, which shows that the infinite product (\ref{eq:infinite-prod-p}) converges conditionally.
\end{proof}

\subsection{The field $K=\Q(\sqrt{-29-2\sqrt{29}})$}

We now consider the field $K=\Q(\sqrt{-29-2\sqrt{29}})$ as an example. Note that the real subfield $K_0=\Q(\sqrt{29})$, and $\disc(K)=29^3$. Hence the only prime number that ramifies in $K$ is $29$, which ramifies totally. By the Kronecker-Weber Theorem, $K$ is the only quartic subfield of $\Q(\zeta_{29})$. Hence the factorization of primes in $K/\Q$ depends only on their residue classes modulo 29. Here is a table showing the splitting of prime numbers according to their residue classes.

\begin{center}
\begin{tabular}{|l|l|}
  \hline
  Factorization & Residue classes modulo 29 \\
  \hline
  \hline
  Totally Split in $K/Q$ & 1,7,16,20,23,24,25 \\
  \hline
  Split in $K_0/Q$ & 4,5,6,9,13,22,28 \\
  \hline
  Inert in $K/Q$ & 2,3,8,10,11,12,14,15,17,18,19,21,26,27 \\
  \hline
  Ramifies in $K/Q$ & 0 \\
  \hline
\end{tabular}
\end{center}

The correction factor for this field is

\begin{equation}
4\left(1+\frac{1}{29-1}\right)\prod_{l\in\mathcal P_C}\left(1-\frac{2}{l-1}\right)^2\prod_{l\in\mathcal P_I}\left(1+\frac{2}{l-1}\right)\approx 5.191
\end{equation}

The following table shows the discrepancies of the actual and the predicted numbers.

\begin{center}
\begin{tabular}{|r|r|r|r|}
  \hline
  Bound & Actual Number & Predicted Number & Discrepancy \\
  \hline
  \hline
  200 & 337 & 330 & $-0.02121$ \\
  \hline
  400 & 1028 & 987 & $-0.04154$ \\
  \hline
  600 & 1931 & 1904 & $-0.01418$ \\
  \hline
  800 & 3107 & 3054 & $-0.01735$ \\
  \hline
  1000 & 4491 & 4421 & $-0.01583$ \\
  \hline
  1200 & 6152 & 5995 & $-0.02618$ \\
  \hline
\end{tabular}
\end{center}

\subsection{The field $K=\Q(\sqrt{-37-6\sqrt{37}})$}

We now consider the field $K=\Q(\sqrt{-37-6\sqrt{37}})$ as an example. Note that the real subfield $K_0=\Q(\sqrt{37})$, and $\disc(K)=37^3$. Hence the only prime number that ramifies in $K$ is $37$, which ramifies totally. By Kronecker-Weber Theorem, $K$ is the only quartic subfield of $\Q(\zeta_{37})$. Hence the factorization of primes in $K/\Q$ depends only on their residue classes modulo 37. Here is a table showing the splitting of prime numbers according to their residue classes.

\begin{center}
\begin{tabular}{|l|l|}
  \hline
  Factorization & Residue classes modulo 37 \\
  \hline
  \hline
  Totally Split in $K/Q$ & 1,7,9,10,12,16,26,33,34 \\
  \hline
  Split in $K_0/Q$ & 3,4,11,21,25,27,28,30,36 \\
  \hline
  Inert in $K/Q$ & 2,5,6,8,13,14,15,17,18, \\
   & 19,20,22,23,24,29,31,32,35 \\
  \hline
  Ramifies in $K/Q$ & 0 \\
  \hline
\end{tabular}
\end{center}

For this field, $b=6$, hence the prime 3 divides $b$, and it follows that the correction factor for 3 is 1.  The correction factor for this field is

\begin{equation}
4\left(1+\frac{1}{37-1}\right)\prod_{l\in\mathcal P_C}\left(1-\frac{2}{l-1}\right)^2\prod_{l\in\mathcal P_I}\left(1+\frac{2}{l-1}\right)\approx 4.299
\end{equation}

The following table shows the discrepancies of the actual and the predicted numbers.

\begin{center}
\begin{tabular}{|r|r|r|r|}
  \hline
  Bound & Actual Number & Predicted Number & Discrepancy \\
  \hline
  \hline
  200 & 258 & 266 & 0.03101 \\
  \hline
  400 & 785 & 801 & 0.02038 \\
  \hline
  600 & 1559 & 1547 & $-0.00769$ \\
  \hline
  800 & 2457 & 2485 & 0.01140 \\
  \hline
  1000 & 3584 & 3600 & 0.00446 \\
  \hline
\end{tabular}
\end{center}

\

\section{Acknowledgement}

I owe my deepest gratitude to my supervisor, Professor Neal Koblitz, whose guidance and support from the initial to the final level enabled me to develop an understanding of the subject, whilst allowing me room to work out problems in my own way. I also  offer my thanks to Professor William Stein for his helpful discussions on the open source math software SAGE.

\

\

\bibliographystyle{amsplain}

\end{document}